\documentclass[a4paper,12pt]{article}
\usepackage{ucs}
\usepackage{amsfonts, amssymb, amsmath, amsthm, amscd}
\usepackage{graphicx}
\usepackage{cite}
\textheight=220mm \textwidth=150mm \topmargin=-0.5in
\oddsidemargin=0in \evensidemargin=0in

\ifx\undefined \pdfgentounicode \else
\input{glyphtounicode} \pdfgentounicode=1
\fi

\author{A. A. Vasil'eva}
\title{Estimates for the Kolmogorov widths of an intersection of two balls in a mixed norm}
\date{}
\begin{document}

\maketitle

\newenvironment{Biblio}{%
                  \renewcommand{\refname}{\footnotesize REFERENCES}%
                  }

\def\inff{\mathop{\smash\inf\vphantom\sup}}
\renewcommand{\le}{\leqslant}
\renewcommand{\ge}{\geqslant}
\newcommand{\sgn}{\mathrm {sgn}\,}
\newcommand{\inter}{\mathrm {int}\,}
\newcommand{\dist}{\mathrm {dist}}
\newcommand{\supp}{\mathrm {supp}\,}
\newcommand{\R}{\mathbb{R}}
\newcommand{\Z}{\mathbb{Z}}
\newcommand{\N}{\mathbb{N}}
\newcommand{\Q}{\mathbb{Q}}
\theoremstyle{plain}
\newtheorem{Trm}{Theorem}
\newtheorem{trma}{Theorem}

\newtheorem{Def}{Definition}
\newtheorem{Cor}{Corollary}
\newtheorem{Lem}{Lemma}
\newtheorem{Rem}{Remark}
\newtheorem{Sta}{Proposition}

\newtheorem{Not}{Notation}
\newtheorem{Exa}{Example}
\renewcommand{\proofname}{\bf Proof}
\renewcommand{\thetrma}{\Alph{trma}}

\begin{abstract}
Order estimates for the Kolmogorov widths of an intersection of two finite‐dimensional balls in a mixed norm under some conditions on the parameters are obtained.
\end{abstract}

\section{Introduction}

In this paper, a problem of order estimates for the Kolmogorov widths of an intersection of two finite‐dimensional balls in a mixed norm is studied.

First we give necessary definitions and notation.

Let $m, \, k\in \N$, $1\le p<\infty$, $1\le \theta<\infty$. By $l_{p,\theta}^{m,k}$ we denote the space $\R^{mk}$ with the norm
$$
\|(x_{i,j})_{1\le i\le m, \, 1\le j\le k}\|_{l_{p,\theta}^{m,k}} = \left(\sum \limits _{j=1}^k\left(\sum \limits _{i=1}^m |x_{i,j}|^p\right)^{\theta/p}\right)^{1/\theta}.
$$
For $p=\infty$ or $\theta=\infty$, the definition  is modified naturally.

By $B_{p,\theta}^{m,k}$ we denote the unit ball of the space $l_{p,\theta}^{m,k}$. If $k=1$, we write $l_p^m:=l_{p,\theta}^{m,1}$ and $B_p^m:=B_{p,\theta}^{m,1}$.

Let $X$ be a normed space, and let $M\subset X$, $n\in
\Z_+$. The Kolmogorov $n$-widts of the set $M$ in the space $X$ is defined as follows:
$$
d_n(M, \, X) = \inf _{L\in {\cal L}_n(X)} \sup _{x\in M} \inf
_{y\in L} \|x-y\|;
$$
here ${\cal L}_n(X)$ is the family of all subspaces in $X$
with dimension at most $n$. For details, see, e.g., \cite{itogi_nt, kniga_pinkusa, nvtp}.

Exact values of the widths $d_n(B_p^m, \, l_q^m)$ were obtained in \cite{pietsch1, stesin} (for $p\ge q$) and in  \cite{k_p_s, stech_poper} (for $p=1$, $q=2$). For $p\le q<\infty$, order estimates were obtained in \cite{gluskin1, bib_gluskin}. The problem of estimating the widths $d_n(B_p^m, \, l_\infty^m)$ was studied in \cite{kashin_oct, bib_kashin, garn_glus}; for $p\ge 2$, the order estimates were obtained; for $1\le p<2$, the values are known up to a factor, which is a degree of $\log\left(\frac{em}{n}\right)$.

Approximative properties of the balls $B^{m,k}_{p,\theta}$ in $l^{m,k}_{q,\sigma}$ are interesting in relation to  
Besov classes with dominating mixed smoothness \cite{galeev4, dir_ull, vyb_06} and weighted Besov classes \cite{vas_besov}.
In \cite{galeev2, galeev5, izaak1, izaak2, mal_rjut, vas_besov, dir_ull}, the problem of estimating the Kolmogorov widths $d_n(B^{m,k}_{p,\theta}, \, l^{m,k}_{q,\sigma})$ for $n \le \frac{mk}{2}$ was studied (more precisely, in \cite{dir_ull} the Gelfand widths were considered; for $p$, $\theta$, $q$, $\sigma \ge 1$, the problem can be formulated in terms of the Kolmogorov widths \cite{ioffe_tikh}). The order estimates were obtained for the following parameters:
\begin{enumerate}
\item E. M. Galeev \cite{galeev2}: $p=1$, $\theta=\infty$, $q=2$, $1<\sigma <\infty$;
\item E. M. Galeev \cite{galeev5}: $p=1$ or $p=\infty$; $\theta=\infty$; here one of the following conditions holds: a) $q=2$, $1<\sigma \le \infty$ or b) $1<q\le \min \{2, \, \sigma\}$;
\item A. D. Izaak \cite{izaak2}: $p=\theta$, $q=2$, $\sigma=1$, where $p=1$ or $2\le p\le \infty$;
\item in \cite{vas_besov} the case $2\le q<\infty$, $2\le \sigma <\infty$, $1\le p\le q$, $1\le \theta \le \sigma$, $n\le a(q, \, \sigma)mk$ was considered (here $a(q, \, \sigma)$ is a positive number);
\item Yu. V. Malykhin, K. S. Rjutin \cite{mal_rjut}: $p=1$, $\theta=\infty$, $q=2$, $\sigma =1$ (earlier in \cite{izaak1} the estimates were obtained up to a logarithmic factor), as well as $p\le q\le 2$, $\theta \ge \sigma$;
\item S. Dirksen, T. Ullrich \cite{dir_ull}: a) $p=q=2$, $\theta\ge 2$, $\sigma=\infty$; b) $p=\theta=\sigma\ge 2$, $q=\infty$.
\end{enumerate}

In addition, E. M. Galeev \cite{galeev6} obtained the lower estimate of the Kolmogorov widths for $1\le p\le \infty$, $\theta=\infty$, $2\le q<\infty$, $\sigma=q$, $n\le c(q)mk$ (here $c(q)$ is a positive number).

The problem of estimating the Kolmogorov widths of an intersection of a family of Sobolev or Besov classes \cite{galeev1, galeev2, galeev4} can be reduced by the discretization method to estimating the widths of $d_n(\cap _{\alpha \in A} \nu_\alpha B^m_{p_\alpha}, \, l_q^m)$. E. M. Galeev \cite{galeev1} obtained the order estimates of these values for $n= \frac{m}{2}$; in \cite{vas_ball_inters} this result was generalized to $n \le \frac{m}{2}$.

Naturally arises the question of estimating the Kolmogorov widths of an intersection of finite‐dimension balls in a mixed norm. The result can be employed in estimating the widths of an intersection of weighted Besov classes or Besov classes with dominating mixed smoothness. Here we consider the case of two balls $\nu_i B^{m,k}_{p_i,\theta_i}$, $i=1, \, 2$, where $2\le q<\infty$, $2\le \sigma <\infty$, $1\le p_i\le q$, $1\le \theta_i\le \sigma$, $i=1, \, 2$. It turns out that for these parameters the problem can be reduced to estimating the widths of one ball in a mixed norm; the order estimates for such widths are already obtained in \cite{vas_besov} (see Theorem \ref{theorem_a} below).

Given sets $X$, $Y$ and functions $f_1$, $f_2:\ X\times Y\rightarrow \mathbb{R}_+$, we write
$f_1(x, \, y)\underset{y}{\lesssim} f_2(x, \, y)$ (or
$f_2(x, \, y)\underset{y}{\gtrsim} f_1(x, \, y)$) if for each
$y\in Y$ there exists $c(y)>0$ such that $f_1(x, \, y)\le
c(y)f_2(x, \, y)$ for all $x\in X$; $f_1(x, \,
y)\underset{y}{\asymp} f_2(x, \, y)$ if $f_1(x, \, y)
\underset{y}{\lesssim} f_2(x, \, y)$ and $f_2(x, \,
y)\underset{y}{\lesssim} f_1(x, \, y)$.

Let $q>2$, $1\le p\le q$. We set $\lambda_{p,q} = \min \left\{\frac{1/p‐1/q}{1/2‐1/q}, \, 1\right\}$. For $q=2$, $1\le p\le 2$, we set $\lambda_{p,2}=1$.

\begin{trma}
\label{theorem_a}
(see \cite{vas_besov}). Let $m$, $k\in \N$, $n\in \Z_+$, $n\le \frac{mk}{2}$, $2\le q<\infty$, $2\le \sigma <\infty$, $1\le p\le q$, $1\le \theta \le \sigma$. Then
\begin{itemize}
\item if $\max\{p, \, \theta\}\le 2$, then
\begin{align}
\label{dn_1} d_n(B^{m,k}_{p,\theta}, \, l_{q,\sigma}^{m,k}) \underset{q,\sigma}{\asymp} \min\{1, \, n^{‐\frac 12}m^{\frac 1q} k^{\frac{1}{\sigma}}\};
\end{align}
\item if $\max\{p, \, \theta\}\ge 2$, $\lambda_{p,q}\le \lambda _{\theta,\sigma}$, then
\begin{align}
\label{dn_2}
d_n(B^{m,k}_{p,\theta}, \, l_{q,\sigma}^{m,k}) \underset{q,\sigma}{\asymp} \begin{cases} 1, & n\le m^{\frac 2q}k^{\frac{2}{\sigma}}, \\ \left(n^{‐\frac 12}m^{\frac 1q}k^{\frac{1}{\sigma}}\right)^{\lambda_{p,q}}, & m^{\frac 2q}k^{\frac{2}{\sigma}}\le n \le mk^{\frac{2}{\sigma}},\\ m^{\frac 1q‐\frac 1p}(n^{‐\frac 12}m^{\frac 12}k^{\frac{1}{\sigma}})^{\lambda_{\theta,\sigma}},  & mk^{\frac{2}{\sigma}} \le n \le \frac{mk}{2};\end{cases} 
\end{align}
\item if $\max\{p, \, \theta\}\ge 2$, $\lambda_{p,q}\ge \lambda _{\theta,\sigma}$, then
\begin{align}
\label{dn_3}
d_n(B^{m,k}_{p,\theta}, \, l_{q,\sigma}^{m,k}) \underset{q,\sigma}{\asymp} \begin{cases} 1, & n\le m^{\frac 2q}k^{\frac{2}{\sigma}}, \\ \left(n^{‐\frac 12}m^{\frac 1q}k^{\frac{1}{\sigma}}\right)^{\lambda_{\theta,\sigma}}, & m^{\frac 2q}k^{\frac{2}{\sigma}}\le n \le km^{\frac{2}{q}},\\ k^{\frac{1}{\sigma}‐\frac{1}{\theta}}(n^{‐\frac 12}k^{\frac 12}m^{\frac{1}{q}}) ^{\lambda_{p,q}},  & km^{\frac{2}{q}} \le n \le \frac{mk}{2}.\end{cases} 
\end{align}
\end{itemize}
\end{trma}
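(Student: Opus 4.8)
The plan is to establish the matching upper and lower bounds separately, taking as building blocks the classical one-block estimates for $d_r(B^m_p,\,l^m_q)$ and the analogous outer estimates for $d_s(B^k_\theta,\,l^k_\sigma)$ (the $k=1$ specializations of the statement, of Gluskin and Kashin type). The organizing idea is that the mixed norm carries two independent scales --- an inner $l_p\to l_q$ approximation over the $m$ coordinates of each block and an outer $l_\theta\to l_\sigma$ approximation over the $k$ blocks --- and that an $n$-dimensional subspace must split its budget between them. The three branches of \eqref{dn_2} and \eqref{dn_3} correspond to the three ways this split can be binding, and the dichotomy $\lambda_{p,q}\le\lambda_{\theta,\sigma}$ versus $\lambda_{p,q}\ge\lambda_{\theta,\sigma}$ records whether the inner or the outer scale is the first to saturate, which is exactly what interchanges the intermediate breakpoints $mk^{2/\sigma}$ and $km^{2/q}$.

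For the upper bound I would use a two-level construction. At the inner level, inside each block I take a subspace $L_j\subset\R^m$ of dimension $r_j$ realizing $d_{r_j}(B^m_p,\,l^m_q)$, so that the residual of $x_{\cdot,j}$ is at most $\|x_{\cdot,j}\|_p\,d_{r_j}(B^m_p,l^m_q)$; at the outer level I additionally approximate the vector of block-residuals in $l^k_\sigma$ using a subspace realizing $d_s(B^k_\theta,\,l^k_\sigma)$. The point of the outer level is that, under the constraint $\big\|(\|x_{\cdot,j}\|_p)_j\big\|_{l^k_\theta}\le1$, a purely block-diagonal scheme would be defeated by an adversary concentrating all mass on a single block and would only give the factor $k^{1/2}$; the outer approximation upgrades this to the correct factor $k^{1/\sigma}$. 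Minimizing over the allocation $n\approx\sum_j r_j+s$ then yields the three branches: for $n\le m^{2/q}k^{2/\sigma}$ only the trivial embedding $B^{m,k}_{p,\theta}\subset B^{m,k}_{q,\sigma}$ is available, giving width $\lesssim1$; in the intermediate range the inner level is active and the decay carries the exponent $\lambda_{p,q}$; for large $n$ the inner widths have already fallen to their Euclidean floor, so the residual decay is the outer one with exponent $\lambda_{\theta,\sigma}$ multiplied by the inner norm-ratio $m^{1/q-1/p}$.

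For the lower bound I would exhibit, in each regime, a hard sub-configuration. In the constant regime $n\le m^{2/q}k^{2/\sigma}$ I would use a Kashin-type Euclidean section: a subspace of dimension exceeding $2n$ on which $l^{m,k}_{q,\sigma}$ is proportional to the Euclidean norm and on which $B^{m,k}_{p,\theta}$ contains a fixed multiple of the Euclidean ball, so that no $n$-dimensional subspace can approximate it below a constant --- the section dimension $m^{2/q}k^{2/\sigma}$ being the product of the inner and outer Euclidean-section dimensions. In the inner-dominated branch I would restrict to vectors supported on a single block, reducing to the one-block lower bound for $d_r(B^m_p,\,l^m_q)$; in the outer-dominated branch I would freeze a near-extremal inner profile in every block and run a Gluskin-type volume (or dual/Gelfand-width) argument across the $k$ blocks against the $l^k_\theta\to l^k_\sigma$ geometry. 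The comparison $\lambda_{p,q}\lessgtr\lambda_{\theta,\sigma}$ selects which of these configurations is binding and hence fixes the order of the transition points.

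The main obstacle is the lower bound in the genuinely mixed middle regime, where neither a pure one-block reduction nor a pure across-block volume estimate is enough: one must run a volume/duality argument on a subspace that simultaneously feels the $l_p$-geometry inside the blocks and the $l_\theta$-geometry across them, and then verify that the resulting exponent is precisely $\lambda_{p,q}$ or $\lambda_{\theta,\sigma}$ and that the three branches glue continuously at the breakpoints. Checking sharpness at these breakpoints, together with the saturation cases $\lambda_{p,q}=1$ or $\lambda_{\theta,\sigma}=1$ (small $p$ or $\theta$) and the endpoints $p=q$, $\theta=\sigma$, accounts for most of the remaining bookkeeping; the complete argument is carried out in \cite{vas_besov}.
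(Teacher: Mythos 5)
Your proposal has a genuine gap, and it sits exactly where this paper's own work lies. The statement asserts the order equalities for all $n\le \frac{mk}{2}$, but the reference \cite{vas_besov}, to which you defer ``the complete argument,'' proves them only for $n\le a(q,\sigma)mk$, where $a(q,\sigma)>0$ may be far smaller than $\frac12$; the paper says this explicitly right after the theorem. The upper estimate from \cite{vas_besov} does hold for all $n\le mk$, so the hole is in the lower estimate on the range $a(q,\sigma)mk\le n\le \frac{mk}{2}$ --- and closing that hole is precisely the content of \S 2 of this paper (Proposition \ref{v_dn} and Corollary \ref{cor1}). Your sketch never notices the restricted range of the cited result, so as written it establishes a weaker statement than the one asserted.

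The tool needed to close the gap is the one you gesture at (``Gluskin-type volume argument across the blocks'') but do not execute. The paper's Proposition \ref{v_dn} extends the lower bound for $d_n(V^{m,k}_{r,l},\,l^{m,k}_{q,\sigma})$ to all $n\le\frac{mk}{2}$ in three steps: for small $n$ it cites \cite{vas_besov}; for the intermediate range it uses an interpolation trick in the exponents, choosing $\tilde q\in[2,q]$, $\tilde\sigma\in[2,\sigma]$ with $n=a(q,\sigma)m^{2/\tilde q}k^{2/\tilde\sigma}r^{1-2/\tilde q}l^{1-2/\tilde\sigma}$ and exploiting the monotonicity of $a(\cdot,\cdot)$ and continuity of $b(\cdot,\cdot)$; and for $a(q,\sigma)mk\le n\le\frac{mk}{2}$ it runs Gluskin's averaging over the group $G$ of sign changes and permutations to get $d_n(V^{m,k}_{r,l},\,l^{m,k}_{2,2})\gtrsim r^{1/2}l^{1/2}$ whenever $n\le\frac{mk}{2}$, and then passes to $l^{m,k}_{q,\sigma}$ by the norm comparison $\|\cdot\|_{l^{m,k}_{q,\sigma}}\ge m^{\frac 1q-\frac 12}k^{\frac{1}{\sigma}-\frac 12}\|\cdot\|_{l^{m,k}_{2,2}}$. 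Corollary \ref{cor1} then transfers this to the balls via the inclusions of scaled sets $V^{m,k}_{m,k}$, $V^{m,k}_{m,1}$, $V^{m,k}_{1,k}$, $V^{m,k}_{1,1}$ into $B^{m,k}_{p,\theta}$, covering all four sign patterns of $(p-2,\theta-2)$. To repair your proof you must either carry out this tail-range argument yourself (your volume-argument paragraph is the right instinct, but it is a plan, not a proof) or cite \S 2 of the present paper rather than \cite{vas_besov} for $a(q,\sigma)mk\le n\le\frac{mk}{2}$. A secondary point you would also need to track: the implied constants in the statement depend only on $q,\sigma$, whereas \cite{vas_besov} states them with dependence on $p,\theta$ as well; uniformity in $p\in[1,q]$, $\theta\in[1,\sigma]$ is needed because the main theorem of this paper applies Theorem \ref{theorem_a} to interpolated exponents that vary with $\nu_1/\nu_2$.
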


In \cite{vas_besov} this theorem was proved for $n \le a(q, \, \sigma)mk$; in addition, in the statement, the constants in order equalities depend on $p$, $\theta$, $q$, $\sigma$, but the proof shows that they are independnet of $p$ and $\theta$. The upper estimate holds for all $n\le mk$. For $a(q, \, \sigma)mk \le n \le \frac{mk}{2}$ the lower estimate will be proved in \S 2 (see Corollary \ref{cor1}).

Notice that if $2\le p\le q$, $2\le \theta\le \sigma$, $\lambda_{p,q} = \lambda_{\theta,\sigma}$, then
\begin{align}
\label{1234} \begin{array}{c} \left(n^{‐\frac 12}m^{\frac 1q}k^{\frac{1}{\sigma}}\right)^{\lambda_{p,q}} = m^{\frac 1q‐\frac 1p}(n^{‐\frac 12}m^{\frac 12}k^{\frac{1}{\sigma}})^{\lambda_{\theta,\sigma}} \\ = \left(n^{‐\frac 12}m^{\frac 1q}k^{\frac{1}{\sigma}}\right)^{\lambda_{\theta,\sigma}} = k^{\frac{1}{\sigma}‐\frac{1}{\theta}}(n^{‐\frac 12}k^{\frac 12}m^{\frac{1}{q}}) ^{\lambda_{p,q}}. \end{array}
\end{align}

Now we formulate the main result of the article.
\begin{Trm}
\label{main}
Let $m$, $k\in \N$, $n\in \Z_+$, $n\le \frac{mk}{2}$, $2\le q<\infty$, $2\le \sigma <\infty$, $1\le p_i\le q$, $1\le \theta_i\le \sigma$, $\nu_i>0$, $i=1, \, 2$. We define the values $\Phi_j(m, \, k, \, n) = \Phi_j(m, \, k, \, n; \, p_1, \, p_2, \, \theta_1, \, \theta_2, \, q, \, \sigma, \, \nu_1, \, \nu_2)$ ($j=1, \, \dots, \, 5$) as follows:
\begin{enumerate}
\item $\Phi_j(m, \, k, \, n) = \nu_j d_n(B^{m,k}_{p_j,\theta_j}, \, l^{m,k}_{q,\sigma})$ for $j=1, \, 2$;
\item if there exists $\tilde \lambda \in [0, \, 1]$ such that $\frac 12 = \frac{1‐\tilde \lambda}{p_1} + \frac{\tilde \lambda}{p_2}$, we define the number $\tilde \theta$ by $\frac{1}{\tilde \theta} = \frac{1‐\tilde \lambda}{\theta_1} + \frac{\tilde \lambda}{\theta_2}$ and set $\Phi_3(m, \, k, \, n) = \nu_1^{1‐\tilde \lambda}\nu_2^{\tilde \lambda} d_n(B^{m,k}_{2,\tilde\theta}, \, l^{m,k}_{q,\sigma})$; otherwise, we set $\Phi_3(m, \, k, \, n)=+\infty$;
\item if there exists $\tilde \mu \in [0, \, 1]$ such that $\frac 12 = \frac{1‐\tilde \mu}{\theta_1} + \frac{\tilde \mu}{\theta_2}$, we define the number $\tilde p$ by $\frac{1}{\tilde p} = \frac{1‐\tilde \mu}{p_1} + \frac{\tilde \mu}{p_2}$ and set $\Phi_4(m, \, k, \, n) = \nu_1^{1‐\tilde \mu}\nu_2^{\tilde \mu} d_n(B^{m,k}_{\tilde p,2}, \, l^{m,k}_{q,\sigma})$; otherwise, we set $\Phi_4(m, \, k, \, n)=+\infty$;
\item if there exist $\lambda \in [0, \, 1]$, $p\in [2, \, q]$, $\theta\in [2, \, \sigma]$ such that $\frac 1p = \frac{1‐\lambda}{p_1} + \frac{\lambda}{p_2}$, $\frac{1}{\theta} = \frac{1‐\lambda}{\theta_1} + \frac{\lambda}{\theta_2}$ and $\lambda_{p,q}=\lambda_{\theta,\sigma}$, we set $\Phi_5(m, \, k, \, n) = \nu_1^{1‐\lambda}\nu_2^{\lambda} d_n(B^{m,k}_{p,\theta}, \, l^{m,k}_{q,\sigma})$; otherwise, we set $\Phi_5(m, \, k, \, n)=+\infty$.
\end{enumerate}
Then
$$
d_n(\nu_1B^{m,k}_{p_1,\theta_1} \cap \nu_2 B^{m,k}_{p_2,\theta_2}, \, l^{m,k}_{q,\sigma}) \underset{q,\sigma}{\asymp} \min _{1\le j\le 5} \Phi_j(m,\, k, \, n).
$$
\end{Trm}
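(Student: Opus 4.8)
\emph{Upper bound.} Write $M=\nu_1 B^{m,k}_{p_1,\theta_1}\cap\nu_2 B^{m,k}_{p_2,\theta_2}$. The plan is to establish the two estimates separately, the upper one being the easier. The elementary ingredient is the multiplicative interpolation inequality for mixed norms: if $\lambda\in[0,\,1]$ and $\frac1p=\frac{1-\lambda}{p_1}+\frac{\lambda}{p_2}$, $\frac1\theta=\frac{1-\lambda}{\theta_1}+\frac{\lambda}{\theta_2}$, then $\|x\|_{l^{m,k}_{p,\theta}}\le\|x\|_{l^{m,k}_{p_1,\theta_1}}^{1-\lambda}\,\|x\|_{l^{m,k}_{p_2,\theta_2}}^{\lambda}$; this follows from the log-convexity of the $\ell_p$-norms applied first in the index $i$ and then, through H\"older's inequality, in the index $j$. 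For $x\in M$ the right-hand side does not exceed $\nu_1^{1-\lambda}\nu_2^{\lambda}$, so $M\subset\nu_1^{1-\lambda}\nu_2^{\lambda}B^{m,k}_{p,\theta}$; since $\frac1p$, $\frac1\theta$ are convex combinations, automatically $p\in[1,\,q]$ and $\theta\in[1,\,\sigma]$, so Theorem \ref{theorem_a} applies to $B^{m,k}_{p,\theta}$. Monotonicity of the width under inclusion then yields $d_n(M,\,l^{m,k}_{q,\sigma})\le\nu_1^{1-\lambda}\nu_2^{\lambda}d_n(B^{m,k}_{p,\theta},\,l^{m,k}_{q,\sigma})$, and choosing $\lambda=0,\,1$ together with the values giving $p=2$, $\theta=2$, and $\lambda_{p,q}=\lambda_{\theta,\sigma}$ produces $d_n(M)\lesssim\Phi_j$ for $j=1,\dots,5$; hence $d_n(M)\lesssim\min_{1\le j\le5}\Phi_j$.

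\emph{Reduction to the five quantities.} To see that these values already capture the whole interpolation family, and thereby to set up the matching lower bound, I would examine the one-variable function $g(\lambda)=\nu_1^{1-\lambda}\nu_2^{\lambda}d_n(B^{m,k}_{p(\lambda),\theta(\lambda)},\,l^{m,k}_{q,\sigma})$ on $[0,\,1]$, where $(p(\lambda),\,\theta(\lambda))$ denotes the interpolated pair. The term $\log(\nu_1^{1-\lambda}\nu_2^{\lambda})$ is affine in $\lambda$, and by (\ref{dn_1})--(\ref{dn_3}) so is $\log d_n(B^{m,k}_{p(\lambda),\theta(\lambda)})$ on each subinterval: the exponents $\lambda_{p,q}$, $\lambda_{\theta,\sigma}$ are affine in $\frac1p$, $\frac1\theta$, hence in $\lambda$, while the thresholds $m^{2/q}k^{2/\sigma}$, $mk^{2/\sigma}$, $km^{2/q}$ do not depend on $\lambda$. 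Thus $\log g$ is continuous and piecewise affine, its breakpoints occurring only where the segment $\lambda\mapsto(\frac1{p(\lambda)},\,\frac1{\theta(\lambda)})$ meets a boundary $p=2$ or $\theta=2$ (where both the switch to (\ref{dn_1}) and the kink of $\lambda_{p,q}$, $\lambda_{\theta,\sigma}$ at $2$ take place) or the boundary $\lambda_{p,q}=\lambda_{\theta,\sigma}$. A continuous piecewise affine function on $[0,\,1]$ attains its minimum at an endpoint or a breakpoint, and these are exactly the parameters defining $\Phi_1,\dots,\Phi_5$, the two formulas agreeing at $\lambda_{p,q}=\lambda_{\theta,\sigma}$ by (\ref{1234}). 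Consequently $\min_{\lambda\in[0,1]}g(\lambda)\asymp\min_{1\le j\le5}\Phi_j$.

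\emph{Lower bound.} The core of the argument is the reverse inequality $d_n(M)\gtrsim\min_{\lambda}g(\lambda)$. Let $\lambda^*$ attain the minimum, set $(p^*,\,\theta^*)=(p(\lambda^*),\,\theta(\lambda^*))$ and $\rho^*=\nu_1^{1-\lambda^*}\nu_2^{\lambda^*}$, and recall that the single-ball lower estimate for $d_n(B^{m,k}_{p^*,\theta^*},\,l^{m,k}_{q,\sigma})$ of Theorem \ref{theorem_a} (extended to the full range $n\le\frac{mk}2$ in Corollary \ref{cor1}) is realized on an extremal set supported on a sub-block of size $m'\times k'$, whose shape is governed by the position of $n$ among the thresholds. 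I would transfer this estimate by showing that a scaled sub-block copy of that set lies in $M$: then monotonicity gives $d_n(M)\gtrsim\rho^* d_n(B^{m,k}_{p^*,\theta^*})$. For a constant-modulus vector $w$ filling the sub-block, $\|w\|_{l_{p_i,\theta_i}}=\|w\|_{\infty}(m')^{1/p_i}(k')^{1/\theta_i}$; normalizing by $\|w\|_{l_{p^*,\theta^*}}=1$ and using $\frac1{p_1}-\frac1{p^*}=\lambda^*\bigl(\frac1{p_1}-\frac1{p_2}\bigr)$, $\frac1{p_2}-\frac1{p^*}=-(1-\lambda^*)\bigl(\frac1{p_1}-\frac1{p_2}\bigr)$ (and the analogous identities for $\theta$), the membership conditions $\rho^*\|w\|_{l_{p_i,\theta_i}}\le\nu_i$, $i=1,2$, reduce to $R^{\lambda^*}\le1$ and $R^{-(1-\lambda^*)}\le1$, where $R=\frac{\nu_2}{\nu_1}(m')^{1/p_1-1/p_2}(k')^{1/\theta_1-1/\theta_2}$. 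Being reciprocal powers, both hold exactly when $R\asymp1$; this balance selects the admissible sub-block shapes, and requiring that such a shape also realize the single-ball width forces $\lambda^*$ onto one of the critical interpolation values, closing the loop with the upper bound.

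\emph{Main obstacle.} The principal difficulty lies in this lower bound. The clean balance $R\asymp1$ above is derived for a fully spread, constant-modulus configuration, whereas the genuinely extremal sets for $d_n(B^{m,k}_{p^*,\theta^*})$ vary with the regime (a Euclidean section, a scaled sub-block ball, or a set of sign vectors, according to where $n$ falls among $m^{2/q}k^{2/\sigma}$, $mk^{2/\sigma}$, $km^{2/q}$) and according to whether $\max\{p^*,\,\theta^*\}\lessgtr2$ and $\lambda_{p^*,q}\lessgtr\lambda_{\theta^*,\sigma}$. The real work is to check, in each such case and for each ordering of $(p_1,\,p_2)$ and $(\theta_1,\,\theta_2)$, that a sub-block shape $(m',\,k')$ can be chosen to realize the single-ball width and to satisfy $R\asymp1$ simultaneously; it is precisely here that the special role of the breakpoints $p=2$, $\theta=2$, $\lambda_{p,q}=\lambda_{\theta,\sigma}$ (hence of $\Phi_3,\Phi_4,\Phi_5$) is forced, since at a non-critical interior $\lambda$ a width-realizing shape would violate one of the two constraints. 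A preliminary step, which I would carry out first, is to prove Corollary \ref{cor1}, extending the single-ball lower estimate from $n\le a(q,\sigma)mk$ to the whole range $n\le\frac{mk}2$.
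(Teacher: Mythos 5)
Your upper bound is exactly the paper's (the interpolation inclusion of Lemma \ref{emb} plus monotonicity of widths), and your piecewise-affine reduction of the interpolation family $g(\lambda)$ to the five values $\Phi_1,\dots,\Phi_5$ is a correct and tidy repackaging of the comparisons the paper carries out case by case. The genuine gap is in the lower bound, which is the substance of the theorem. Your plan is to fix the minimizing $\lambda^*$, take \emph{the} sub-block configuration that realizes the single-ball width $d_n(B^{m,k}_{p^*,\theta^*},\,l^{m,k}_{q,\sigma})$, and scale it into the intersection; as you compute, membership reduces to $R^{\lambda^*}\le 1$ and $R^{-(1-\lambda^*)}\le 1$ with $R=\frac{\nu_2}{\nu_1}(m')^{1/p_1-1/p_2}(k')^{1/\theta_1-1/\theta_2}$, hence to $R\asymp 1$ when $\lambda^*$ is interior. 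But $R$ depends on the block shape \emph{and} on the datum $\nu_1/\nu_2$, and the single-ball width-realizing shape is essentially unique for a given regime of $n$ (for instance $(r,\,1)$ with $r\asymp(n^{\frac 12}m^{-\frac 1q}k^{-\frac{1}{\sigma}})^{\frac{1}{1/2-1/q}}$ when $m^{\frac 2q}k^{\frac{2}{\sigma}}\le n\le mk^{\frac{2}{\sigma}}$). That shape satisfies $R\asymp1$ only for one isolated value of $\nu_1/\nu_2$ --- an endpoint of the interval (\ref{nu1nu2int}) on which $\Phi_5$ is actually the minimum. For every other ratio in that interval your set is simply not contained in the intersection, so the transfer fails as stated; and nothing is ``forced onto a critical value,'' since $\lambda^*$ is already fixed by the minimization.

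What is missing is the mechanism the paper builds precisely for this: a one-parameter \emph{family} of block shapes, e.g.\ in Lemma \ref{lemma04} $\tilde r=(n^{\frac 12}m^{-\frac 1q}k^{-\frac{1}{\sigma}})^{\frac{1-\alpha}{1/2-1/q}}$, $\tilde l=(n^{\frac 12}m^{-\frac 1q}k^{-\frac{1}{\sigma}})^{\frac{\alpha}{1/2-1/\sigma}}$, $\alpha\in[0,\,1]$, all of which respect the threshold $n\le m^{\frac 2q}k^{\frac{2}{\sigma}}r^{1-\frac 2q}l^{1-\frac{2}{\sigma}}$ required by Proposition \ref{v_dn}, with $\alpha$ then tuned so that the balance (\ref{n1n2rp1lt1}) (your $R=1$) holds exactly for the given $\nu_1/\nu_2$; the lower bound $\nu_1^{1-\lambda}\nu_2^{\lambda}r^{\frac 1q-\frac 1p}l^{\frac{1}{\sigma}-\frac{1}{\theta}}$ so obtained is \emph{independent of $\alpha$} precisely at the critical parameters, by identity (\ref{1234}) --- this invariance is why $\Phi_3$, $\Phi_4$, $\Phi_5$ enter, and it is absent from your sketch. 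Beyond this idea, the proof still requires Proposition \ref{v_dn}/Corollary \ref{cor1} (extending (\ref{dn_vmk}) to all $n\le\frac{mk}{2}$, which you only flag as a preliminary), the one-sided variants used when $\Phi_1$ or $\Phi_2$ is minimal (Lemmas \ref{lemma02}, \ref{lemma03}, \ref{lemma07}), and the verification, regime by regime in $n$ and range by range in $\nu_1/\nu_2$, that the identified minimum is matched (Lemmas \ref{lemma02}--\ref{lemma10} and the eleven cases of \S 3). You have the right skeleton, but the lower bound --- essentially all of the work --- is not proved.
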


The result is announced in \cite{vas_mz}.

\section{Auxiliary results}

Let $k$, $m$, $r$, $l\in \N$, $1\le r\le m$, $1\le l\le k$. We set $$G=\{(\tau_1, \, \tau_2, \, \varepsilon_1, \, \varepsilon_2):\; \tau_1\in S_m, \, \tau_2\in S_k, \, \varepsilon_1\in \{1, \, ‐1\}^m, \, \varepsilon_2\in \{1, \, ‐1\}^k\},$$
where $S_m$ and $S_k$ are groups of permutations of $m$ and $k$ elements, respectively. For $x=(x_{i,j})_{1\le i\le m, \, 1\le j\le k}\in \R^{mk}$, $\gamma = (\tau_1, \, \tau_2, \, \varepsilon_1, \, \varepsilon_2)\in G$, $\varepsilon_1=(\varepsilon_{1,i})_{1\le i\le m}$, $\varepsilon_2=(\varepsilon_{2,j})_{1\le j\le k}$, we set 
\begin{align}
\label{gamma_x_def}
\gamma(x) = (\varepsilon_{1,i}\varepsilon_{2,j}x_{\tau_1(i)\tau_2(j)})_{1\le i\le m, \, 1\le j\le k}.
\end{align}

We write $e=(e_{i,j}^{m,k,r,l})_{1\le i\le m, \, 1\le j\le k}$, where
\begin{align}
\label{eij_kl}
e_{i,j}^{m,k,r,l} = \left\{ \begin{array}{l} 1 \quad \text{if } 1\le i\le r, \; 1\le j\le l, \\ 0 \quad \text{otherwise},\end{array} \right.
\end{align}
\begin{align}
\label{vrl_km}
V_{r,l}^{m,k} = {\rm conv}\{\gamma(e):\; \gamma\in G\}.
\end{align}

In \cite[formula (34)]{vas_besov} the following assertion was obtained: if $2\le q<\infty$, $2\le \sigma<\infty$, $n\in \Z_+$, $n\le a(q, \, \sigma) m^{\frac 2q}k^{\frac{2}{\sigma}}r^{1‐\frac 2q} l^{1‐\frac{2}{\sigma}}$, then
\begin{align}
\label{dn_vmk}
d_n(V^{m,k}_{r,l}, \, l^{m,k}_{q,\sigma}) \ge b(q, \, \sigma) r^{\frac 1q}l^{\frac{1}{\sigma}};
\end{align}
here $a(q, \, \sigma)>0$, $b(q, \, \sigma)>0$, $a(\cdot, \, \cdot)$ is a function nonincreasing in each argument, $b(\cdot, \, \cdot)$ is a continuous function. Here we obtain the estimate for all $n\le \frac{mk}{2}$. We apply the method from the paper of Gluskin \cite{gluskin1}.

\begin{Sta}
\label{v_dn}
Let $2\le q<\infty$, $2\le \sigma <\infty$, $n\in \Z_+$, $n\le \frac{mk}{2}$. Then
\begin{align}
\label{dn_vmk1} d_n(V^{m,k}_{r,l}, \, l^{m,k}_{q,\sigma}) \underset{q,\sigma}{\gtrsim} \begin{cases} r^{\frac 1q}l^{\frac{1}{\sigma}} & \text{if}\; n\le  m^{\frac 2q}k^{\frac{2}{\sigma}}r^{1‐\frac 2q} l^{1‐\frac{2}{\sigma}}, \\ n^{‐\frac 12}m^{\frac 1q}k^{\frac{1}{\sigma}} r^{\frac 12} l^{\frac 12} & \text{if}\; n\ge  m^{\frac 2q}k^{\frac{2}{\sigma}}r^{1‐\frac 2q} l^{1‐\frac{2}{\sigma}}.\end{cases}
\end{align}
\end{Sta}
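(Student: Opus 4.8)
The plan is to prove the second (decreasing) estimate $n^{-\frac12}m^{\frac1q}k^{\frac1\sigma}r^{\frac12}l^{\frac12}$ for $n\ge n_0:=m^{\frac2q}k^{\frac2\sigma}r^{1-\frac2q}l^{1-\frac2\sigma}$, and to deduce the first from it. The two right-hand sides of \eqref{dn_vmk1} coincide at $n=n_0$, where both equal $r^{\frac1q}l^{\frac1\sigma}$; hence, once the second estimate is known at $n=\lfloor n_0\rfloor$, the monotonicity $d_n\ge d_{\lfloor n_0\rfloor}$ of Kolmogorov widths gives $d_n(V^{m,k}_{r,l},l^{m,k}_{q,\sigma})\gtrsim r^{\frac1q}l^{\frac1\sigma}$ for all $n\le n_0$, which is the first line (the remaining small range, and the degenerate case $n_0>\frac{mk}{2}$, being covered directly by \eqref{dn_vmk}). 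So the whole content is the bound for $n\gtrsim n_0$.

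For the lower bound I would work primally, exploiting the invariance of $V^{m,k}_{r,l}$ under the group $G$. Since $\gamma(e)\in V^{m,k}_{r,l}$ for every $\gamma\in G$, one has $d_n\ge\inf_{\dim L\le n}\max_{\gamma\in G}\dist_{q,\sigma}(\gamma(e),L)$. The order of magnitude is pinned down by a second moment computation: writing $P=P_{L^\perp}$ and averaging $\|P\gamma(e)\|_2^2=\langle P\gamma(e),\gamma(e)\rangle$ over $G$, the sign flips kill the off-diagonal terms and the permutations equalise the diagonal, so that $\frac{1}{|G|}\sum_\gamma\gamma(e)\gamma(e)^{\top}=\frac{rl}{mk}\,\mathrm{Id}$; therefore $\frac{1}{|G|}\sum_\gamma\|P\gamma(e)\|_2^2=\frac{rl}{mk}\,\mathrm{rank}\,P\ge rl\bigl(1-\tfrac{n}{mk}\bigr)\ge\tfrac12 rl$, using $\dim L\le n\le\frac{mk}{2}$. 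Hence $\max_\gamma\dist_2(\gamma(e),L)\gtrsim(rl)^{\frac12}$, which is the correct Euclidean content. (Equivalently, Ioffe–Tikhomirov duality \cite{ioffe_tikh} turns the goal into finding, in every subspace of codimension $\le n$, a vector small in $l^{m,k}_{q',\sigma'}$ but ``spread'' across many $r\times l$ blocks; I would keep the primal formulation.)

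The difficulty, and the point at which the method of Gluskin \cite{gluskin1} is needed, is passing from $\dist_2$ to $\dist_{q,\sigma}$ with the sharp factor. The naive comparison $\|z\|_{q,\sigma}\ge m^{\frac1q-\frac12}k^{\frac1\sigma-\frac12}\|z\|_2$ (equality for flat $z$) only yields $d_n\gtrsim m^{\frac1q-\frac12}k^{\frac1\sigma-\frac12}(rl)^{\frac12}$, which falls short of the target by the factor $(mk/n)^{\frac12}$. The reason is that for a best $n$-dimensional $L$ the residual $P_{L^\perp}\gamma(e)$ is not flat over all $mk$ coordinates but essentially supported on the $\sim rl$ block coordinates, flattened only in the $\le n$ directions of $L$; capturing this precisely is exactly Gluskin's random–operator–norm estimate. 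Concretely, I would realise the approximant through a random (Gaussian) operator into $l^{m,k}_{q,\sigma}$, estimate the relevant mixed-norm operator norm from below, and make the estimate uniform over the Grassmannian of $n$-dimensional subspaces by a covering argument combined with concentration, so that the sharp factor $n^{-\frac12}m^{\frac1q}k^{\frac1\sigma}$ (rather than $m^{\frac1q-\frac12}k^{\frac1\sigma-\frac12}$) emerges.

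The main obstacle is therefore this uniform, sharp mixed-norm estimate with the correct dependence on all six parameters $r,l,m,k,q,\sigma$: Gluskin's bound on norms of random matrices must be carried through the two-parameter norm $\|\cdot\|_{q,\sigma}$, and the covering/concentration step must be made uniform over all $n$-dimensional subspaces. The threshold $n_0$ is precisely of the order of the Dvoretzky dimension of the relevant $r\times l$ block inside $l^{m,k}_{q,\sigma}$: below it a single Euclidean-section argument suffices (this is \eqref{dn_vmk}), while above it the genuinely probabilistic estimate is unavoidable. Everything else is the bookkeeping of the reductions above.
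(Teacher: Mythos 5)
Your reduction of the first line of \eqref{dn_vmk1} to the second is fine in spirit, and your averaging computation in $l^{m,k}_{2,2}$ is precisely the paper's proof of \eqref{dn_vmk_22}; together with the crude comparison $\|z\|_{l^{m,k}_{q,\sigma}}\ge m^{\frac 1q-\frac 12}k^{\frac{1}{\sigma}-\frac 12}\|z\|_{l^{m,k}_{2,2}}$ this settles only the regime $a(q,\sigma)mk\le n\le \frac{mk}{2}$, where $mk\asymp n$ and nothing is lost. The genuine gap is the entire middle regime $a(q,\sigma)\,m^{\frac 2q}k^{\frac{2}{\sigma}}r^{1-\frac 2q}l^{1-\frac{2}{\sigma}}\le n\le a(q,\sigma)mk$, which is the heart of the proposition: there you offer no proof, only a program (Gaussian operators into $l^{m,k}_{q,\sigma}$, lower bounds for mixed-norm operator norms, a covering of the Grassmannian plus concentration) that you yourself call ``the main obstacle'' and leave unexecuted. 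That program is exactly the hard part, and carrying it out in the two-parameter norm would be a substantial piece of work in its own right; as written, the central estimate of the proposal is assumed rather than proved.

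The idea you are missing is that no new probabilistic input is needed, because \eqref{dn_vmk} is available not just for the target exponents but for the whole family $l^{m,k}_{\tilde q,\tilde\sigma}$ with $\tilde q\in[2,q]$, $\tilde\sigma\in[2,\sigma]$, and one can shoot with these exponents: this is how Gluskin's method \cite{gluskin1} is actually used in the paper. Given $n$ in the middle range, choose $\tilde q\in[2,q]$, $\tilde\sigma\in[2,\sigma]$ with $n=a(q,\sigma)m^{\frac{2}{\tilde q}}k^{\frac{2}{\tilde\sigma}}r^{1-\frac{2}{\tilde q}}l^{1-\frac{2}{\tilde\sigma}}$ (possible since $rl\,(m/r)^{2/\tilde q}(k/l)^{2/\tilde\sigma}$ moves continuously between $m^{\frac 2q}k^{\frac{2}{\sigma}}r^{1-\frac 2q}l^{1-\frac{2}{\sigma}}$ and $mk$ as $(\tilde q,\tilde\sigma)$ runs from $(q,\sigma)$ to $(2,2)$). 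Since $a(\cdot,\cdot)$ is nonincreasing in each argument, $n\le a(\tilde q,\tilde\sigma)m^{\frac{2}{\tilde q}}k^{\frac{2}{\tilde\sigma}}r^{1-\frac{2}{\tilde q}}l^{1-\frac{2}{\tilde\sigma}}$, so \eqref{dn_vmk} applies in $l^{m,k}_{\tilde q,\tilde\sigma}$ and gives $d_n(V^{m,k}_{r,l},\,l^{m,k}_{\tilde q,\tilde\sigma})\ge b(\tilde q,\tilde\sigma)r^{\frac{1}{\tilde q}}l^{\frac{1}{\tilde\sigma}}\underset{q,\sigma}{\gtrsim}r^{\frac{1}{\tilde q}}l^{\frac{1}{\tilde\sigma}}$, the constant being uniform by continuity of $b$ on the compact set $[2,q]\times[2,\sigma]$. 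Now the very comparison you dismissed as lossy, applied from $(\tilde q,\tilde\sigma)$ instead of from $(2,2)$, i.e.\ $d_n(V^{m,k}_{r,l},\,l^{m,k}_{q,\sigma})\ge m^{\frac 1q-\frac{1}{\tilde q}}k^{\frac{1}{\sigma}-\frac{1}{\tilde\sigma}}d_n(V^{m,k}_{r,l},\,l^{m,k}_{\tilde q,\tilde\sigma})$, is sharp: the defining equation for $(\tilde q,\tilde\sigma)$ turns $m^{\frac 1q-\frac{1}{\tilde q}}k^{\frac{1}{\sigma}-\frac{1}{\tilde\sigma}}r^{\frac{1}{\tilde q}}l^{\frac{1}{\tilde\sigma}}$ into $n^{-\frac 12}m^{\frac 1q}k^{\frac{1}{\sigma}}r^{\frac 12}l^{\frac 12}$ up to a factor depending only on $q,\sigma$. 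This short argument replaces your entire unproved program. A secondary flaw in your reduction: in the degenerate case $m^{\frac 2q}k^{\frac{2}{\sigma}}r^{1-\frac 2q}l^{1-\frac{2}{\sigma}}>\frac{mk}{2}$, falling back on \eqref{dn_vmk} covers only $n\le a(q,\sigma)m^{\frac 2q}k^{\frac{2}{\sigma}}r^{1-\frac 2q}l^{1-\frac{2}{\sigma}}$, not all $n\le\frac{mk}{2}$; in the paper that remaining range is again handled by the interpolation argument above together with the $l_{2,2}$ bound.
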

\begin{proof}
For $n\le a(q, \, \sigma)m^{\frac 2q}k^{\frac{2}{\sigma}}r^{1‐\frac 2q} l^{1‐\frac{2}{\sigma}}$, the estimate follows from (\ref{dn_vmk}).

Let $a(q, \, \sigma)m^{\frac 2q}k^{\frac{2}{\sigma}}r^{1‐\frac 2q} l^{1‐\frac{2}{\sigma}} \le n\le a(q, \, \sigma)mk$. There exist numbers $\tilde q\in [2, \, q]$ and $\tilde \sigma \in [2, \, \sigma]$ such that 
\begin{align}
\label{n_a_q_sigma}
n = a(q, \, \sigma)m^{\frac{2}{\tilde q}}k^{\frac{2}{\tilde\sigma}}r^{1‐\frac{2}{\tilde q}} l^{1‐\frac{2}{\tilde\sigma}}. 
\end{align}
Since the function $a(\cdot, \, \cdot)$ is non‐increasing in each argument, we have $n\le a(\tilde q, \, \tilde \sigma)m^{\frac{2}{\tilde q}}k^{\frac{2}{\tilde\sigma}}r^{1‐\frac{2}{\tilde q}} l^{1‐\frac{2}{\tilde\sigma}}$. Hence, by (\ref{dn_vmk}),
$$
d_n(V^{m,k}_{r,l}, \, l^{m,k}_{\tilde q,\tilde \sigma}) \ge b(\tilde q, \, \tilde \sigma) r^{\frac{1}{\tilde q}}l^{\frac{1}{\tilde\sigma}} \underset{q,\sigma}{\gtrsim} r^{\frac{1}{\tilde q}}l^{\frac{1}{\tilde\sigma}}
$$
(here we used the fact that $b$ is continuous). This implies
$$
d_n(V^{m,k}_{r,l}, \, l^{m,k}_{q,\sigma}) \ge m^{\frac 1q‐\frac{1}{\tilde q}}k^{\frac{1}{\sigma} ‐\frac{1}{\tilde \sigma}}d_n(V^{m,k}_{r,l}, \, l^{m,k}_{\tilde q,\tilde \sigma}) \underset{q,\sigma}{\gtrsim}
$$
$$
\gtrsim m^{\frac 1q‐\frac{1}{\tilde q}}k^{\frac{1}{\sigma} ‐\frac{1}{\tilde \sigma}}r^{\frac{1}{\tilde q}}l^{\frac{1}{\tilde\sigma}} \stackrel{(\ref{n_a_q_sigma})}{\underset{q,\sigma}{\asymp}} m^{\frac 1q}k^{\frac{1}{\sigma}}n^{‐\frac 12}r^{\frac 12} l^{\frac{1}{2}}.
$$

It remains to consider $a(q, \, \sigma)mk \le n \le \frac{mk}{2}$. First we show that 
\begin{align}
\label{dn_vmk_22}
d_n(V^{m,k}_{r,l}, \, l^{m,k}_{2,2}) \gtrsim r^{\frac 12}l^{\frac 12} \quad \text{for}\; n\le \frac{mk}{2}.
\end{align}
To this end, we argue as in \cite[pp. 14--17]{vas_besov} for this particular case. Let $Y\subset l_{2,2}^{m,k}$ be a subspace of dimension at most $n$, and let $y^\gamma =(y^\gamma_{i,j})_{1\le i\le m, \, 1\le j\le k}\in Y$ be a nearest point from $Y$ to $\gamma(e)$ (see (\ref{gamma_x_def}), (\ref{eij_kl})). Then
$$
\|\gamma(e)‐y^\gamma\|^2_{l_{2,2}^{m,k}} = \sum \limits _{j=1}^k \sum \limits _{i=1}^m |\gamma(e)_{i,j} ‐ y^\gamma_{i,j}|^2 = 
$$
$$
= \sum \limits _{j=1}^k \sum \limits _{i=1}^m |\gamma(e)_{i,j}|^2 + \sum \limits _{j=1}^k \sum \limits _{i=1}^m |y^\gamma_{i,j}|^2 ‐ 2 \sum \limits _{j=1}^k \sum \limits _{i=1}^m \gamma(e)_{i,j}y^\gamma_{i,j}.
$$
Averaging over $\gamma\in G$ and taking into account formulas (\ref{gamma_x_def}), (\ref{eij_kl}), we get
$$
\sup _{\gamma \in G}\|\gamma(e)‐y^\gamma\|^2_{l_{2,2}^{m,k}}\ge rl + \frac{1}{|G|} \sum \limits _{\gamma\in G}\sum \limits _{j=1}^k \sum \limits _{i=1}^m |y^\gamma_{i,j}|^2‐ 
$$
$$
‐2\frac{1}{|G|} \sum \limits _{\gamma\in G}\sum \limits _{j=1}^k \sum \limits _{i=1}^m \gamma(e)_{i,j}y^\gamma_{i,j}=:S.
$$
In \cite[pp. 16]{vas_besov} it was proved that
$$
\left|\frac{1}{|G|} \sum \limits _{\gamma\in G}\sum \limits _{j=1}^k \sum \limits _{i=1}^m \gamma(e)_{i,j}y^\gamma_{i,j}\right| \le \left(\frac{nrl}{mk}\right)^{1/2} \xi,
$$
where $\xi=\left(\frac{1}{|G|} \sum \limits _{\gamma\in G}\sum \limits _{j=1}^k \sum \limits _{i=1}^m |y^\gamma_{i,j}|^2\right)^{1/2}$.
Hence
$$
S\ge rl ‐2\left(\frac{nrl}{mk}\right)^{1/2}\xi + \xi^2 \ge rl \left(1‐\frac{n}{mk}\right).
$$
For $n\le \frac{mk}{2}$ we get $S\ge \frac{rl}{2}$; this together with (\ref{vrl_km}) implies (\ref{dn_vmk_22}).

Let now $q\in [2, \, \infty)$, $\sigma \in [2, \infty)$. Then
$$
d_n(V^{m,k}_{r,l}, \, l^{m,k}_{q,\sigma}) \stackrel{(\ref{dn_vmk_22})}{\gtrsim} m^{\frac 1q‐\frac 12} k^{\frac{1}{\sigma}‐\frac 12}r^{\frac 12} l^{\frac 12}, \quad n\le \frac{mk}{2}.
$$
Hence, for $a(q, \, \sigma)mk\le n\le \frac{mk}{2}$ we have
$$
d_n(V^{m,k}_{r,l}, \, l^{m,k}_{q,\sigma}) \underset{q,\sigma}{\gtrsim} n^{‐\frac 12}m^{\frac 1q} k^{\frac{1}{\sigma}}r^{\frac 12} l^{\frac 12}. 
$$
This completes the proof.
\end{proof}

\begin{Cor}
\label{cor1} Let $2\le q<\infty$, $2\le \sigma <\infty$, $1\le p\le q$, $1\le \theta \le \sigma$, $m$, $k$, $n\in \N$, $a(q, \, \sigma)mk \le n \le \frac{mk}{2}$. Then
$$
d_n(B^{m,k}_{p,\theta}, \, l^{m,k}_{q,\sigma}) \underset{q,\sigma}{\gtrsim} \begin{cases} m^{\frac 1q‐\frac 1p} k^{\frac{1}{\sigma} ‐\frac{1}{\theta}} & \text{if } \min\{p,\, \theta\}\ge 2, \\ m^{\frac 1q ‐\frac 1p +\frac 12}n^{‐\frac 12}k^{\frac{1}{\sigma}} & \text{if } p\ge 2, \; \theta \le 2, \\ k^{\frac{1}{\sigma} ‐\frac{1}{\theta} +\frac 12}n^{‐\frac 12}m^{\frac{1}{q}}, & \text{if } \theta\ge 2, \; p \le 2, \\ n^{‐\frac 12}m^{\frac{1}{q}}k^{\frac{1}{\sigma}} & \text{if }\max\{p, \, \theta\}\le 2.\end{cases}
$$
\end{Cor}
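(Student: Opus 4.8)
The plan is to bound the ball $B^{m,k}_{p,\theta}$ from below by a suitably scaled copy of the body $V^{m,k}_{r,l}$ and then feed this into the lower estimate already established while proving Proposition \ref{v_dn}. The free parameters $r$ and $l$ will be chosen according to the signs of $\frac12-\frac1p$ and $\frac12-\frac1\theta$, and the four sign patterns will produce exactly the four cases of the corollary.

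First I would observe that every generator $\gamma(e)$ of $V^{m,k}_{r,l}$ (see (\ref{gamma_x_def})--(\ref{vrl_km})) has the same mixed norm as $e$, namely $\|e\|_{l^{m,k}_{p,\theta}}=r^{1/p}l^{1/\theta}$: row permutations and sign changes preserve the $l_p$-norm of each column, while the column permutation $\tau_2$ only reshuffles these column norms, so the outer $l_\theta$-norm is unchanged. Since $B^{m,k}_{p,\theta}$ is convex, the inclusion $V^{m,k}_{r,l}\subset r^{1/p}l^{1/\theta}B^{m,k}_{p,\theta}$ follows, and monotonicity together with homogeneity of the widths gives
$$
d_n(B^{m,k}_{p,\theta},\,l^{m,k}_{q,\sigma})\ge r^{-1/p}l^{-1/\theta}\,d_n(V^{m,k}_{r,l},\,l^{m,k}_{q,\sigma}).
$$

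Next I would quote the bound obtained at the end of the proof of Proposition \ref{v_dn}, which combines (\ref{dn_vmk_22}) with the elementary embedding $\|x\|_{l^{m,k}_{q,\sigma}}\ge m^{1/q-1/2}k^{1/\sigma-1/2}\|x\|_{l^{m,k}_{2,2}}$ (valid for $q,\sigma\ge 2$ by the power-mean inequality applied in each index) to yield, for all $n\le\frac{mk}{2}$,
$$
d_n(V^{m,k}_{r,l},\,l^{m,k}_{q,\sigma})\underset{q,\sigma}{\gtrsim} m^{1/q-1/2}k^{1/\sigma-1/2}r^{1/2}l^{1/2}.
$$
Substituting into the previous display produces the master estimate
$$
d_n(B^{m,k}_{p,\theta},\,l^{m,k}_{q,\sigma})\underset{q,\sigma}{\gtrsim} m^{1/q-1/2}k^{1/\sigma-1/2}r^{1/2-1/p}l^{1/2-1/\theta},\qquad n\le\tfrac{mk}{2},
$$
valid for every $1\le r\le m$ and $1\le l\le k$.

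Finally I would optimise over $r$ and $l$. The right-hand side is monotone in each of $r$ and $l$ according to the signs of $\frac12-\frac1p$ and $\frac12-\frac1\theta$, so I take $r=m$ when $p\ge 2$ and $r=1$ when $p\le 2$, and likewise $l=k$ when $\theta\ge 2$ and $l=1$ when $\theta\le 2$. This yields the four $n$-free expressions $m^{1/q-1/p}k^{1/\sigma-1/\theta}$, $m^{1/q-1/p}k^{1/\sigma-1/2}$, $m^{1/q-1/2}k^{1/\sigma-1/\theta}$, $m^{1/q-1/2}k^{1/\sigma-1/2}$ attached to the four sign patterns. The only remaining point, and the only place where the hypothesis $n\ge a(q,\sigma)mk$ enters, is to match these with the $n$-dependent forms in the statement: since $a(q,\sigma)mk\le n\le\frac{mk}{2}$ forces $n\underset{q,\sigma}{\asymp}mk$, we have $n^{-1/2}\underset{q,\sigma}{\asymp}(mk)^{-1/2}$, and replacing $(mk)^{-1/2}$ by $n^{-1/2}$ in each case turns the four expressions into those written in the corollary. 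I expect this bookkeeping to be the only delicate step; the geometric content is entirely contained in the single inequality of the third paragraph, so no genuine obstacle arises.
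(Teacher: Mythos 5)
Your proposal is correct and is essentially the paper's own argument: the paper proves the corollary via exactly your four inclusions $r^{-1/p}l^{-1/\theta}V^{m,k}_{r,l}\subset B^{m,k}_{p,\theta}$ with $(r,l)\in\{(m,k),\,(m,1),\,(1,k),\,(1,1)\}$, followed by an appeal to Proposition \ref{v_dn}, whose proof in the regime $a(q,\sigma)mk\le n\le \frac{mk}{2}$ consists precisely of the combination of (\ref{dn_vmk_22}) with the norm comparison $\|x\|_{l^{m,k}_{q,\sigma}}\ge m^{1/q-1/2}k^{1/\sigma-1/2}\|x\|_{l^{m,k}_{2,2}}$ that you re-derive as your master estimate. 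The only difference is organizational (you inline that part of Proposition \ref{v_dn} rather than citing it), and your final bookkeeping using $n\asymp_{q,\sigma}mk$ to trade $(mk)^{-1/2}$ for $n^{-1/2}$ is exactly the step the paper's terse proof leaves implicit.
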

\begin{proof}
For $\min\{p, \, \theta\}\ge 2$, we use the inclusion $m^{‐\frac 1p}k^{‐\frac{1}{\theta}}V^{m,k}_{m,k} \subset B^{m,k}_{p,\theta}$, for $\theta \le 2\le p$, the inclusion $m^{‐\frac 1p}V^{m,k}_{m,1}\subset B^{m,k}_{p,\theta}$, for $p\le 2 \le \theta$, the inclusion $k^{‐\frac{1}{\theta}}V^{m,k}_{1,k} \subset B^{m,k}_{p,\theta}$, for $\max\{p, \, \theta\}\le 2$, the inclusion $V^{m,k}_{1,1}\subset B^{m,k}_{p,\theta}$.
\end{proof}

In what follows, $m$, $k\in \N$, $n\in \Z_+$, $n\le \frac{mk}{2}$, $\nu_i>0$, $1=1, \, 2$.

\begin{Lem}
\label{emb}
Let $1\le p_i\le \infty$, $1\le \theta_i\le \infty$, $\lambda \in [0, \, 1]$. We define the numbers $p$, $\theta\in [1, \, \infty]$ by
\begin{align}
\label{emb_pt}
\frac 1p = \frac{1‐\lambda}{p_1} + \frac{\lambda}{p_2}, \quad \frac{1}{\theta} = \frac{1‐\lambda}{\theta_1} + \frac{\lambda}{\theta_2}.
\end{align}
Then
$$
\nu_1B_{p_1,\theta_1}^{m,k} \cap \nu_2B_{p_2,\theta_2} ^{m,k} \subset \nu_1^{1‐\lambda} \nu_2^\lambda B^{m,k} _{p,\theta}.
$$
\end{Lem}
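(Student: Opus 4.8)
The plan is to reduce the claimed inclusion to the multiplicative interpolation inequality
$$
\|x\|_{l^{m,k}_{p,\theta}} \le \|x\|_{l^{m,k}_{p_1,\theta_1}}^{1-\lambda}\,\|x\|_{l^{m,k}_{p_2,\theta_2}}^{\lambda}, \qquad x\in\R^{mk},
$$
valid whenever $p$, $\theta$ are defined by (\ref{emb_pt}). Indeed, if $x\in \nu_1B_{p_1,\theta_1}^{m,k}\cap\nu_2B_{p_2,\theta_2}^{m,k}$, then $\|x\|_{l^{m,k}_{p_1,\theta_1}}\le\nu_1$ and $\|x\|_{l^{m,k}_{p_2,\theta_2}}\le\nu_2$, so the inequality yields $\|x\|_{l^{m,k}_{p,\theta}}\le\nu_1^{1-\lambda}\nu_2^{\lambda}$, which is exactly the statement $x\in\nu_1^{1-\lambda}\nu_2^{\lambda}B^{m,k}_{p,\theta}$. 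The cases $\lambda\in\{0,1\}$ are trivial, so I may assume $0<\lambda<1$.

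To prove the interpolation inequality I would apply H\"older's inequality twice. First, for each fixed column $j$ I treat the vector $(x_{i,j})_{1\le i\le m}$: since $\frac1p=\frac{1-\lambda}{p_1}+\frac{\lambda}{p_2}$, writing $|x_{i,j}|^p=|x_{i,j}|^{p(1-\lambda)}|x_{i,j}|^{p\lambda}$ and applying H\"older with the exponents $\frac{p_1}{p(1-\lambda)}$ and $\frac{p_2}{p\lambda}$ (conjugate precisely because of the relation for $1/p$) yields the classical bound
$$
\Big(\sum_{i=1}^m|x_{i,j}|^p\Big)^{1/p} \le \Big(\sum_{i=1}^m|x_{i,j}|^{p_1}\Big)^{(1-\lambda)/p_1}\Big(\sum_{i=1}^m|x_{i,j}|^{p_2}\Big)^{\lambda/p_2}.
$$
Denoting $a_j=\|(x_{i,j})_{1\le i\le m}\|_{l_{p_1}^m}$ and $b_j=\|(x_{i,j})_{1\le i\le m}\|_{l_{p_2}^m}$, this says that the inner $l_p$-norm of column $j$ is at most $a_j^{1-\lambda}b_j^{\lambda}$.

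Second, I raise to the power $\theta$ and sum over $j$: using $\frac1\theta=\frac{1-\lambda}{\theta_1}+\frac{\lambda}{\theta_2}$ and applying H\"older to $\sum_j a_j^{\theta(1-\lambda)}b_j^{\theta\lambda}$ with the exponents $\frac{\theta_1}{\theta(1-\lambda)}$ and $\frac{\theta_2}{\theta\lambda}$ gives
$$
\Big(\sum_{j=1}^k(a_j^{1-\lambda}b_j^{\lambda})^{\theta}\Big)^{1/\theta} \le \Big(\sum_{j=1}^k a_j^{\theta_1}\Big)^{(1-\lambda)/\theta_1}\Big(\sum_{j=1}^k b_j^{\theta_2}\Big)^{\lambda/\theta_2} = \|x\|_{l^{m,k}_{p_1,\theta_1}}^{1-\lambda}\|x\|_{l^{m,k}_{p_2,\theta_2}}^{\lambda}.
$$
Since, by the first step and monotonicity of the outer $l_\theta$-norm, the left-hand side is at least $\|x\|_{l^{m,k}_{p,\theta}}$, the interpolation inequality follows.

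The only point requiring care---the main, though minor, obstacle---is the bookkeeping of exponents: each time one must check that the two H\"older exponents are genuinely conjugate, which is equivalent to the defining relations (\ref{emb_pt}), and one must handle the degenerate cases in which some of $p_1,p_2,\theta_1,\theta_2$ equal $\infty$ (so that the corresponding sum is replaced by a supremum) or a factor such as $p(1-\lambda)$ or $p\lambda$ vanishes. These reduce to the standard conventions for $l_\infty$ and are either verified directly or obtained by a limiting argument, so they do not affect the structure of the argument.
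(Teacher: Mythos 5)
Your proof is correct and follows essentially the same route as the paper: both reduce the inclusion to the interpolation inequality $\|x\|_{l_{p,\theta}^{m,k}} \le \|x\|_{l_{p_1,\theta_1}^{m,k}}^{1-\lambda}\|x\|_{l_{p_2,\theta_2}^{m,k}}^{\lambda}$ and establish it by two applications of H\"older's inequality, first columnwise with exponents $\frac{p_1}{p(1-\lambda)}$, $\frac{p_2}{p\lambda}$ and then over $j$ with exponents $\frac{\theta_1}{\theta(1-\lambda)}$, $\frac{\theta_2}{\theta\lambda}$ (the paper phrases the same exponents via auxiliary parameters $\beta$, $\gamma$). Your additional remarks on the degenerate cases with infinite exponents are a harmless refinement that the paper leaves implicit.
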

\begin{proof}
It is sufficient to prove that
$$
\|(x_{i,j})_{1\le i\le m,\, 1\le j\le k}\|_{l_{p,\theta}^{m,k}} \le \|(x_{i,j})_{1\le i\le m,\, 1\le j\le k}\|^{1‐\lambda}_{l_{p_1,\theta_1}^{m,k}} \|(x_{i,j})_{1\le i\le m,\, 1\le j\le k}\|_{l_{p_2,\theta_2}^{m,k}}^\lambda.
$$
We define the number $\beta$ by the equation $\frac{\beta}{p} = \frac{\lambda}{p_2}$. From (\ref{emb_pt}) it follows that $\frac{1‐\beta}{p} = \frac{1‐\lambda}{p_1}$; hence $\beta \in [0, \, 1]$. By H\"{o}lder's inequality, we get for each $j\in \{1, \, \dots, \, k\}$
\begin{align}
\label{xij_p_k}
\|(x_{i,j})_{1\le i\le m}\|_{l_p^m} =\left(\sum \limits _{i=1}^m |x_{i,j}|^{p(1‐\lambda)} |x_{i,j}|^{p\lambda}\right)^{\frac 1p} \le \|(x_{i,j})_{1\le i\le m}\|_{l_{p_1}^m}^{1‐\lambda}\|(x_{i,j})_{1\le i\le m}\|_{l_{p_2}^m}^{\lambda}.
\end{align}
Now we define $\gamma$ by the equation $\frac{\gamma}{\theta} = \frac{\lambda}{\theta_2}$. Then $\frac{1-\gamma}{\theta} \stackrel{(\ref{emb_pt})}{=} \frac{1-\lambda}{\theta_1}$; hence $\gamma \in [0, \, 1]$. By H\"{o}lder's inequality, we obtain
$$
\|(x_{i,j})_{1\le i\le m,\, 1\le j\le k}\|_{l_{p,\theta}^{m,k}} \stackrel{(\ref{xij_p_k})}{\le} \left(\sum \limits _{j=1}^k \|(x_{i,j})_{1\le i\le m}\|_{l_{p_1}^m}^{(1‐\lambda)\theta}\|(x_{i,j})_{1\le i\le m}\|_{l_{p_2}^m}^{\lambda\theta}\right)^{\frac{1}{\theta}}\le
$$
$$
\le \|(x_{i,j})_{1\le i\le m,\, 1\le j\le k}\|^{1‐\lambda}_{l_{p_1,\theta_1}^{m,k}} \|(x_{i,j})_{1\le i\le m,\, 1\le j\le k}\|_{l_{p_2,\theta_2}^{m,k}}^\lambda.
$$
This completes the proof.
\end{proof}

\begin{Lem}
\label{incl_1} Let $\lambda \in [0, \, 1]$, $\frac 1p = \frac{1‐\lambda}{p_1} +\frac{\lambda}{p_2}$, $\frac{1}{\theta} = \frac{1‐\lambda}{\theta_1} +\frac{\lambda}{\theta_2}$, $\tilde r \in [1, \, m]$, $\tilde l = [1, \, k]$, $r = \lfloor \tilde r \rfloor$ or $r=\lceil \tilde r \rceil$, $l = \lfloor \tilde l \rfloor$ or $l=\lceil \tilde l \rceil$, 
\begin{align}
\label{n1n2rp1lt1}
\frac{\nu_1}{\nu_2} = \tilde r^{\frac{1}{p_1}‐\frac{1}{p_2}} \tilde l ^{\frac{1}{\theta_1} ‐\frac{1}{\theta_2}}.
\end{align}
Then
$$
\nu_1^{1‐\lambda} \nu_2^\lambda r^{‐\frac 1p} l^{‐\frac{1}{\theta}} V^{m,k}_{r,l} \subset 4(\nu_1B_{p_1,\theta_1}^{m,k}\cap \nu_2 B_{p_2,\theta_2}^{m,k}).
$$
\end{Lem}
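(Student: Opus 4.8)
The plan is to exploit the extreme-point description of $V^{m,k}_{r,l}$ together with the invariance of the mixed norms under the group $G$. Since the set $\nu_1B_{p_1,\theta_1}^{m,k}\cap\nu_2B_{p_2,\theta_2}^{m,k}$ is convex, and $V^{m,k}_{r,l}=\mathrm{conv}\{\gamma(e):\gamma\in G\}$ by (\ref{vrl_km}), the map $x\mapsto cx$ with $c=\nu_1^{1-\lambda}\nu_2^\lambda r^{-1/p}l^{-1/\theta}$ sends $V^{m,k}_{r,l}$ to $\mathrm{conv}\{c\gamma(e):\gamma\in G\}$, so it suffices to show that each scaled vertex $c\gamma(e)$ lies in $4(\nu_1B_{p_1,\theta_1}^{m,k}\cap\nu_2B_{p_2,\theta_2}^{m,k})$. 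Moreover, each mixed norm $\|\cdot\|_{l_{p_i,\theta_i}^{m,k}}$ is invariant under the action (\ref{gamma_x_def}): sign flips leave absolute values unchanged, the row permutation $\tau_1$ does not affect the inner $l_{p_i}^m$-norms of the columns, and the column permutation $\tau_2$ merely reshuffles those column norms before the outer $l_{\theta_i}^k$-norm is taken. Hence $\|\gamma(e)\|_{l_{p_i,\theta_i}^{m,k}}=\|e\|_{l_{p_i,\theta_i}^{m,k}}$, and it is enough to treat the single point $e$.

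Next I would compute the norms of $e$. By (\ref{eij_kl}), $e$ is the indicator of an $r\times l$ block, so each of its $l$ nonzero columns has $l_{p_i}^m$-norm $r^{1/p_i}$, whence $\|e\|_{l_{p_i,\theta_i}^{m,k}}=r^{1/p_i}l^{1/\theta_i}$ for $i=1,2$. The required inclusion is then equivalent to the two scalar inequalities
$$
\nu_1^{1-\lambda}\nu_2^\lambda r^{-1/p}l^{-1/\theta}\,r^{1/p_i}l^{1/\theta_i}\le 4\nu_i,\qquad i=1,2.
$$

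I would then simplify each inequality using the interpolation identities for $p$ and $\theta$. Since $\tfrac1{p_1}-\tfrac1p=\lambda\bigl(\tfrac1{p_1}-\tfrac1{p_2}\bigr)$ and $\tfrac1{\theta_1}-\tfrac1\theta=\lambda\bigl(\tfrac1{\theta_1}-\tfrac1{\theta_2}\bigr)$, dividing the case $i=1$ by $\nu_1$ and substituting the value of $\nu_1/\nu_2$ from (\ref{n1n2rp1lt1}) collapses the left-hand side to $\bigl(r/\tilde r\bigr)^{\lambda(1/p_1-1/p_2)}\bigl(l/\tilde l\bigr)^{\lambda(1/\theta_1-1/\theta_2)}$; analogously, using $\tfrac1{p_2}-\tfrac1p=-(1-\lambda)\bigl(\tfrac1{p_1}-\tfrac1{p_2}\bigr)$ and the corresponding identity for $\theta$, the case $i=2$ collapses to $\bigl(\tilde r/r\bigr)^{(1-\lambda)(1/p_1-1/p_2)}\bigl(\tilde l/l\bigr)^{(1-\lambda)(1/\theta_1-1/\theta_2)}$. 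The entire role of the relation (\ref{n1n2rp1lt1}) is precisely to cancel $\nu_1/\nu_2$ against the powers of $\tilde r,\tilde l$, leaving only the rounding discrepancies.

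Finally I would bound these residual factors. Because $\tilde r\ge 1$ and $r\in\{\lfloor\tilde r\rfloor,\lceil\tilde r\rceil\}$, one has $\tfrac12\le r/\tilde r\le 2$, and likewise $\tfrac12\le l/\tilde l\le 2$; all the exponents have absolute value at most $1$, since $\lambda,1-\lambda\in[0,1]$ and $\bigl|\tfrac1{p_1}-\tfrac1{p_2}\bigr|\le 1$, $\bigl|\tfrac1{\theta_1}-\tfrac1{\theta_2}\bigr|\le 1$ (as $1/p_i,1/\theta_i\in[0,1]$). Hence each of the two factors is at most $2$, so both products are at most $4$, which is exactly the constant in the statement. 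I do not expect a genuine obstacle here; the only point demanding care is the bookkeeping in the third step, namely checking that the substitution of (\ref{n1n2rp1lt1}) produces exactly the two "rounding-error" expressions, and that the factor $4$ coming from the two independently rounded coordinates matches the constant asserted in the lemma.
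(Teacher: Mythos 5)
Your proof is correct and follows essentially the same route as the paper: the paper's (very terse) proof likewise reduces, via the vertex description of $V^{m,k}_{r,l}$ and the invariance of the mixed norms, to the two scalar inequalities, absorbs the rounding discrepancy $r\mapsto\tilde r$, $l\mapsto\tilde l$ into the factor $4$, and then observes that the resulting inequalities in $\tilde r,\tilde l$ follow (in fact with equality) from (\ref{n1n2rp1lt1}). Your write-up simply makes explicit the steps the paper leaves implicit.
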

\begin{proof}
By (\ref{gamma_x_def})‐‐(\ref{vrl_km}), it suffices to prove that $$\nu_1^{1‐\lambda}\nu_2^{\lambda} \tilde r^{\frac{1}{p_1} ‐\frac 1p} \tilde l ^{\frac{1}{\theta_1}‐\frac{1}{\theta}}\le \nu_1, \;\;\nu_1^{1‐\lambda}\nu_2^{\lambda} \tilde r^{\frac{1}{p_2}‐\frac 1p} \tilde l ^{\frac{1}{\theta_2}‐\frac{1}{\theta}}\le \nu_2.$$ It follows from (\ref{n1n2rp1lt1}).
\end{proof}

\begin{Lem}
\label{lemma01} Let $2\le q<\infty$, $2\le \sigma<\infty$. Then
$$
d_n(\nu_1B^{m,k}_{p_1,\theta_1}\cap \nu_2B^{m,k}_{p_2,\theta_2}, \, l^{m,k}_{q,\sigma}) \underset{q,\sigma}{\gtrsim} \min\{\nu_1, \, \nu_2\}\min\{1, \, n^{‐\frac 12}m^{\frac 1q}k^{\frac{1}{\sigma}}\}.
$$
\end{Lem}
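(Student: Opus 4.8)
The plan is to reduce the lower estimate to the single known quantity $d_n(V^{m,k}_{1,1},\, l^{m,k}_{q,\sigma})$, which is already controlled by Proposition~\ref{v_dn}. The reduction rests on two elementary properties of the Kolmogorov widths that I will use freely: monotonicity with respect to inclusion of the approximated set, and positive homogeneity, $d_n(cM,\, X)=c\,d_n(M,\, X)$ for $c>0$. Since $\min\{\nu_1,\nu_2\}\le \nu_i$, one has $\min\{\nu_1,\nu_2\}B^{m,k}_{p_i,\theta_i}\subset \nu_iB^{m,k}_{p_i,\theta_i}$ for $i=1,2$, so it suffices to exhibit one set $W$ with $W\subset B^{m,k}_{p_1,\theta_1}\cap B^{m,k}_{p_2,\theta_2}$ and then estimate $d_n(W,\, l^{m,k}_{q,\sigma})$ from below; the factor $\min\{\nu_1,\nu_2\}$ will appear automatically.

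First I would take $W=V^{m,k}_{1,1}$. By the definitions (\ref{gamma_x_def})--(\ref{vrl_km}), the generators $\gamma(e^{m,k,1,1})$ are exactly the signed coordinate matrices $\pm e_{i,j}$, so $V^{m,k}_{1,1}$ is the cross-polytope, that is, the unit ball $B^{m,k}_{1,1}$ of $\R^{mk}$. Since the mixed norm $\|\cdot\|_{l^{m,k}_{p,\theta}}$ is nonincreasing in each of $p$ and $\theta$, for every $p\ge 1$ and $\theta\ge 1$ one has $\|x\|_{l^{m,k}_{p,\theta}}\le \|x\|_{l^{m,k}_{1,1}}$, whence $V^{m,k}_{1,1}=B^{m,k}_{1,1}\subset B^{m,k}_{p_i,\theta_i}$ for $i=1,2$. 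Combining this with the previous paragraph gives the key inclusion $\min\{\nu_1,\nu_2\}V^{m,k}_{1,1}\subset \nu_1B^{m,k}_{p_1,\theta_1}\cap\nu_2B^{m,k}_{p_2,\theta_2}$, and therefore $d_n(\nu_1B^{m,k}_{p_1,\theta_1}\cap\nu_2B^{m,k}_{p_2,\theta_2},\, l^{m,k}_{q,\sigma})\ge \min\{\nu_1,\nu_2\}\,d_n(V^{m,k}_{1,1},\, l^{m,k}_{q,\sigma})$.

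It then remains to insert $r=l=1$ into Proposition~\ref{v_dn}. There the factor $r^{1/q}l^{1/\sigma}$ equals $1$, the second expression reduces to $n^{-\frac12}m^{\frac1q}k^{\frac1\sigma}$, and the threshold $m^{\frac2q}k^{\frac2\sigma}r^{1-\frac2q}l^{1-\frac2\sigma}$ becomes $m^{\frac2q}k^{\frac2\sigma}$, so the two-regime bound collapses to $d_n(V^{m,k}_{1,1},\, l^{m,k}_{q,\sigma})\underset{q,\sigma}{\gtrsim}\min\{1,\, n^{-\frac12}m^{\frac1q}k^{\frac1\sigma}\}$; together with the displayed inequality this is exactly the claim. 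I do not expect a genuine obstacle here, the proof being essentially a reduction; the only point requiring a moment of care is verifying that the threshold $n=m^{\frac2q}k^{\frac2\sigma}$ separating the two regimes of Proposition~\ref{v_dn} is precisely where $n^{-\frac12}m^{\frac1q}k^{\frac1\sigma}$ crosses the value $1$, so that the case split of the proposition matches the single expression $\min\{1,\, n^{-\frac12}m^{\frac1q}k^{\frac1\sigma}\}$, and noting that the inclusion carries the sharp factor $\min\{\nu_1,\nu_2\}$ with no loss of constants.
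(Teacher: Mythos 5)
Your proof is correct and follows essentially the same route as the paper: the inclusion $\min\{\nu_1,\nu_2\}V^{m,k}_{1,1}\subset \nu_1B^{m,k}_{p_1,\theta_1}\cap\nu_2B^{m,k}_{p_2,\theta_2}$ combined with Proposition~\ref{v_dn} at $r=l=1$. You merely spell out the details the paper leaves implicit (the identification of $V^{m,k}_{1,1}$ with the cross-polytope and the matching of the two regimes of Proposition~\ref{v_dn} with the single expression $\min\{1,\,n^{-\frac12}m^{\frac1q}k^{\frac1\sigma}\}$), and these verifications are accurate.
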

\begin{proof}
From the inclusion $\min\{\nu_1, \, \nu_2\} V^{m,k}_{1,1} \subset \nu_1B^{m,k}_{p_1,\theta_1}\cap \nu_2B^{m,k}_{p_2,\theta_2}$ and Proposition \ref{v_dn} it follows that
$$
d_n(\nu_1B^{m,k}_{p_1,\theta_1}\cap \nu_2B^{m,k}_{p_2,\theta_2}, \, l^{m,k}_{q,\sigma})\ge 
$$
$$
\ge d_n(\min\{\nu_1, \, \nu_2\} V^{m,k}_{1,1}, \, l^{m,k}_{q,\sigma}) \underset{q,\sigma}{\gtrsim} \min\{\nu_1, \, \nu_2\}\min\{1, \, n^{‐\frac 12}m^{\frac 1q}k^{\frac{1}{\sigma}}\}.
$$
This completes the proof.
\end{proof}

\begin{Lem}
\label{lemma02}
Let $2\le q<\infty$, $2\le \sigma <\infty$.
\begin{enumerate}
\item Let $q>2$, $m^{\frac 2q} k^{\frac{2}{\sigma}}\le n\le mk^{\frac{2}{\sigma}}$, 
\begin{align}
\label{min1}
\frac{\nu_1}{\nu_2} \le (n^{\frac 12}m^{‐\frac 1q} k^{‐\frac{1}{\sigma}})^{\frac{1/p_1‐1/p_2}{1/2‐1/q}}.
\end{align}
Then
\begin{align}
\label{dn_nu1}
d_n(\nu_1B^{m,k}_{p_1,\theta_1}\cap \nu_2B^{m,k}_{p_2,\theta_2}, \, l^{m,k}_{q,\sigma}) \underset{q,\sigma}{\gtrsim} \nu_1(n^{‐\frac 12} m^{\frac 1q} k^{\frac{1}{\sigma}}) ^{\frac{1/p_1‐1/q}{1/2‐1/q}}.
\end{align}
\item Let $\sigma>2$, $m^{\frac 2q} k^{\frac{2}{\sigma}}\le n\le m^{\frac 2q}k$, 
$$
\frac{\nu_1}{\nu_2} \le (n^{\frac 12}m^{‐\frac 1q} k^{‐\frac{1}{\sigma}})^{\frac{1/\theta_1‐1/\theta_2}{1/2‐1/\sigma}}.
$$
Then
$$
d_n(\nu_1B^{m,k}_{p_1,\theta_1}\cap \nu_2B^{m,k}_{p_2,\theta_2}, \, l^{m,k}_{q,\sigma}) \underset{q,\sigma}{\gtrsim} \nu_1(n^{‐\frac 12} m^{\frac 1q} k^{\frac{1}{\sigma}}) ^{\frac{1/\theta_1‐1/\sigma}{1/2‐1/\sigma}}.
$$
\end{enumerate}
\end{Lem}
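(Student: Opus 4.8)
The two parts are symmetric under interchanging the roles of the two indices ($m\leftrightarrow k$, $q\leftrightarrow\sigma$, $p_i\leftrightarrow\theta_i$, $V^{m,k}_{r,1}\leftrightarrow V^{m,k}_{1,l}$), so I would prove part~1 in detail and read off part~2. The plan is to place a scaled copy of $V^{m,k}_{r,1}$ inside the intersection and apply Proposition~\ref{v_dn}. The key bookkeeping observation is that both (\ref{min1}) and (\ref{dn_nu1}) are controlled by the single quantity $\tilde r_0:=\bigl(n^{\frac 12}m^{-\frac 1q}k^{-\frac{1}{\sigma}}\bigr)^{\frac{1}{1/2-1/q}}$. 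Indeed, since $n^{-\frac 12}m^{\frac 1q}k^{\frac{1}{\sigma}}=\tilde r_0^{-(1/2-1/q)}$, the exponent in (\ref{dn_nu1}) collapses to $(n^{-\frac 12}m^{\frac 1q}k^{\frac{1}{\sigma}})^{\frac{1/p_1-1/q}{1/2-1/q}}=\tilde r_0^{\,1/q-1/p_1}$, while (\ref{min1}) reads exactly $\nu_1/\nu_2\le\tilde r_0^{\,1/p_1-1/p_2}$.

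First I would verify $\tilde r_0\in[1,m]$: the bound $n\ge m^{\frac 2q}k^{\frac{2}{\sigma}}$ gives $\tilde r_0\ge1$, and $n\le mk^{\frac{2}{\sigma}}$ gives $\tilde r_0\le m$. Put $r_0=\lfloor\tilde r_0\rfloor$, so $r_0\in[1,m]$ and $r_0\asymp\tilde r_0$. To apply Lemma~\ref{incl_1} I would take $\lambda=0$ (so $p=p_1$, $\theta=\theta_1$), $\tilde l=l=1$, $\tilde r=\tilde r_0$, and --- this is the crucial move --- replace $\nu_2$ by the smaller weight $\nu_2':=\nu_1\tilde r_0^{\,1/p_2-1/p_1}$. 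By (\ref{min1}) one has $\nu_2'\le\nu_2$, hence $\nu_1B^{m,k}_{p_1,\theta_1}\cap\nu_2'B^{m,k}_{p_2,\theta_2}\subseteq\nu_1B^{m,k}_{p_1,\theta_1}\cap\nu_2B^{m,k}_{p_2,\theta_2}$; moreover $\nu_1/\nu_2'=\tilde r_0^{\,1/p_1-1/p_2}$, so the balance condition (\ref{n1n2rp1lt1}) holds for the pair $(\nu_1,\nu_2')$. Lemma~\ref{incl_1} (with $\nu_2$ replaced by $\nu_2'$) then gives $\nu_1 r_0^{-1/p_1}V^{m,k}_{r_0,1}\subseteq 4\bigl(\nu_1B^{m,k}_{p_1,\theta_1}\cap\nu_2'B^{m,k}_{p_2,\theta_2}\bigr)\subseteq 4\bigl(\nu_1B^{m,k}_{p_1,\theta_1}\cap\nu_2B^{m,k}_{p_2,\theta_2}\bigr)$.

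By monotonicity and homogeneity of the widths this yields $d_n(\nu_1B^{m,k}_{p_1,\theta_1}\cap\nu_2B^{m,k}_{p_2,\theta_2},\,l^{m,k}_{q,\sigma})\ge\tfrac14\nu_1 r_0^{-1/p_1}\,d_n(V^{m,k}_{r_0,1},\,l^{m,k}_{q,\sigma})$. Since $r_0\le\tilde r_0$ and $q>2$, a direct check gives $m^{\frac 2q}k^{\frac{2}{\sigma}}r_0^{1-\frac 2q}\le m^{\frac 2q}k^{\frac{2}{\sigma}}\tilde r_0^{1-\frac 2q}=n$, so the second alternative of Proposition~\ref{v_dn} applies and $d_n(V^{m,k}_{r_0,1},\,l^{m,k}_{q,\sigma})\underset{q,\sigma}{\gtrsim} n^{-\frac 12}m^{\frac 1q}k^{\frac{1}{\sigma}}r_0^{\frac 12}\asymp r_0^{1/q}$, the last equivalence using $n\asymp m^{\frac 2q}k^{\frac{2}{\sigma}}r_0^{1-\frac 2q}$. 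Collecting the estimates, the lower bound is $\underset{q,\sigma}{\gtrsim}\nu_1 r_0^{\,1/q-1/p_1}\asymp\nu_1\tilde r_0^{\,1/q-1/p_1}=\nu_1\bigl(n^{-\frac 12}m^{\frac 1q}k^{\frac{1}{\sigma}}\bigr)^{\frac{1/p_1-1/q}{1/2-1/q}}$, which is precisely (\ref{dn_nu1}).

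The only real obstacle is the discrepancy between the inequality in (\ref{min1}) and the equality demanded by the balance condition (\ref{n1n2rp1lt1}); I would resolve it exactly as above, by shrinking $\nu_2$ to $\nu_2'$, which is admissible because a smaller second ball produces a smaller intersection and hence a no-larger width. The remaining points are routine: controlling the rounding so that $r_0\asymp\tilde r_0$ (absorbing constants such as $2^{|1/p_1-1/p_2|}$ into the $\asymp$-constants), and noting that the choice $(\tilde r,\tilde l)=(\tilde r_0,1)$ lands on the boundary between the two regimes of Proposition~\ref{v_dn}, where the two estimates agree. Part~2 is obtained verbatim with $(\tilde r,\tilde l)=(1,\tilde l_0)$, where $\tilde l_0=\bigl(n^{\frac 12}m^{-\frac 1q}k^{-\frac{1}{\sigma}}\bigr)^{\frac{1}{1/2-1/\sigma}}\in[1,k]$, the membership being guaranteed by $m^{\frac 2q}k^{\frac{2}{\sigma}}\le n\le m^{\frac 2q}k$.
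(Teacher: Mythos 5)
Your proof is correct and follows essentially the same route as the paper: both define $r$ from the same quantity $(n^{\frac12}m^{-\frac1q}k^{-\frac{1}{\sigma}})^{\frac{1}{1/2-1/q}}$, embed a scaled copy of $V^{m,k}_{r,1}$ into the intersection, and conclude via Proposition \ref{v_dn}. The only (immaterial) differences are that the paper rounds up and verifies the inclusion $\nu_1 r^{-1/p_1}V^{m,k}_{r,1}\subset 2(\nu_1B^{m,k}_{p_1,\theta_1}\cap\nu_2B^{m,k}_{p_2,\theta_2})$ directly, then uses the first alternative of Proposition \ref{v_dn}, whereas you round down, route the inclusion through Lemma \ref{incl_1} with the auxiliary weight $\nu_2'$, and use the second alternative.
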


\begin{proof}
We prove assertion 1 (assertion 2 is similar).

We set
\begin{align}
\label{02rdef}
r = \left\lceil(n^{\frac 12}m^{‐\frac 1q}k^{‐\frac{1}{\sigma}})^{\frac{1}{\frac 12‐\frac 1q}}\right\rceil.
\end{align}
Since $m^{\frac 2q} k^{\frac{2}{\sigma}}\le n\le mk^{\frac{2}{\sigma}}$, we have $1\le r\le m$. We claim that
\begin{align}
\label{vkl1}
\nu_1 r^{‐\frac{1}{p_1}} V^{m,k}_{r,1} \subset 2(\nu_1B^{m,k}_{p_1,\theta_1}\cap \nu_2B^{m,k}_{p_2,\theta_2}).
\end{align}
It suffices to check that $\nu_1 r^{\frac{1}{p_2}‐\frac{1}{p_1}}\le 2\nu_2$ (see (\ref{eij_kl}), (\ref{vrl_km})). It follows from (\ref{min1}) and (\ref{02rdef}).

By (\ref{02rdef}), we have $n\le m^{\frac 2q}k^{\frac{2}{\sigma}} r^{1‐\frac{2}{q}}$. Hence
$$
d_n(\nu_1B^{m,k}_{p_1,\theta_1}\cap \nu_2B^{m,k}_{p_2,\theta_2}, \, l^{m,k}_{q,\sigma}) \stackrel{(\ref{vkl1})}{\gtrsim} \nu_1 r^{‐\frac{1}{p_1}}d_n(V_{r,1}^{m,k}, \, l_{q,\sigma}^{m,k}) 
\stackrel{(\ref{dn_vmk1})}{\underset{q,\sigma}{\gtrsim}} \nu_1 r^{\frac 1q‐\frac{1}{p_1}};
$$
this together with (\ref{02rdef}) implies (\ref{dn_nu1}).
\end{proof}

\begin{Lem}
\label{lemma03} Let $2\le q<\infty$, $2\le \sigma <\infty$.
\begin{enumerate}
\item Let $\sigma>2$, $mk^{\frac{2}{\sigma}} \le n \le \frac{mk}{2}$, 
\begin{align}
\label{min2} \frac{\nu_1}{\nu_2} \le m^{\frac{1}{p_1} ‐\frac{1}{p_2}} (n^{\frac 12}m^{‐\frac 12} k^{‐\frac{1}{\sigma}}) ^{\frac{1/\theta_1‐1/\theta_2}{1/2‐1/\sigma}}.
\end{align}
Then
\begin{align}
\label{dn_nu11}
d_n(\nu_1B^{m,k}_{p_1,\theta_1}\cap \nu_2B^{m,k}_{p_2,\theta_2}, \, l^{m,k}_{q,\sigma}) \underset{q,\sigma}{\gtrsim} \nu_1 m^{\frac 1q‐\frac{1}{p_1}}(n^{‐\frac 12} m^{\frac 12} k^{\frac{1}{\sigma}}) ^{\frac{1/\theta_1‐1/\sigma}{1/2‐1/\sigma}}.
\end{align}

\item Let $q>2$, $m^{\frac 2q}k\le n \le \frac{mk}{2}$, 
$$
\frac{\nu_1}{\nu_2} \le k^{\frac{1}{\theta_1} ‐\frac{1}{\theta_2}} (n^{\frac 12}m^{‐\frac 1q} k^{‐\frac{1}{2}}) ^{\frac{1/p_1‐1/p_2}{1/2‐1/q}}.
$$
Then
$$
d_n(\nu_1B^{m,k}_{p_1,\theta_1}\cap \nu_2B^{m,k}_{p_2,\theta_2}, \, l^{m,k}_{q,\sigma}) \underset{q,\sigma}{\gtrsim} \nu_1 k^{\frac{1}{\sigma}‐\frac{1}{\theta_1}}(n^{‐\frac 12} m^{\frac 1q} k^{\frac{1}{2}}) ^{\frac{1/p_1‐1/q}{1/2‐1/q}}.
$$
\end{enumerate}
\end{Lem}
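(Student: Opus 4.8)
The plan is to prove assertion~1 and to deduce assertion~2 from it by the symmetry interchanging $m\leftrightarrow k$, $(p_i,q)\leftrightarrow(\theta_i,\sigma)$ (transposing the matrix); the two arguments are completely parallel. For assertion~1 I would follow the scheme of Lemma~\ref{lemma02}, but now exhaust the first index fully and vary the second. Set
\[
\tilde l = (n^{1/2}m^{-1/2}k^{-1/\sigma})^{\frac{1}{1/2-1/\sigma}}, \qquad l = \lceil \tilde l\rceil,
\]
and work with the body $V^{m,k}_{m,l}$. The assumption $mk^{2/\sigma}\le n\le \frac{mk}{2}$ is exactly what guarantees $1\le l\le k$: the lower bound on $n$ forces $\tilde l\ge 1$, while the upper bound $n\le\frac{mk}{2}$ (with the factor $\frac12$, not merely $n\le mk$) leaves enough room for $l=\lceil\tilde l\rceil\le k$.

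Next I would establish the inclusion
\[
\nu_1 m^{-1/p_1} l^{-1/\theta_1} V^{m,k}_{m,l} \subset 2\bigl(\nu_1 B^{m,k}_{p_1,\theta_1}\cap \nu_2 B^{m,k}_{p_2,\theta_2}\bigr).
\]
As in Lemma~\ref{incl_1}, it suffices to check the claim on the generating vertex $e=e^{m,k,m,l}$, for which $\|e\|_{l^{m,k}_{p_1,\theta_1}}=m^{1/p_1}l^{1/\theta_1}$ and $\|e\|_{l^{m,k}_{p_2,\theta_2}}=m^{1/p_2}l^{1/\theta_2}$. The first norm shows the scaled vertex lies on the boundary of $\nu_1 B^{m,k}_{p_1,\theta_1}$, so only the second ball needs attention: one requires $\nu_1 m^{1/p_2-1/p_1}l^{1/\theta_2-1/\theta_1}\le 2\nu_2$, which via $\tilde l\le l\le 2\tilde l$ reduces to the assumed bound~(\ref{min2}). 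This is the one point requiring care, because the exponent $1/\theta_1-1/\theta_2$ can have either sign; I would therefore split the passage from $\tilde l$ to $l$ into two cases, noting that the rounding factor stays bounded below by an absolute constant since $|1/\theta_1-1/\theta_2|\le 1$.

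Then I would apply Proposition~\ref{v_dn}. The choice of $\tilde l$ is engineered to hit its threshold exactly: using the identity $\frac{1-2/\sigma}{1/2-1/\sigma}=2$ one finds $mk^{2/\sigma}\tilde l^{\,1-2/\sigma}=n$, hence $n\le m^{2/q}k^{2/\sigma}m^{1-2/q}l^{1-2/\sigma}$, placing us in the first branch of~(\ref{dn_vmk1}) and giving $d_n(V^{m,k}_{m,l},\,l^{m,k}_{q,\sigma})\underset{q,\sigma}{\gtrsim}m^{1/q}l^{1/\sigma}$. Combining with the inclusion and the monotonicity and homogeneity of the widths yields
\[
d_n(\nu_1B^{m,k}_{p_1,\theta_1}\cap\nu_2B^{m,k}_{p_2,\theta_2},\,l^{m,k}_{q,\sigma})\underset{q,\sigma}{\gtrsim}\nu_1 m^{1/q-1/p_1} l^{1/\sigma-1/\theta_1}.
\]
Substituting $l\asymp\tilde l$ (again splitting on the sign of $1/\sigma-1/\theta_1$, with absolute constants) turns $l^{1/\sigma-1/\theta_1}$ into $(n^{-1/2}m^{1/2}k^{1/\sigma})^{\frac{1/\theta_1-1/\sigma}{1/2-1/\sigma}}$, which is precisely~(\ref{dn_nu11}). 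The argument is in essence bookkeeping built on Lemma~\ref{lemma02} and Proposition~\ref{v_dn}; the only genuinely delicate points are the boundary range check $l\le k$, which leans on $n\le\frac{mk}{2}$, and keeping all implied constants independent of $p_i,\theta_i$ when rounding $\tilde l$ to $l$.
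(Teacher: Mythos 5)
Your proof is correct and follows essentially the same route as the paper: the same choice $\tilde l = (n^{1/2}m^{-1/2}k^{-1/\sigma})^{\frac{1}{1/2-1/\sigma}}$, the same inclusion $\nu_1 m^{-1/p_1} l^{-1/\theta_1} V^{m,k}_{m,l} \subset 2(\nu_1 B^{m,k}_{p_1,\theta_1}\cap \nu_2 B^{m,k}_{p_2,\theta_2})$ verified via (\ref{min2}), the same application of the first branch of Proposition \ref{v_dn}, and the same symmetry argument for assertion 2. The only quibble is that $l=\lceil\tilde l\rceil\le k$ already follows from $n\le mk$ (since $k$ is an integer), so the factor $\tfrac12$ is not actually needed there, but this does not affect correctness.
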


\begin{proof}
We prove assertion 1 (assertion 2 is similar).

Let 
\begin{align}
\label{03ldef}
l =\left\lceil \left(n^{\frac 12}m^{‐\frac 12}k^{‐\frac{1}{\sigma}}\right)^{\frac{1}{\frac 12‐\frac{1}{\sigma}}}\right\rceil.
\end{align}
Since $mk^{2/\sigma}\le n \le \frac{mk}{2}$, we have $1\le l\le k$.

We prove that
\begin{align}
\label{vkl2} \nu_1 m^{‐\frac{1}{p_1}} l^{‐\frac{1}{\theta_1}} V^{m,k}_{m,l} \subset 2(\nu_1B^{m,k}_{p_1,\theta_1}\cap \nu_2B^{m,k}_{p_2,\theta_2}).
\end{align}
It suffices to check that
$$
\nu_1 m^{\frac{1}{p_2}‐\frac{1}{p_1}} l^{\frac{1}{\theta_2}‐\frac{1}{\theta_1}} \le 2\nu_2.
$$
It follows from (\ref{min2}) and (\ref{03ldef}).

By (\ref{03ldef}), we have $n\le m^{\frac 2q}k^{\frac{2}{\sigma}} m^{1‐\frac 2q}l^{1‐\frac{2}{\sigma}}$.

Now we obtain
$$
d_n(\nu_1B^{m,k}_{p_1,\theta_1}\cap \nu_2B^{m,k}_{p_2,\theta_2}, \, l^{m,k}_{q,\sigma}) \stackrel{(\ref{vkl2})}{\gtrsim} \nu_1 m^{‐\frac{1}{p_1}} l^{‐\frac{1}{\theta_1}}d_n(V_{m,l}^{m,k}, \, l^{m,k}_{q,\sigma})\stackrel{(\ref{dn_vmk1})}{\underset{q,\sigma}{\gtrsim}} \nu_1m^{\frac 1q‐\frac{1}{p_1}}l^{\frac{1}{\sigma}‐\frac{1}{\theta_1}}.
$$
This together with (\ref{03ldef}) implies (\ref{dn_nu11}).
\end{proof}

\begin{Lem}
\label{lemma04} Let $q>2$, $\sigma>2$,
\begin{align}
\label{n_mk_mk} m^{2/q}k^{2/\sigma} \le n\le \min\{mk^{2/\sigma}, \, m^{2/q}k\},
\end{align}
\begin{align}
\label{nu1nu2int} (n^{\frac 12} m^{‐\frac 1q} k^{‐\frac{1}{\sigma}})^{\frac{1/p_1‐1/p_2} {1/2‐1/q}} \le \frac{\nu_1}{\nu_2} \le (n^{\frac 12} m^{‐\frac 1q} k^{‐\frac{1}{\sigma}})^{\frac{1/\theta_1‐1/\theta_2}{1/2‐1/\sigma}}
\end{align}
or
\begin{align}
\label{nu1nu2int1} (n^{\frac 12} m^{‐\frac 1q} k^{‐\frac{1}{\sigma}})^{\frac{1/\theta_1‐1/\theta_2}{1/2‐1/\sigma}} \le \frac{\nu_1}{\nu_2} \le (n^{\frac 12} m^{‐\frac 1q} k^{‐\frac{1}{\sigma}})^{\frac{1/p_1‐1/p_2} {1/2‐1/q}},
\end{align}
$\lambda \in [0, \, 1]$, $\frac 1p = \frac{1‐\lambda}{p_1} + \frac{\lambda}{p_2}$, $\frac{1}{\theta} = \frac{1‐\lambda}{\theta_1} + \frac{\lambda}{\theta_2}$, 
\begin{align}
\label{pq_eq_ts}
\frac{1/p‐1/q}{1/2‐1/q} = \frac{1/\theta ‐1/\sigma}{1/2 ‐1/\sigma}.
\end{align}
Then
\begin{align}
\label{p_th}
d_n(\nu_1B^{m,k}_{p_1,\theta_1} \cap \nu_2 B_{p_2,\theta_2}^{m,k}, \, l^{m,k}_{q,\sigma}) \underset{q,\sigma}{\gtrsim} \nu_1^{1‐\lambda}\nu_2^\lambda (n^{‐\frac 12} m^{\frac 1q} k^{\frac{1}{\sigma}})^{\frac{1/p‐1/q}{1/2‐1/q}} \stackrel{(\ref{1234})}{=} \nu_1^{1‐\lambda}\nu_2^\lambda (n^{‐\frac 12} m^{\frac 1q} k^{\frac{1}{\sigma}})^{\frac{1/\theta‐1/\sigma}{1/2‐1/\sigma}}.
\end{align}
\end{Lem}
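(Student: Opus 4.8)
The plan is to bound the width from below by embedding a suitably scaled copy of a set $V^{m,k}_{r,l}$ into the intersection via Lemma \ref{incl_1}, and then invoking Proposition \ref{v_dn}. The whole difficulty is to choose the integers $r,l$ (equivalently the real numbers $\tilde r,\tilde l$) so that two requirements hold simultaneously: the inclusion condition (\ref{n1n2rp1lt1}), which ties $\tilde r,\tilde l$ to the prescribed ratio $\nu_1/\nu_2$, and the requirement that the pair $(r,l)$ sit at the critical scale of Proposition \ref{v_dn}, i.e. $n\asymp m^{2/q}k^{2/\sigma}r^{1-2/q}l^{1-2/\sigma}$, where the two branches of (\ref{dn_vmk1}) coincide and both give $d_n(V^{m,k}_{r,l},l^{m,k}_{q,\sigma})\gtrsim r^{1/q}l^{1/\sigma}$.

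Concretely, I would set $\beta=n^{1/2}m^{-1/q}k^{-1/\sigma}$, so that $\beta\ge 1$ by the left inequality in (\ref{n_mk_mk}) and $\beta^2=nm^{-2/q}k^{-2/\sigma}$, and look for $\tilde r,\tilde l$ of the form $\tilde r=\beta^{s/(1/2-1/q)}$, $\tilde l=\beta^{t/(1/2-1/\sigma)}$ with $s+t=1$ and $s,t\in[0,1]$. The normalization $s+t=1$ gives exactly $\tilde r^{1-2/q}\tilde l^{1-2/\sigma}=\beta^{2(s+t)}=\beta^2$, i.e. the critical relation $n=m^{2/q}k^{2/\sigma}\tilde r^{1-2/q}\tilde l^{1-2/\sigma}$. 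Writing $A=\frac{1/p_1-1/p_2}{1/2-1/q}$ and $B=\frac{1/\theta_1-1/\theta_2}{1/2-1/\sigma}$, the inclusion condition (\ref{n1n2rp1lt1}) becomes $\beta^{As+Bt}=\nu_1/\nu_2$; since by (\ref{nu1nu2int}) or (\ref{nu1nu2int1}) the number $\log_\beta(\nu_1/\nu_2)$ lies between $A$ and $B$, the linear system $s+t=1$, $As+Bt=\log_\beta(\nu_1/\nu_2)$ has a solution with $s,t\in[0,1]$. Then $\tilde r,\tilde l\ge 1$, while the upper bounds $\tilde r\le m$, $\tilde l\le k$ follow from the two upper bounds in (\ref{n_mk_mk}); I would take $r=\lceil\tilde r\rceil$ (or $\lfloor\tilde r\rfloor$) and $l=\lceil\tilde l\rceil$ (or $\lfloor\tilde l\rfloor$), so that $r\asymp\tilde r$, $l\asymp\tilde l$ and $n\asymp m^{2/q}k^{2/\sigma}r^{1-2/q}l^{1-2/\sigma}$.

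With this choice, either branch of (\ref{dn_vmk1}) yields $d_n(V^{m,k}_{r,l},l^{m,k}_{q,\sigma})\underset{q,\sigma}{\gtrsim}r^{1/q}l^{1/\sigma}$ (in the second branch one substitutes $n\le C(q,\sigma)m^{2/q}k^{2/\sigma}r^{1-2/q}l^{1-2/\sigma}$ and the factors involving $m,k,r,l$ recombine to $r^{1/q}l^{1/\sigma}$). Combining this with Lemma \ref{incl_1} and with the homogeneity and monotonicity of the widths gives
\[
d_n(\nu_1B^{m,k}_{p_1,\theta_1}\cap\nu_2B^{m,k}_{p_2,\theta_2},\,l^{m,k}_{q,\sigma})\underset{q,\sigma}{\gtrsim}\nu_1^{1-\lambda}\nu_2^\lambda r^{-1/p}l^{-1/\theta}\cdot r^{1/q}l^{1/\sigma}=\nu_1^{1-\lambda}\nu_2^\lambda r^{1/q-1/p}l^{1/\sigma-1/\theta},
\]
where $p,\theta,\lambda$ are the data fixed in the statement, so that the inclusion of Lemma \ref{incl_1} applies with exactly these exponents.

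It remains to simplify the right-hand side. Using $r\asymp\tilde r$, $l\asymp\tilde l$ and the explicit forms of $\tilde r,\tilde l$, one gets $r^{1/q-1/p}l^{1/\sigma-1/\theta}\asymp\beta^{-\frac{1/p-1/q}{1/2-1/q}s-\frac{1/\theta-1/\sigma}{1/2-1/\sigma}t}$; here the hypothesis (\ref{pq_eq_ts}) is decisive, for it makes the two exponents equal to a common value $\alpha=\lambda_{p,q}=\lambda_{\theta,\sigma}$, whence the expression collapses to $\beta^{-\alpha(s+t)}=\beta^{-\alpha}=(n^{-1/2}m^{1/q}k^{1/\sigma})^{\frac{1/p-1/q}{1/2-1/q}}$, independently of the particular split $s+t=1$. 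This is precisely the asserted bound (\ref{p_th}). The main obstacle is the simultaneous feasibility of the two constraints on $(\tilde r,\tilde l)$: one must check that the solution of the $2\times 2$ system automatically lands in the admissible box $[1,m]\times[1,k]$, and this is exactly what the interval hypotheses (\ref{nu1nu2int})/(\ref{nu1nu2int1}) together with the range (\ref{n_mk_mk}) of $n$ guarantee.
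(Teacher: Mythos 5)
Your proposal is correct and is essentially the paper's own proof: the paper sets $\tilde r=(n^{1/2}m^{-1/q}k^{-1/\sigma})^{\frac{1-\alpha}{1/2-1/q}}$, $\tilde l=(n^{1/2}m^{-1/q}k^{-1/\sigma})^{\frac{\alpha}{1/2-1/\sigma}}$ (your $s=1-\alpha$, $t=\alpha$ with $s+t=1$), chooses $\alpha$ so that (\ref{n1n2rp1lt1}) holds using (\ref{nu1nu2int}) or (\ref{nu1nu2int1}), takes $r=\lceil\tilde r\rceil$, $l=\lceil\tilde l\rceil$ so that $n\le m^{2/q}k^{2/\sigma}r^{1-2/q}l^{1-2/\sigma}$, and concludes via Lemma \ref{incl_1}, Proposition \ref{v_dn}, and the exponent identity (\ref{pq_eq_ts})/(\ref{1234}), exactly as you do. The only cosmetic differences are your slightly more explicit verification of the feasibility of the linear system and of the bounds $\tilde r\le m$, $\tilde l\le k$, and your observation that both branches of (\ref{dn_vmk1}) agree at the critical scale (the paper avoids this by always rounding up, which places it in the first branch).
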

\begin{proof}
We set
\begin{align}
\label{tiltalphtilr}
\tilde r = (n^{\frac 12}m^{‐\frac 1q} k^{‐\frac{1}{\sigma}})^{\frac{1‐\alpha}{1/2‐1/q}}, \quad \tilde l = (n^{\frac 12}m^{‐\frac 1q} k^{‐\frac{1}{\sigma}})^{\frac{\alpha}{1/2‐1/\sigma}},
\end{align}
where $\alpha \in [0, \, 1]$. By (\ref{n_mk_mk}), we have $\tilde r \in [1, \, m]$, $\tilde l\in [1, \, k]$. We choose $\alpha$ such that (\ref{n1n2rp1lt1}) holds; it exists by (\ref{nu1nu2int}) or (\ref{nu1nu2int1}).

Let $r= \lceil \tilde r\rceil$, $l =\lceil \tilde l \rceil$, $W=\nu_1^{1‐\lambda} \nu_2^\lambda r^{‐\frac{1}{p}}l^{‐\frac{1}{\theta}} V_{r,l}^{m,k}$. By Lemma \ref{incl_1},
\begin{align}
\label{w_sbst_4}
W \subset 4(\nu_1 B^{m,k}_{p_1,\theta_1} \cap \nu_2 B^{m,k}_{p_2,\theta_2}).
\end{align}

From (\ref{tiltalphtilr}) it follows that $n\le m^{\frac 2q} k^{\frac{2}{\sigma}} r^{1‐\frac 2q} l^{1‐\frac{2}{\sigma}}$. Hence
$$
d_n(\nu_1B^{m,k}_{p_1,\theta_1} \cap \nu_2 B_{p_2,\theta_2}^{m,k}, \, l^{m,k}_{q,\sigma}) \stackrel{(\ref{w_sbst_4})}{\gtrsim} d_n(W, \, l^{m,k}_{q,\sigma}) \stackrel{(\ref{dn_vmk1})}{\underset{q,\sigma}{\gtrsim}} \nu_1^{1‐\lambda} \nu_2^\lambda r^{1/q‐1/p} l^{1/\sigma ‐1/\theta}.
$$
This together with (\ref{1234}), (\ref{pq_eq_ts}), (\ref{tiltalphtilr}) yields (\ref{p_th}).
\end{proof}

\begin{Lem}
\label{lemma05} Let $q>2$, $\sigma>2$, $1\le p_i\le \infty$, $1\le \theta_i\le \infty$, $i=1, \, 2$, $\lambda \in [0, \, 1]$, $\frac 1p=\frac{1‐\lambda}{p_1} + \frac{\lambda}{p_2}$, $\frac{1}{\theta} = \frac{1‐\lambda}{\theta_1} + \frac{\lambda}{\theta_2}$, 
\begin{align}
\label{lpq_lts_eq}
\frac{1/p‐1/q}{1/2‐1/q} = \frac{1/\theta‐1/\sigma}{1/2‐1/\sigma}. 
\end{align}
Let one of the following conditions hold:
\begin{enumerate}
\item $km^{\frac 2q}\le n \le mk^{\frac{2}{\sigma}}$, 
\begin{align}
\label{nu1nu2lege1} \begin{array}{c}(n^{\frac 12} m^{‐\frac 1q} k^{‐\frac{1}{\sigma}}) ^{\frac{1/p_1‐1/p_2}{1/2‐1/q}}\le \frac{\nu_1}{\nu_2} \le k^{\frac{1}{\theta_1}‐\frac{1}{\theta_2}} (n^{\frac 12} m^{‐\frac 1q} k^{‐\frac{1}{2}}) ^{\frac{1/p_1‐1/p_2}{1/2‐1/q}} \quad \text{or} \\  k^{\frac{1}{\theta_1}‐\frac{1}{\theta_2}} (n^{\frac 12} m^{‐\frac 1q} k^{‐\frac{1}{2}}) ^{\frac{1/p_1‐1/p_2}{1/2‐1/q}} \le \frac{\nu_1}{\nu_2} \le (n^{\frac 12} m^{‐\frac 1q} k^{‐\frac{1}{\sigma}}) ^{\frac{1/p_1‐1/p_2}{1/2‐1/q}}; \end{array}
\end{align}
\item $mk^{\frac{2}{\sigma}}\le n \le km^{\frac 2q}$, 
$$ \begin{array}{c} (n^{\frac 12} m^{‐\frac 1q} k^{‐\frac{1}{\sigma}}) ^{\frac{1/\theta_1‐1/\theta_2}{1/2‐1/\sigma}}\le \frac{\nu_1}{\nu_2} \le m^{\frac{1}{p_1}‐\frac{1}{p_2}} (n^{\frac 12} m^{‐\frac{1}{2}} k^{‐\frac{1}{\sigma}}) ^{\frac{1/\theta_1‐1/\theta_2}{1/2‐1/\sigma}} \quad \text{or} \\ m^{\frac{1}{p_1}‐\frac{1}{p_2}} (n^{\frac 12} m^{‐\frac{1}{2}} k^{‐\frac{1}{\sigma}}) ^{\frac{1/\theta_1‐1/\theta_2}{1/2‐1/\sigma}} \le \frac{\nu_1}{\nu_2} \le (n^{\frac 12} m^{‐\frac 1q} k^{‐\frac{1}{\sigma}}) ^{\frac{1/\theta_1‐1/\theta_2}{1/2‐1/\sigma}}.\end{array}
$$
\end{enumerate}
Then
$$
d_n(\nu_1B^{m,k}_{p_1,\theta_1} \cap \nu_2 B^{m,k}_{p_2,\theta_2}, \, l^{m,k}_{q,\sigma}) \underset{q,\sigma}{\gtrsim} \nu_1^{1‐\lambda}\nu_2^\lambda (n^{‐\frac 12} m^{\frac 1q} k^{\frac{1}{\sigma}})^{\frac{1/p‐1/q}{1/2‐1/q}} \stackrel{(\ref{1234})}{=}\nu_1^{1‐\lambda}\nu_2^\lambda (n^{‐\frac 12} m^{\frac 1q} k^{\frac{1}{\sigma}})^{\frac{1/\theta‐1/\sigma}{1/2‐1/\sigma}}.
$$
\end{Lem}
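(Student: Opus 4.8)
The plan is to follow the argument of Lemma \ref{lemma04} almost verbatim, the only change being the admissible range of the parameter $\alpha$ in the construction (\ref{tiltalphtilr}). I would prove assertion 1; assertion 2 follows by interchanging the roles of $(m, \, q, \, p_i)$ and $(k, \, \sigma, \, \theta_i)$ and capping $\tilde r$ at $m$ instead of $\tilde l$ at $k$. As in Lemma \ref{lemma04}, set
$$
\tilde r = (n^{\frac 12}m^{-\frac 1q} k^{-\frac{1}{\sigma}})^{\frac{1-\alpha}{1/2-1/q}}, \quad \tilde l = (n^{\frac 12}m^{-\frac 1q} k^{-\frac{1}{\sigma}})^{\frac{\alpha}{1/2-1/\sigma}}, \quad \alpha\in[0, \, \alpha^*],
$$
where $\alpha^*\in(0, \, 1]$ is the value of $\alpha$ at which $\tilde l = k$. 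A direct logarithmic computation shows that at $\alpha=\alpha^*$ one has $\tilde r = (n^{\frac 12}m^{-\frac 1q}k^{-\frac 12})^{\frac{1}{1/2-1/q}}$, and that the hypothesis $km^{\frac 2q}\le n$ is exactly what forces $\alpha^*\le 1$ (equivalently, that this value of $\tilde r$ is $\ge 1$).

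Since the base $n^{\frac 12}m^{-\frac 1q}k^{-\frac{1}{\sigma}}$ is $\ge 1$ in the regime $km^{\frac 2q}\le n$, the quantity $\tilde l$ is increasing and $\tilde r$ is decreasing in $\alpha$; hence for $\alpha\in[0, \, \alpha^*]$ we have $\tilde l\in[1, \, k]$ and $\tilde r\in[\,(n^{\frac 12}m^{-\frac 1q}k^{-\frac 12})^{\frac{1}{1/2-1/q}}, \, (n^{\frac 12}m^{-\frac 1q}k^{-\frac{1}{\sigma}})^{\frac{1}{1/2-1/q}}\,]$, where the upper bound $n\le mk^{\frac{2}{\sigma}}$ guarantees $\tilde r\le m$. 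Thus $\tilde r\in[1, \, m]$ and $\tilde l\in[1, \, k]$ throughout. Next I would note that $\alpha\mapsto \tilde r^{\frac{1}{p_1}-\frac{1}{p_2}}\tilde l^{\frac{1}{\theta_1}-\frac{1}{\theta_2}}$ is monotone (its logarithm is affine in $\alpha$) and that its values at $\alpha=0$ and $\alpha=\alpha^*$ are precisely the two bounds in (\ref{nu1nu2lege1}), namely $(n^{\frac 12}m^{-\frac 1q}k^{-\frac{1}{\sigma}})^{\frac{1/p_1-1/p_2}{1/2-1/q}}$ and $k^{\frac{1}{\theta_1}-\frac{1}{\theta_2}}(n^{\frac 12}m^{-\frac 1q}k^{-\frac 12})^{\frac{1/p_1-1/p_2}{1/2-1/q}}$. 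By the intermediate value theorem, (\ref{nu1nu2lege1}) then lets me choose $\alpha\in[0, \, \alpha^*]$ for which (\ref{n1n2rp1lt1}) holds.

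With this $\alpha$ and $r=\lceil\tilde r\rceil$, $l=\lceil\tilde l\rceil$, Lemma \ref{incl_1} yields $W:=\nu_1^{1-\lambda}\nu_2^\lambda r^{-\frac 1p}l^{-\frac{1}{\theta}}V^{m,k}_{r,l}\subset 4(\nu_1B^{m,k}_{p_1,\theta_1}\cap\nu_2B^{m,k}_{p_2,\theta_2})$, while (\ref{tiltalphtilr}) gives $n\le m^{\frac 2q}k^{\frac{2}{\sigma}}r^{1-\frac 2q}l^{1-\frac{2}{\sigma}}$, so the first branch of Proposition \ref{v_dn} applies and produces $d_n(W, \, l^{m,k}_{q,\sigma}) \gtrsim \nu_1^{1-\lambda}\nu_2^\lambda r^{\frac 1q-\frac 1p}l^{\frac{1}{\sigma}-\frac{1}{\theta}}$. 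Finally, exactly as in Lemma \ref{lemma04}, the equality (\ref{lpq_lts_eq}) makes the exponent $\frac{(1-\alpha)(1/q-1/p)}{1/2-1/q}+\frac{\alpha(1/\sigma-1/\theta)}{1/2-1/\sigma}$ independent of $\alpha$ and equal to $\frac{1/q-1/p}{1/2-1/q}$; substituting (\ref{tiltalphtilr}) and invoking (\ref{1234}) then gives the claimed bound.

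I expect the only genuinely new point — and hence the step to check with care — to be that the upper endpoint of the interval (\ref{nu1nu2lege1}) corresponds to the capped value $\alpha=\alpha^*$ (where $\tilde l=k$) rather than to $\alpha=1$ as in Lemma \ref{lemma04}; this is precisely why the bound $k^{\frac{1}{\theta_1}-\frac{1}{\theta_2}}(n^{\frac 12}m^{-\frac 1q}k^{-\frac 12})^{\frac{1/p_1-1/p_2}{1/2-1/q}}$ replaces $(n^{\frac 12}m^{-\frac 1q}k^{-\frac{1}{\sigma}})^{\frac{1/\theta_1-1/\theta_2}{1/2-1/\sigma}}$. Once the identity $\tilde r|_{\alpha=\alpha^*}=(n^{\frac 12}m^{-\frac 1q}k^{-\frac 12})^{\frac{1}{1/2-1/q}}$ and the range constraints $1\le\tilde r\le m$, $1\le\tilde l\le k$ are verified, the remainder is word-for-word the proof of Lemma \ref{lemma04}.
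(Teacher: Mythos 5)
Your proposal is correct and takes essentially the same approach as the paper: the paper's own proof of assertion 1 uses the parametrization $\tilde r=(n^{\frac 12}m^{-\frac 1q}k^{-\frac{1}{\sigma}})^{\frac{1-\alpha}{1/2-1/q}}(n^{\frac 12}m^{-\frac 1q}k^{-\frac 12})^{\frac{\alpha}{1/2-1/q}}$, $\tilde l=k^{\alpha}$, $\alpha\in[0,\,1]$, which traces exactly the same curve segment $n=m^{\frac 2q}k^{\frac{2}{\sigma}}\tilde r^{1-\frac 2q}\tilde l^{1-\frac{2}{\sigma}}$ with the same endpoints as your capped range $[0,\,\alpha^*]$, the two being related by the affine change of variable $\alpha=\beta/\alpha^*$. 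All remaining steps (the intermediate value argument for (\ref{n1n2rp1lt1}), Lemma \ref{incl_1}, the first branch of Proposition \ref{v_dn}, and the exponent identity coming from (\ref{lpq_lts_eq}) and (\ref{1234})) coincide with the paper's.
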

\begin{proof}
Let condition 1 hold (the case of condition 2 is similar).

We set $W=\nu_1^{1‐\lambda} \nu_2^\lambda r^{‐\frac 1p} l^{‐\frac{1}{\theta}} V_{r,l}^{m,k}$, where $r=\lceil \tilde r\rceil$, $l = \lceil \tilde l\rceil$,
\begin{align}
\label{tilrdef_alph}
\tilde r=(n^\frac 12 m^{‐\frac 1q}k^{‐\frac{1}{\sigma}})^{\frac{1‐\alpha}{1/2‐1/q}}(n^\frac 12 m^{‐\frac 1q}k^{‐\frac{1}{2}})^{\frac{\alpha}{1/2‐1/q}}, \quad \tilde l = k^\alpha,
\end{align}
$\alpha \in [0, \, 1]$. Since $km^{\frac 2q}\le n \le mk^{\frac{2}{\sigma}}$, we have $1\le r\le m$, $1\le l\le k$.

From (\ref{nu1nu2lege1}) it follows that there exists $\alpha \in [0, \, 1]$ such that (\ref{n1n2rp1lt1}) holds.

By Lemma \ref{incl_1}, 
\begin{align}
\label{wsb4}
W \subset 4(\nu_1B^{m,k}_{p_1,\theta_1} \cap \nu_2 B^{m,k}_{p_2,\theta_2}).
\end{align}

From (\ref{tilrdef_alph}) it follows that $n\le m^{\frac 2q}k^{\frac{2}{\sigma}}r^{1‐\frac 2q} l^{1‐\frac{2}{\sigma}}$; therefore,
$$
d_n(\nu_1B^{m,k}_{p_1,\theta_1} \cap \nu_2 B^{m,k}_{p_2,\theta_2}, \, l^{m,k}_{q,\sigma})\stackrel{(\ref{wsb4})}{\gtrsim} d_n(\nu_1^{1‐\lambda} \nu_2^\lambda r^{‐\frac 1p} l^{‐\frac{1}{\theta}} V_{r,l}^{m,k}, \, l^{m,k}_{q,\sigma}) \stackrel{(\ref{dn_vmk1})}{\underset{q,\sigma}{\gtrsim}}
$$
$$
\gtrsim \nu_1^{1‐\lambda} \nu_2^\lambda r^{\frac 1q‐\frac 1p} l^{\frac{1}{\sigma}‐\frac{1}{\theta}};
$$
this together with (\ref{1234}), (\ref{lpq_lts_eq}), (\ref{tilrdef_alph}) implies the desired estimate.
\end{proof}

\begin{Lem}
\label{lemma06} Let $q>2$, $\sigma>2$, $\max\{mk^{\frac{2}{\sigma}}, \, m^{\frac 2q}k\}\le n \le \frac{mk}{2}$, $\lambda \in [0, \, 1]$, $\frac 1p = \frac{1‐\lambda}{p_1} +\frac{\lambda}{p_2}$, $\frac{1}{\theta} = \frac{1‐\lambda}{\theta_1} +\frac{\lambda}{\theta_2}$, 
\begin{align}
\label{ttt}
\frac{1/p‐1/q}{1/2‐1/q}=\frac{1/\theta‐1/\sigma}{1/2‐1/\sigma}. 
\end{align}
Let one of the following conditions hold:
\begin{align}
\label{nu1nu2in5} m^{\frac{1}{p_1}‐\frac{1}{p_2}} (n^{\frac 12} m^{‐\frac 12} k^{‐\frac{1}{\sigma}})^{\frac{1/\theta_1‐1/\theta_2}{1/2‐1/\sigma}} \le \frac{\nu_1}{\nu_2} \le k^{\frac{1}{\theta_1} ‐\frac{1}{\theta_2}}(n^{\frac 12} m^{‐\frac 1q} k^{‐\frac{1}{2}}) ^{\frac{1/p_1‐1/p_2}{1/2‐1/q}}
\end{align}
or
\begin{align}
\label{nu1nu2in500} k^{\frac{1}{\theta_1} ‐\frac{1}{\theta_2}}(n^{\frac 12} m^{‐\frac 1q} k^{‐\frac{1}{2}}) ^{\frac{1/p_1‐1/p_2}{1/2‐1/q}} \le \frac{\nu_1}{\nu_2} \le m^{\frac{1}{p_1}‐\frac{1}{p_2}} (n^{\frac 12} m^{‐\frac 12} k^{‐\frac{1}{\sigma}})^{\frac{1/\theta_1‐1/\theta_2}{1/2‐1/\sigma}}.
\end{align}
Then
$$
d_n(\nu_1B^{m,k}_{p_1,\theta_1}\cap\nu_2B^{m,k}_{p_2,\theta_2}, \, l^{m,k}_{q,\sigma}) \underset{q,\sigma}{\gtrsim} \nu_1^{1‐\lambda}\nu_2^\lambda k^{\frac{1}{\sigma}‐ \frac{1}{\theta}}(n^{‐\frac 12} m^{\frac 1q} k^{\frac{1}{2}})^{\frac{1/p‐1/q}{1/2‐1/q}} \stackrel{(\ref{1234})}{=}
$$
$$
=\nu_1^{1‐\lambda}\nu_2^\lambda m^{\frac{1}{q}‐ \frac{1}{p}}(n^{‐\frac 12} m^{\frac 12} k^{\frac{1}{\sigma}})^{\frac{1/\theta‐1/\sigma}{1/2‐1/\sigma}}.
$$
\end{Lem}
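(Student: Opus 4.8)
The plan is to follow the scheme of Lemmas \ref{lemma04} and \ref{lemma05}: I would fit a scaled copy of some $V^{m,k}_{r,l}$ inside $4(\nu_1B^{m,k}_{p_1,\theta_1}\cap\nu_2B^{m,k}_{p_2,\theta_2})$ and then invoke the lower bound (\ref{dn_vmk1}) of Proposition \ref{v_dn}. In the range $\max\{mk^{2/\sigma},\,m^{2/q}k\}\le n\le\frac{mk}{2}$ the extremal configuration interpolates between the corner $r=m$ (the $V^{m,k}_{m,l}$ situation of Lemma \ref{lemma03}, assertion 1) and the corner $l=k$ (the $V^{m,k}_{r,k}$ situation of Lemma \ref{lemma03}, assertion 2). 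Accordingly I would introduce $\alpha\in[0,\,1]$ and set
\[
\tilde r = m^{1-\alpha}\bigl(n^{\frac12}m^{-\frac1q}k^{-\frac12}\bigr)^{\frac{\alpha}{1/2-1/q}},\qquad \tilde l = \bigl(n^{\frac12}m^{-\frac12}k^{-\frac1\sigma}\bigr)^{\frac{1-\alpha}{1/2-1/\sigma}}k^{\alpha},
\]
so that $\tilde r=m$ at $\alpha=0$ and $\tilde l=k$ at $\alpha=1$, and then take $r=\lceil\tilde r\rceil$, $l=\lceil\tilde l\rceil$, $W=\nu_1^{1-\lambda}\nu_2^\lambda r^{-\frac1p}l^{-\frac1\theta}V^{m,k}_{r,l}$.

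Next I would dispatch the admissibility conditions. Since $\log\tilde r$ and $\log\tilde l$ are affine in $\alpha$, both are monotone, and evaluating the two endpoints together with $\max\{mk^{2/\sigma},\,m^{2/q}k\}\le n\le\frac{mk}{2}$ gives $\tilde r\in[1,\,m]$, $\tilde l\in[1,\,k]$ throughout. The product $\tilde r^{\,1/p_1-1/p_2}\tilde l^{\,1/\theta_1-1/\theta_2}$ is likewise monotone in $\alpha$, and its values at $\alpha=0$ and $\alpha=1$ are \emph{exactly} the two bounds on $\nu_1/\nu_2$ in (\ref{nu1nu2in5}) (respectively (\ref{nu1nu2in500})); hence under either hypothesis there is an $\alpha\in[0,\,1]$ for which (\ref{n1n2rp1lt1}) holds, and Lemma \ref{incl_1} then yields $W\subset 4(\nu_1B^{m,k}_{p_1,\theta_1}\cap\nu_2B^{m,k}_{p_2,\theta_2})$. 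A direct count of exponents shows $m^{2/q}k^{2/\sigma}\tilde r^{\,1-2/q}\tilde l^{\,1-2/\sigma}=n$, so $n\le m^{2/q}k^{2/\sigma}r^{1-2/q}l^{1-2/\sigma}$ and we are in the first regime of (\ref{dn_vmk1}), whence $d_n(V^{m,k}_{r,l},\,l^{m,k}_{q,\sigma})\underset{q,\sigma}{\gtrsim}r^{1/q}l^{1/\sigma}$. Through the inclusion this gives
\[
d_n(\nu_1B^{m,k}_{p_1,\theta_1}\cap\nu_2B^{m,k}_{p_2,\theta_2},\,l^{m,k}_{q,\sigma})\underset{q,\sigma}{\gtrsim}\nu_1^{1-\lambda}\nu_2^\lambda\, r^{\frac1q-\frac1p}l^{\frac1\sigma-\frac1\theta}.
\]

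The step I expect to be the crux is the final exponent bookkeeping, because a priori $r,l$ depend on the chosen $\alpha$, hence on $\nu_1/\nu_2$, while the asserted bound does not. The resolution is that the constraint (\ref{ttt}), i.e. $\frac{1/p-1/q}{1/2-1/q}=\frac{1/\theta-1/\sigma}{1/2-1/\sigma}=:\lambda_0$, forces the $\alpha$-dependence to cancel: substituting the formulas for $\tilde r,\tilde l$ and collecting powers of $m$, $k$, $n$ yields $\tilde r^{\,1/q-1/p}\tilde l^{\,1/\sigma-1/\theta}=(n^{-1/2}m^{1/q}k^{1/\sigma})^{\lambda_0}$, uniformly in $\alpha$. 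Since $\tilde r,\tilde l\ge1$ we have $\tilde r\le r\le 2\tilde r$, $\tilde l\le l\le 2\tilde l$, and as the exponents $1/q-1/p$ and $1/\sigma-1/\theta$ lie in $[-1,\,0]$ the passage from $\tilde r,\tilde l$ to $r,l$ costs only an absolute constant factor, independent of $p$ and $\theta$. Finally, because $\lambda_{p,q}=\lambda_{\theta,\sigma}$, the identities (\ref{1234}) rewrite $(n^{-1/2}m^{1/q}k^{1/\sigma})^{\lambda_0}$ as $k^{\frac1\sigma-\frac1\theta}(n^{-1/2}m^{1/q}k^{1/2})^{\lambda_0}=m^{\frac1q-\frac1p}(n^{-1/2}m^{1/2}k^{1/\sigma})^{\lambda_0}$, which is precisely the claimed estimate.
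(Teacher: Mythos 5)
Your proposal is correct and follows essentially the same route as the paper: you choose exactly the paper's interpolating pair $\tilde r,\,\tilde l$ from (\ref{rmn12}), pick $\alpha$ via the intermediate value argument so that (\ref{n1n2rp1lt1}) holds, embed $W=\nu_1^{1-\lambda}\nu_2^{\lambda}r^{-1/p}l^{-1/\theta}V^{m,k}_{r,l}$ by Lemma \ref{incl_1}, apply the first regime of (\ref{dn_vmk1}), and finish with the algebraic identities behind (\ref{1234}) and (\ref{ttt}). The only difference is that you spell out details the paper leaves implicit (monotonicity in $\alpha$, the exact endpoint values matching (\ref{nu1nu2in5})--(\ref{nu1nu2in500}), the verification $m^{2/q}k^{2/\sigma}\tilde r^{1-2/q}\tilde l^{1-2/\sigma}=n$, the constant-factor cost of the ceilings, and the $\alpha$-independence of $\tilde r^{1/q-1/p}\tilde l^{1/\sigma-1/\theta}$), all of which check out.
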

\begin{proof}
We set $W\subset \nu_1^{1‐\lambda}\nu_2^\lambda r^{‐\frac 1p} l^{‐\frac{1}{\theta}} V_{r,l}^{m,k}$, where $r=\lceil \tilde r\rceil$, $l= \lceil \tilde l \rceil$, 
\begin{align}
\label{rmn12}
\tilde r = m^{1‐\alpha}(n^{\frac 12}m^{‐\frac 1q}k^{‐\frac 12})^{\frac{\alpha}{1/2‐1/q}}, \quad \tilde l = (n^{\frac 12} m^{‐\frac 12} k^{‐\frac{1}{\sigma}})^{\frac{1‐\alpha}{1/2‐1/\sigma}}k^\alpha,
\end{align}
$\alpha \in [0, \, 1]$. Since $\max\{mk^{\frac{2}{\sigma}}, \, m^{\frac 2q}k\}\le n \le \frac{mk}{2}$, we get $1\le r\le m$, $1\le l\le k$.

By (\ref{nu1nu2in5}) or (\ref{nu1nu2in500}), there is $\alpha \in [0, \, 1]$ such that (\ref{n1n2rp1lt1}) holds. From Lemma \ref{incl_1} it follows that 
\begin{align}
\label{w_s_4_n}
W\subset 4(\nu_1B^{m,k}_{p_1,\theta_1}\cap\nu_2B^{m,k}_{p_2,\theta_2}). 
\end{align}
By (\ref{rmn12}), $n\le m^{\frac 2q}k^{\frac{2}{\sigma}} r^{1‐\frac 2q} l^{1‐\frac{2}{\sigma}}$. Hence
$$
d_n(\nu_1B^{m,k}_{p_1,\theta_1}\cap\nu_2B^{m,k}_{p_2,\theta_2}, \, l^{m,k}_{q,\sigma}) \stackrel{(\ref{w_s_4_n})}{\gtrsim} d_n(\nu_1^{1‐\lambda}\nu_2^\lambda r^{‐\frac 1p} l^{‐\frac{1}{\theta}} V_{r,l}^{m,k}, \, l^{m,k}_{q,\sigma}) \stackrel{(\ref{dn_vmk1})}{\underset{q,\sigma}{\gtrsim}}
$$
$$
\gtrsim \nu_1^{1‐\lambda}\nu_2^\lambda r^{\frac 1q‐\frac 1p} l^{\frac{1}{\sigma}‐\frac{1}{\theta}};
$$
this together with (\ref{1234}), (\ref{ttt}), (\ref{rmn12}) yields the desired estimate.
\end{proof}

\begin{Lem}
\label{lemma07} 
Let $q\ge 2$, $\sigma\ge 2$.
\begin{enumerate}
\item Let $mk^{2/\sigma}\le n\le \frac{mk}{2}$, $\frac{\nu_1}{\nu_2} \ge m^{\frac{1}{p_1}‐\frac{1}{p_2}}$.
Then
$$
d_n(\nu_1B_{p_1,\theta_1}^{m,k}\cap \nu_2 B_{p_2,\theta_2}^{m,k}, \, l_{q,\sigma}^{m,k}) \underset{q,\sigma}{\gtrsim} \nu_2 m^{\frac 1q‐\frac{1}{p_2}+\frac 12} k^{\frac{1}{\sigma}} n^{‐\frac 12}.
$$

\item Let $m^{2/q}k\le n \le \frac{mk}{2}$, $\frac{\nu_1}{\nu_2} \ge k^{\frac{1}{\theta_1}‐\frac{1}{\theta_2}}$.
Then
$$
d_n(\nu_1B_{p_1,\theta_1}^{m,k}\cap \nu_2 B_{p_2,\theta_2}^{m,k}, \, l_{q,\sigma}^{m,k}) \underset{q,\sigma}{\gtrsim} \nu_2 k^{\frac{1}{\sigma}‐\frac{1}{\theta_2}+\frac 12} m^{\frac{1}{q}} n^{‐\frac 12}.
$$
\end{enumerate}
\end{Lem}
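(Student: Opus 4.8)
The plan is to follow exactly the same scheme as in Lemmas \ref{lemma03}--\ref{lemma06}: exhibit a suitably scaled copy of some $V^{m,k}_{r,l}$ inside the intersection $\nu_1B^{m,k}_{p_1,\theta_1}\cap\nu_2B^{m,k}_{p_2,\theta_2}$, and then combine the inclusion with the monotonicity and homogeneity of Kolmogorov widths and with the lower bound of Proposition \ref{v_dn}. The novelty compared with the earlier lemmas is only the choice of block: for assertion 1 I would take the degenerate rectangle $r=m$, $l=1$ (for assertion 2, by the evident $m\leftrightarrow k$, $p_i\leftrightarrow\theta_i$, $q\leftrightarrow\sigma$ symmetry, $r=1$, $l=k$). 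The hypothesis $\frac{\nu_1}{\nu_2}\ge m^{\frac{1}{p_1}-\frac{1}{p_2}}$ is precisely the condition that lets this single column of $m$ ones fit, after normalization, into both balls simultaneously.

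First I would verify the inclusion $\nu_2 m^{-1/p_2}V^{m,k}_{m,1}\subset \nu_1B^{m,k}_{p_1,\theta_1}\cap\nu_2B^{m,k}_{p_2,\theta_2}$. By (\ref{eij_kl}), (\ref{vrl_km}) it suffices to check that $\nu_2 m^{-1/p_2}e^{m,k,m,1}$ belongs to each ball. Its $l^{m,k}_{p_2,\theta_2}$-norm equals $\nu_2 m^{-1/p_2}\cdot m^{1/p_2}=\nu_2$, so it lies in $\nu_2B^{m,k}_{p_2,\theta_2}$; its $l^{m,k}_{p_1,\theta_1}$-norm equals $\nu_2 m^{-1/p_2}\cdot m^{1/p_1}=\nu_2 m^{1/p_1-1/p_2}$, which is at most $\nu_1$ exactly by the assumption $\frac{\nu_1}{\nu_2}\ge m^{1/p_1-1/p_2}$. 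Note that, unlike in Lemma \ref{incl_1}, here $l=1$ makes the $\theta$-exponents disappear, so the hypothesis is used as a sharp inequality and no factor $4$ (or $2$) is incurred.

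From the inclusion, monotonicity and homogeneity I then obtain $d_n(\nu_1B^{m,k}_{p_1,\theta_1}\cap\nu_2B^{m,k}_{p_2,\theta_2},\, l^{m,k}_{q,\sigma})\ge \nu_2 m^{-1/p_2}\, d_n(V^{m,k}_{m,1},\, l^{m,k}_{q,\sigma})$. Applying Proposition \ref{v_dn} with $r=m$, $l=1$, whose threshold $m^{2/q}k^{2/\sigma}r^{1-2/q}l^{1-2/\sigma}$ collapses to $mk^{2/\sigma}$, the hypothesis $n\ge mk^{2/\sigma}$ places us in the second branch of (\ref{dn_vmk1}), giving $d_n(V^{m,k}_{m,1},\, l^{m,k}_{q,\sigma})\underset{q,\sigma}{\gtrsim} n^{-1/2}m^{1/q}k^{1/\sigma}m^{1/2}$. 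Substituting and using $m^{-1/p_2}\cdot m^{1/q}\cdot m^{1/2}=m^{1/q-1/p_2+1/2}$ yields the asserted estimate; assertion 2 follows verbatim after the symmetry substitution, with the block $V^{m,k}_{1,k}$ and the range $n\ge m^{2/q}k$.

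There is essentially no analytic obstacle here, which is why I expect this lemma to be the shortest in the section: no interpolation parameter $\lambda$ and no auxiliary exponent $\alpha$ need to be tuned, because one degenerate block already saturates both norm constraints. The only point demanding genuine care is the bookkeeping that the Proposition \ref{v_dn} threshold for $(r,l)=(m,1)$ (resp. $(1,k)$) is identical to the left endpoint $n\ge mk^{2/\sigma}$ (resp. $n\ge m^{2/q}k$) of the admissible range, so that the decaying branch of (\ref{dn_vmk1}) is indeed the relevant one everywhere on the stated interval.
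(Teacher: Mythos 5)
Your proposal is correct and follows essentially the same route as the paper: the paper likewise sets $W=\nu_2 m^{-1/p_2}V^{m,k}_{m,1}$, deduces the inclusion $W\subset \nu_1B^{m,k}_{p_1,\theta_1}\cap\nu_2B^{m,k}_{p_2,\theta_2}$ from $\nu_2 m^{1/p_1-1/p_2}\le\nu_1$, notes that $n\ge mk^{2/\sigma}=m^{2/q}k^{2/\sigma}m^{1-2/q}$ puts one in the decaying branch of (\ref{dn_vmk1}), and concludes with the same computation (assertion 2 by the same symmetry). Your additional bookkeeping about the threshold coinciding with the left endpoint of the admissible range, and about no constant factor being needed in the inclusion, matches the paper's argument.
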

\begin{proof}
We consider assertion 1 (assertion 2 is similar).

We set $W=\nu_2 m^{‐\frac{1}{p_2}}V^{m,k}_{m,1}$. From the inequality $\nu_2 m^{\frac{1}{p_1}‐\frac{1}{p_2}}\le \nu_1$ it follows that $W \subset \nu_1B_{p_1,\theta_1}^{m,k}\cap \nu_2 B_{p_2,\theta_2}^{m,k}$. We have $n\ge m^{\frac 2q}k^{\frac{2}{\sigma}} m^{1‐\frac 2q}$. Hence
$$
d_n(\nu_1B_{p_1,\theta_1}^{m,k}\cap \nu_2 B_{p_2,\theta_2}^{m,k}, \, l_{q,\sigma}^{m,k}) \ge d_n(\nu_2 m^{‐\frac{1}{p_2}}V_{m,1}^{m,k}, \, l_{q,\sigma}^{m,k}) \stackrel{(\ref{dn_vmk1})}{\underset{q,\sigma}{\gtrsim}} \nu_2 m^{‐\frac{1}{p_2}} n^{‐\frac 12} m^{\frac 1q} k^{\frac{1}{\sigma}}m^{\frac 12}.
$$
This completes the proof.
\end{proof}

\begin{Lem}
\label{lemma08} 
Let $q\ge 2$, $\sigma\ge 2$.
\begin{enumerate}
\item Let $mk^{2/\sigma}\le n \le \frac{mk}{2}$, $\tilde \mu\in [0, \, 1]$, $\frac 12 = \frac{1‐\tilde \mu}{\theta_1} +\frac{\tilde \mu}{\theta_2}$, $\frac{1}{\tilde p} = \frac{1‐\tilde \mu}{p_1} +\frac{\tilde \mu}{p_2}$,
$$
m^{\frac{1}{p_1}‐\frac{1}{p_2}} (n^{\frac 12}m^{‐\frac 12}k^{‐\frac{1}{\sigma}})^{\frac{1/\theta_1‐1/\theta_2}{1/2‐1/\sigma}} \le \frac{\nu_1}{\nu_2} \le m^{\frac{1}{p_1}‐\frac{1}{p_2}}
$$
or
$$
m^{\frac{1}{p_1}‐\frac{1}{p_2}} \le \frac{\nu_1}{\nu_2} \le m^{\frac{1}{p_1}‐\frac{1}{p_2}} (n^{\frac 12}m^{‐\frac 12}k^{‐\frac{1}{\sigma}})^{\frac{1/\theta_1‐1/\theta_2}{1/2‐1/\sigma}}.
$$
Then
$$
d_n(\nu_1B_{p_1,\theta_1}^{m,k}\cap \nu_2 B_{p_2,\theta_2}^{m,k}, \, l_{q,\sigma}^{m,k}) \underset{q,\sigma}{\gtrsim} \nu_1^{1‐\tilde \mu}\nu_2^{\tilde \mu} m^{\frac 1q‐\frac{1}{\tilde p}+\frac 12} k^{\frac{1}{\sigma}} n^{‐\frac 12}.
$$

\item Let $m^{2/q}k\le n \le \frac{mk}{2}$, $\tilde \lambda\in [0, \, 1]$, $\frac 12 = \frac{1‐\tilde \lambda}{p_1} +\frac{\tilde \lambda}{p_2}$, $\frac{1}{\tilde \theta} = \frac{1‐\tilde \lambda}{\theta_1} +\frac{\tilde \lambda}{\theta_2}$,
$$
k^{\frac{1}{\theta_1}‐\frac{1}{\theta_2}} (n^{\frac 12}m^{‐\frac 1q}k^{‐\frac{1}{2}})^{\frac{1/p_1‐1/p_2}{1/2‐1/q}} \le \frac{\nu_1}{\nu_2} \le k^{\frac{1}{\theta_1}‐\frac{1}{\theta_2}}
$$
or
$$
k^{\frac{1}{\theta_1}‐\frac{1}{\theta_2}} \le \frac{\nu_1}{\nu_2} \le k^{\frac{1}{\theta_1}‐\frac{1}{\theta_2}} (n^{\frac 12}m^{‐\frac 1q}k^{‐\frac{1}{2}})^{\frac{1/p_1‐1/p_2}{1/2‐1/q}}.
$$
Then
$$
d_n(\nu_1B_{p_1,\theta_1}^{m,k}\cap \nu_2 B_{p_2,\theta_2}^{m,k}, \, l_{q,\sigma}^{m,k}) \underset{q,\sigma}{\gtrsim} \nu_1^{1‐\tilde \lambda}\nu_2^{\tilde \lambda} k^{\frac{1}{\sigma}‐\frac{1}{\tilde \theta}+\frac 12} m^{\frac{1}{q}} n^{‐\frac 12}.
$$
\end{enumerate}
\end{Lem}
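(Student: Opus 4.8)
The plan is to follow exactly the scheme of Lemmas \ref{lemma03}--\ref{lemma06}: embed a suitably scaled copy of a body $V^{m,k}_{r,l}$ into a fixed multiple of the intersection $\nu_1B^{m,k}_{p_1,\theta_1}\cap\nu_2B^{m,k}_{p_2,\theta_2}$ by means of Lemma \ref{incl_1}, and then read off the lower bound from the estimate (\ref{dn_vmk1}) of Proposition \ref{v_dn}. I shall only treat assertion 1; assertion 2 is obtained by interchanging the roles of the pairs $(m,p_i,q)$ and $(k,\theta_i,\sigma)$, fixing $\tilde l=k$ in place of $\tilde r=m$.

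For assertion 1 the relevant interpolation is the one that drives the $\theta$-exponent to $2$: I apply Lemma \ref{incl_1} with $\lambda=\tilde\mu$, so that $p=\tilde p$ and $\theta=\tilde\theta=2$, and I fix $\tilde r=m$ (hence $r=m$). With this choice, condition (\ref{n1n2rp1lt1}) becomes $\frac{\nu_1}{\nu_2}=m^{\frac1{p_1}-\frac1{p_2}}\tilde l^{\frac1{\theta_1}-\frac1{\theta_2}}$, which I solve for $\tilde l$. Writing $L:=\bigl(n^{\frac12}m^{-\frac12}k^{-\frac1\sigma}\bigr)^{\frac1{1/2-1/\sigma}}$, a direct computation shows that the two-sided bound on $\nu_1/\nu_2$ in the hypothesis — in whichever of its two orderings is non-degenerate, according to the sign of $\frac1{\theta_1}-\frac1{\theta_2}$ — is precisely equivalent to $1\le\tilde l\le L$; moreover $mk^{2/\sigma}\le n\le\frac{mk}2$ gives $1\le L\le k$, so that $\tilde l\in[1,k]$. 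Taking $l=\lfloor\tilde l\rfloor$, Lemma \ref{incl_1} yields
$$
W:=\nu_1^{1-\tilde\mu}\nu_2^{\tilde\mu}m^{-\frac1{\tilde p}}l^{-\frac12}V^{m,k}_{m,l}\subset 4\bigl(\nu_1B^{m,k}_{p_1,\theta_1}\cap\nu_2B^{m,k}_{p_2,\theta_2}\bigr).
$$

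It remains to compute the width of $W$. Since $l\le\tilde l\le L$, one has $mk^{2/\sigma}l^{1-2/\sigma}\le mk^{2/\sigma}L^{1-2/\sigma}=n$, that is $n\ge m^{\frac2q}k^{\frac2\sigma}m^{1-\frac2q}l^{1-\frac2\sigma}$, so the second branch of (\ref{dn_vmk1}) applies and gives $d_n\bigl(V^{m,k}_{m,l},\,l^{m,k}_{q,\sigma}\bigr)\underset{q,\sigma}{\gtrsim}n^{-\frac12}m^{\frac1q}k^{\frac1\sigma}m^{\frac12}l^{\frac12}$. Combining this with the displayed inclusion, the powers $l^{-1/2}$ and $l^{1/2}$ cancel, and I obtain
$$
d_n\bigl(\nu_1B^{m,k}_{p_1,\theta_1}\cap\nu_2B^{m,k}_{p_2,\theta_2},\,l^{m,k}_{q,\sigma}\bigr)\underset{q,\sigma}{\gtrsim}\nu_1^{1-\tilde\mu}\nu_2^{\tilde\mu}m^{\frac1q-\frac1{\tilde p}+\frac12}k^{\frac1\sigma}n^{-\frac12},
$$
as claimed. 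The only delicate point — and the step I expect to be the main obstacle — is the bookkeeping that converts the two displayed ranges for $\nu_1/\nu_2$ into the single clean condition $1\le\tilde l\le L$, and then checks that the integer $l$ keeps us on the correct side of the threshold of (\ref{dn_vmk1}). This is made robust by the fact that the final estimate does not depend on $l$ at all (the $l$-powers cancel), so the usual rounding discrepancies between $\tilde l$ and $l$ are harmless.
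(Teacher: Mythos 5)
Your proof is correct and follows essentially the same route as the paper's: both arguments fix $\tilde r=m$, apply Lemma \ref{incl_1} with $\lambda=\tilde\mu$ (so that $p=\tilde p$, $\theta=2$), choose $\tilde l$ so that (\ref{n1n2rp1lt1}) holds (the paper parametrizes $\tilde l$ by an exponent $\alpha\in[0,\,1]$ instead of solving for $\tilde l$ directly, which is the same thing), verify $1\le l\le k$ and $n\ge mk^{\frac{2}{\sigma}}l^{1-\frac{2}{\sigma}}$, and conclude from the second branch of (\ref{dn_vmk1}) with the $l^{\pm\frac 12}$ factors cancelling. No gaps.
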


\begin{proof}
We prove assertion 1 (assertion 2 is similar).

We set $W=\nu_1^{1‐\tilde \mu}\nu_2^{\tilde \mu} m^{‐\frac{1}{\tilde p}} l^{‐\frac 12}V_{m,l}^{m,k}$, where $l =\lfloor \tilde l\rfloor$, $\tilde l = (n^{\frac 12}m^{‐\frac 12}k^{‐\frac{1}{\sigma}})^{\frac{1‐\alpha}{1/2‐1/\sigma}}$; $\alpha \in [0, \, 1]$ is such that $\frac{\nu_1}{\nu_2} = m^{\frac{1}{p_1}‐\frac{1}{p_2}} \tilde l^{\frac{1}{\theta_1} ‐\frac{1}{\theta_2}}$. Since $mk^{2/\sigma} \le n \le mk$, we have $1\le l\le k$. By Lemma \ref{incl_1}, $W \subset 4(\nu_1B_{p_1,\theta_1}^{m,k}\cap \nu_2 B_{p_2,\theta_2}^{m,k})$. Notice that $n \ge m^{\frac 2q} k^{\frac{2}{\sigma}} m^{1‐\frac 2q} l^{1‐\frac{2}{\sigma}}$. Hence
$$
d_n(\nu_1B_{p_1,\theta_1}^{m,k}\cap \nu_2 B_{p_2,\theta_2}^{m,k}, \, l_{q,\sigma}^{m,k}) \gtrsim d_n(\nu_1^{1‐\tilde \mu}\nu_2^{\tilde \mu} m^{‐\frac{1}{\tilde p}} l^{‐\frac 12}V_{m,l}^{m,k}, \, l_{q,\sigma}^{m,k}) \stackrel{(\ref{dn_vmk1})}{\underset{q,\sigma}{\gtrsim}}
$$
$$
\gtrsim \nu_1^{1‐\tilde \mu}\nu_2^{\tilde \mu} m^{‐\frac{1}{\tilde p}} l^{‐\frac 12} n^{‐\frac 12} m^{\frac 1q} k^{\frac{1}{\sigma}} m^{\frac 12}l^{\frac 12} = \nu_1^{1‐\tilde \mu}\nu_2^{\tilde \mu} m^{\frac 1q‐\frac{1}{\tilde p}} n^{‐\frac 12} m^{\frac 12}k^{\frac{1}{\sigma}}.
$$
This completes the proof.
\end{proof}

\begin{Lem}
\label{lemma09}
Let $q\ge 2$, $\sigma\ge 2$.
\begin{enumerate}
\item Let $m^{2/q}k^{2/\sigma} \le n \le mk^{2/\sigma}$, $1\le \frac{\nu_1}{\nu_2} \le (n^{\frac 12}m^{‐\frac 1q}k^{‐\frac{1}{\sigma}})^{\frac{1/p_1‐1/p_2}{1/2‐1/q}}$ or $(n^{\frac 12}m^{‐\frac 1q}k^{‐\frac{1}{\sigma}})^{\frac{1/p_1‐1/p_2}{1/2‐1/q}} \le \frac{\nu_1}{\nu_2} \le 1$, $\tilde \lambda\in [0, \, 1]$, $\frac{1}{2} = \frac{1‐\tilde \lambda}{p_1} +\frac{\tilde \lambda}{p_2}$. Then 
$$
d_n(\nu_1B_{p_1,\theta_1}^{m,k} \cap \nu_2B_{p_2,\theta_2}^{m,k}, \, l_{q,\sigma}^{m,k}) \underset{q,\sigma}{\gtrsim} \nu_1^{1‐\tilde \lambda} \nu_2^{\tilde \lambda} n^{‐\frac 12}m^{\frac 1q} k^{\frac{1}{\sigma}}.
$$

\item Let $m^{2/q}k^{2/\sigma} \le n \le km^{2/q}$, $1\le \frac{\nu_1}{\nu_2} \le (n^{\frac 12}m^{‐\frac 1q}k^{‐\frac{1}{\sigma}})^{\frac{1/\theta_1‐1/\theta_2}{1/2‐1/\sigma}}$ or $(n^{\frac 12}m^{‐\frac 1q}k^{‐\frac{1}{\sigma}})^{\frac{1/\theta_1‐1/\theta_2}{1/2‐1/\sigma}}\le \frac{\nu_1}{\nu_2} \le 1$, $\tilde \mu\in [0, \, 1]$, $\frac{1}{2} = \frac{1‐\tilde \mu}{\theta_1} +\frac{\tilde \mu}{\theta_2}$. Then
$$
d_n(\nu_1B_{p_1,\theta_1}^{m,k} \cap \nu_2B_{p_2,\theta_2}^{m,k}, \, l_{q,\sigma}^{m,k}) \underset{q,\sigma}{\gtrsim} \nu_1^{1‐\tilde \mu} \nu_2^{\tilde \mu} n^{‐\frac 12}m^{\frac 1q} k^{\frac{1}{\sigma}}.
$$
\end{enumerate}
\end{Lem}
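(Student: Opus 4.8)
The plan is to follow the template already used in Lemmas \ref{lemma02}--\ref{lemma08}: embed a suitably scaled copy of some $V^{m,k}_{r,l}$ into the intersection $\nu_1B^{m,k}_{p_1,\theta_1}\cap\nu_2B^{m,k}_{p_2,\theta_2}$ by means of Lemma \ref{incl_1}, and then invoke the lower bound (\ref{dn_vmk1}) of Proposition \ref{v_dn}. This lemma concerns the regime in which the answer is the ``generic'' quantity $n^{-\frac12}m^{\frac1q}k^{\frac1\sigma}$ carrying the interpolated weight $\nu_1^{1-\tilde\lambda}\nu_2^{\tilde\lambda}$, and the key structural observation is that here one should interpolate \emph{only} in the first index, so as to reach the value $2$ there (this is exactly the choice $\lambda=\tilde\lambda$, $p=2$), while keeping a single column, $l=1$; the latter is what makes the residual power of $l$ cancel. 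By the symmetry $m\leftrightarrow k$, $p_i\leftrightarrow\theta_i$, $q\leftrightarrow\sigma$, $r\leftrightarrow l$, assertion 2 reduces to assertion 1, so I would carry out only assertion 1, and within it the substantive case $q>2$ (for $q=2$ the regime $m^{\frac2q}k^{\frac2\sigma}\le n\le mk^{\frac2\sigma}$ collapses to $n=mk^{\frac2\sigma}$, where the bound is immediate from the inclusion $V^{m,k}_{1,1}\subset$ intersection as in Lemma \ref{lemma01}).

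First I would set $r_0=\bigl(n^{\frac12}m^{-\frac1q}k^{-\frac1\sigma}\bigr)^{\frac{1}{1/2-1/q}}$, which is precisely the value of $r$ at which, for $l=1$, the two branches of (\ref{dn_vmk1}) meet, i.e.\ $m^{\frac2q}k^{\frac2\sigma}r_0^{\,1-\frac2q}=n$. The endpoint constraints give exactly $1\le r_0\le m$: the inequality $n\ge m^{\frac2q}k^{\frac2\sigma}$ yields $n^{\frac12}m^{-\frac1q}k^{-\frac1\sigma}\ge1$, hence $r_0\ge1$, while $n\le mk^{\frac2\sigma}$ yields $n^{\frac12}m^{-\frac1q}k^{-\frac1\sigma}\le m^{\frac12-\frac1q}$, hence $r_0\le m$.

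Next, writing $T=r_0^{\,1/p_1-1/p_2}$, I observe that both alternatives for $\nu_1/\nu_2$ in the hypothesis say precisely that $\nu_1/\nu_2$ lies between $\min\{1,T\}$ and $\max\{1,T\}$ (the ``or'' merely records the sign of $1/p_1-1/p_2$). Hence I can choose $\tilde r\in[1,r_0]$ with $\frac{\nu_1}{\nu_2}=\tilde r^{\,1/p_1-1/p_2}$, and set $\tilde l=1$, so that (\ref{n1n2rp1lt1}) holds with this pair $\tilde r,\tilde l$. Applying Lemma \ref{incl_1} with $\lambda=\tilde\lambda$ (so that the first interpolated index equals $2$), $r=\lfloor\tilde r\rfloor$ and $l=1$, and using $1^{-1/\tilde\theta}=1$, I obtain
$$
\nu_1^{1-\tilde\lambda}\nu_2^{\tilde\lambda}\,r^{-\frac12}\,V^{m,k}_{r,1}\subset 4\bigl(\nu_1B^{m,k}_{p_1,\theta_1}\cap\nu_2B^{m,k}_{p_2,\theta_2}\bigr).
$$

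Finally, since $r=\lfloor\tilde r\rfloor\le\tilde r\le r_0$ and $1-\frac2q\ge0$, one has $m^{\frac2q}k^{\frac2\sigma}r^{\,1-\frac2q}\le m^{\frac2q}k^{\frac2\sigma}r_0^{\,1-\frac2q}=n$, so the \emph{second} branch of (\ref{dn_vmk1}) applies to $d_n(V^{m,k}_{r,1},\,l^{m,k}_{q,\sigma})$ and gives a lower bound of order $n^{-\frac12}m^{\frac1q}k^{\frac1\sigma}r^{\frac12}$. Combining this with the inclusion, the factors $r^{-\frac12}$ and $r^{\frac12}$ cancel exactly (so no auxiliary estimate $r\asymp\tilde r$ is even needed), and one arrives at
$$
d_n\bigl(\nu_1B^{m,k}_{p_1,\theta_1}\cap\nu_2B^{m,k}_{p_2,\theta_2},\,l^{m,k}_{q,\sigma}\bigr)\underset{q,\sigma}{\gtrsim}\nu_1^{1-\tilde\lambda}\nu_2^{\tilde\lambda}\,n^{-\frac12}m^{\frac1q}k^{\frac1\sigma},
$$
as claimed. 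The only genuinely delicate points are the verification $1\le r_0\le m$ from the endpoint constraints on $n$ and the check that $r=\lfloor\tilde r\rfloor$ keeps us on the second branch of (\ref{dn_vmk1}); everything else is the now-standard inclusion-plus-Proposition \ref{v_dn} bookkeeping. I expect the main conceptual step to be the decision to take $l=1$ and interpolate solely in the $p$-index, since that is exactly what removes the stray factor $l^{\frac12-\frac1{\tilde\theta}}$ which would otherwise spoil the exponent.
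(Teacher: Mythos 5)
Your proposal is correct and follows essentially the same route as the paper's proof: the paper likewise takes $\tilde l=1$, chooses $\tilde r$ with $\frac{\nu_1}{\nu_2}=\tilde r^{1/p_1-1/p_2}$ (parametrized there as $\tilde r=(n^{\frac 12}m^{-\frac 1q}k^{-\frac{1}{\sigma}})^{\frac{\alpha}{1/2-1/q}}$, $\alpha\in[0,1]$, which is your $\tilde r\in[1,r_0]$), applies Lemma \ref{incl_1} with $\lambda=\tilde\lambda$ to get $\nu_1^{1-\tilde\lambda}\nu_2^{\tilde\lambda}r^{-\frac 12}V^{m,k}_{r,1}\subset 4(\nu_1B^{m,k}_{p_1,\theta_1}\cap\nu_2B^{m,k}_{p_2,\theta_2})$, and then invokes the second branch of (\ref{dn_vmk1}) so that the factors $r^{\mp\frac 12}$ cancel. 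Your explicit verification of $1\le r_0\le m$ and your separate handling of the degenerate case $q=2$ are just added care on top of the identical argument.
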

\begin{proof}
We prove assertion 1 (assertion 2 is similar).

 We set $r=\lfloor \tilde r \rfloor$, where $\tilde r = (n^{\frac 12}m^{‐\frac 1q}k^{‐\frac{1}{\sigma}}) ^{\frac{\alpha}{1/2‐1/q}}$, $\alpha \in [0, \, 1]$ is such that $\frac{\nu_1}{\nu_2} = \tilde r^{1/p_1‐1/p_2}$. Since $m^{2/q}k^{2/\sigma}\le n\le mk^{2/\sigma}$, we have $1\le r\le m$. By Lemma \ref{incl_1}, $\nu_1^{1‐\tilde \lambda} \nu_2^{\tilde \lambda} r^{‐1/2}V^{m,k}_{r,1} \subset 4(\nu_1B_{p_1,\theta_1}^{m,k} \cap \nu_2B_{p_2,\theta_2}^{m,k})$. In addition, $n\ge m^{\frac 2q}k^{\frac{2}{\sigma}} r^{1‐\frac 2q}$. Hence
$$
d_n(\nu_1B_{p_1,\theta_1}^{m,k} \cap \nu_2B_{p_2,\theta_2}^{m,k}, \, l_{q,\sigma}^{m,k}) \gtrsim d_n(\nu_1^{1‐\tilde \lambda} \nu_2^{\tilde \lambda} r^{‐\frac 12}V^{m,k}_{r,1}, \, l_{q,\sigma}^{m,k}) \stackrel{(\ref{dn_vmk1})}{\underset{q,\sigma}{\gtrsim}} $$$$\gtrsim\nu_1^{1‐\tilde \lambda} \nu_2^{\tilde \lambda} r^{‐\frac 12} n^{‐\frac 12}m^{\frac 1q} k^{\frac{1}{\sigma}} r^{\frac 12} = \nu_1^{1‐\tilde \lambda} \nu_2^{\tilde \lambda} n^{‐\frac 12}m^{\frac 1q} k^{\frac{1}{\sigma}}.
$$
This completes the proof.
\end{proof}
\begin{Lem}
\label{lemma10}
Let $q\ge 2$, $\sigma\ge 2$.
\begin{enumerate}
\item Let $mk^{2/\sigma}\le n \le \frac{mk}{2}$, $1\le \frac{\nu_1}{\nu_2} \le m^{\frac{1}{p_1} ‐\frac{1}{p_2}}$ or $m^{\frac{1}{p_1} ‐\frac{1}{p_2}}\le \frac{\nu_1}{\nu_2} \le  1$, $\tilde \lambda\in [0, \, 1]$, $\frac 12 = \frac{1‐\tilde \lambda}{p_1} +\frac{\tilde \lambda}{p_2}$. Then
$$
d_n(\nu_1B_{p_1,\theta_1}^{m,k} \cap \nu_2B_{p_2,\theta_2}^{m,k}, \, l^{m,k}_{q,\sigma}) \underset{q,\sigma}{\gtrsim} \nu_1^{1‐\tilde \lambda}\nu_2^{\tilde \lambda} n^{‐\frac 12}m^{\frac 1q}k^{\frac{1}{\sigma}}.
$$

\item Let $m^{2/q}k\le n \le \frac{mk}{2}$, $1\le \frac{\nu_1}{\nu_2} \le k^{\frac{1}{\theta_1} ‐\frac{1}{\theta_2}}$ or $k^{\frac{1}{\theta_1} ‐\frac{1}{\theta_2}}\le \frac{\nu_1}{\nu_2} \le 1$, $\tilde \mu\in [0, \, 1]$, $\frac 12 = \frac{1‐\tilde \mu}{\theta_1} +\frac{\tilde \mu}{\theta_2}$. Then
$$
d_n(\nu_1B_{p_1,\theta_1}^{m,k} \cap \nu_2B_{p_2,\theta_2}^{m,k}, \, l^{m,k}_{q,\sigma}) \underset{q,\sigma}{\gtrsim} \nu_1^{1‐\tilde \mu}\nu_2^{\tilde \mu} n^{‐\frac 12}m^{\frac 1q}k^{\frac{1}{\sigma}}.
$$
\end{enumerate}
\end{Lem}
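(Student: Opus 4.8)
The plan is to follow the same scheme as in Lemmas~\ref{lemma08} and~\ref{lemma09}: embed a suitably scaled copy of a block $V^{m,k}_{r,1}$ (with the second index frozen at $1$) into the intersection and then apply Proposition~\ref{v_dn}. I treat assertion~1; assertion~2 is obtained by interchanging the roles of the two coordinate directions, i.e. of $m$ and $k$, of $p_i$ and $\theta_i$, and of $q$ and $\sigma$.

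First I would fix the parameters. Set $\tilde l=1$ and let $\tilde r$ be defined by $\frac{\nu_1}{\nu_2}=\tilde r^{\,\frac{1}{p_1}-\frac{1}{p_2}}$. The hypothesis that $\frac{\nu_1}{\nu_2}$ lies between $1$ and $m^{\frac{1}{p_1}-\frac{1}{p_2}}$ is precisely what forces $\tilde r\in[1,\,m]$, in either ordering of $p_1$ and $p_2$ (if $p_1\le p_2$ the exponent $\frac1{p_1}-\frac1{p_2}$ is nonnegative and one uses the first branch of the hypothesis, otherwise the second). Put $r=\lfloor\tilde r\rfloor$ and $l=1$. Taking $\lambda=\tilde\lambda$ in Lemma~\ref{incl_1}, so that the interpolated first exponent equals $p=2$, and noting that relation (\ref{n1n2rp1lt1}) collapses to exactly $\frac{\nu_1}{\nu_2}=\tilde r^{\,1/p_1-1/p_2}$ because $\tilde l=1$, I obtain
$$
\nu_1^{1-\tilde\lambda}\nu_2^{\tilde\lambda}\,r^{-\frac12}\,V^{m,k}_{r,1}\subset 4\bigl(\nu_1B^{m,k}_{p_1,\theta_1}\cap\nu_2B^{m,k}_{p_2,\theta_2}\bigr),
$$
where I used that $p=2$ gives $r^{-1/p}=r^{-1/2}$ and $l=1$ gives $l^{-1/\theta}=1$, so the $\theta$-dependence drops out of the scaling.

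Next I would verify that we land in the correct branch of Proposition~\ref{v_dn}. Since $r\le m$ and $n\ge mk^{2/\sigma}$, one has
$$
m^{\frac 2q}k^{\frac{2}{\sigma}}r^{1-\frac 2q}\le m^{\frac 2q}k^{\frac{2}{\sigma}}m^{1-\frac 2q}=mk^{\frac{2}{\sigma}}\le n,
$$
so the second line of (\ref{dn_vmk1}) applies (with $l=1$) and yields $d_n(V^{m,k}_{r,1},\,l^{m,k}_{q,\sigma})\underset{q,\sigma}{\gtrsim}n^{-\frac12}m^{\frac1q}k^{\frac1\sigma}r^{\frac12}$. Combining this with the inclusion above, via the homogeneity and monotonicity of the widths (the factor $4$ being absorbed into $\underset{q,\sigma}{\gtrsim}$), the powers $r^{-1/2}$ and $r^{1/2}$ cancel and I reach
$$
d_n\bigl(\nu_1B^{m,k}_{p_1,\theta_1}\cap\nu_2B^{m,k}_{p_2,\theta_2},\,l^{m,k}_{q,\sigma}\bigr)\underset{q,\sigma}{\gtrsim}\nu_1^{1-\tilde\lambda}\nu_2^{\tilde\lambda}\,n^{-\frac12}m^{\frac1q}k^{\frac1\sigma},
$$
which is the asserted estimate.

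There is no genuine difficulty here; the only points demanding care are the two bookkeeping checks, namely that the hypothesis on $\nu_1/\nu_2$ forces $\tilde r\in[1,\,m]$, and that $n\ge mk^{2/\sigma}$ together with $r\le m$ keeps us in the $n\ge m^{2/q}k^{2/\sigma}r^{1-2/q}$ regime of Proposition~\ref{v_dn} for every admissible $r$. The structural reason the estimate comes out clean is the choice $l=1$: it eliminates all $\theta$- and $\sigma$-dependence from the scaling, so the final bound carries the bare factor $n^{-1/2}$ with no $\lambda$-type power, matching the diffuse regime (\ref{dn_1}) of Theorem~\ref{theorem_a}.
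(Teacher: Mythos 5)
Your proof is correct and takes essentially the same route as the paper's: the paper likewise defines $\tilde r$ by $\frac{\nu_1}{\nu_2}=\tilde r^{\frac{1}{p_1}-\frac{1}{p_2}}$ (written there as $\tilde r=m^\alpha$ with $\alpha\in[0,\,1]$), uses Lemma \ref{incl_1} to embed $\nu_1^{1-\tilde\lambda}\nu_2^{\tilde\lambda}r^{-\frac 12}V^{m,k}_{r,1}$ into $4(\nu_1B^{m,k}_{p_1,\theta_1}\cap\nu_2B^{m,k}_{p_2,\theta_2})$, checks $n\ge m^{\frac 2q}k^{\frac{2}{\sigma}}r^{1-\frac 2q}$ from $r\le m$ and $n\ge mk^{2/\sigma}$, and concludes with the second branch of Proposition \ref{v_dn}. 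Your two bookkeeping checks (that the hypothesis on $\nu_1/\nu_2$ forces $\tilde r\in[1,\,m]$ in either ordering of $p_1,p_2$, and that the correct regime of (\ref{dn_vmk1}) applies) are exactly the points the paper relies on as well.
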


\begin{proof}
We prove assertion 1 (assertion 2 is similar).

Let $r = \lfloor \tilde r\rfloor$, $\tilde r = m^\alpha$, where $\alpha \in [0, \, 1]$ is such that $\frac{\nu_1}{\nu_2} = \tilde r^{\frac{1}{p_1} ‐\frac{1}{p_2}}$. By Lemma \ref{incl_1}, $\nu_1^{1‐\tilde \lambda}\nu_2^{\tilde \lambda} r^{‐\frac 12} V_{r,1}^{m,k} \subset 4(\nu_1B_{p_1,\theta_1}^{m,k} \cap \nu_2B_{p_2,\theta_2}^{m,k})$. Since $n\ge mk^{2/\sigma}$, we have $n\ge m^{\frac 2q} k^{\frac{2}{\sigma}} r^{1‐\frac 2q}$. Therefore,
$$
d_n(\nu_1B_{p_1,\theta_1}^{m,k} \cap \nu_2B_{p_2,\theta_2}^{m,k}, \, l^{m,k}_{q,\sigma}) \gtrsim d_n(\nu_1^{1‐\tilde \lambda}\nu_2^{\tilde \lambda} r^{‐\frac 12} V_{r,1}^{m,k}, \, l^{m,k}_{q,\sigma}) \stackrel{(\ref{dn_vmk1})}{\underset{q,\sigma}{\gtrsim}}
$$
$$
\gtrsim \nu_1^{1‐\tilde \lambda}\nu_2^{\tilde \lambda} r^{‐\frac 12} n^{‐\frac 12} m^{\frac 1q} k^{\frac{1}{\sigma}} r^{\frac 12} = \nu_1^{1‐\tilde \lambda}\nu_2^{\tilde \lambda}n^{‐\frac 12} m^{\frac 1q} k^{\frac{1}{\sigma}}.
$$
This completes the proof.
\end{proof}

\section{Estimates for the widths of an intersection of finite‐dimensional balls}

We prove Theorem \ref{main}. The upper estimate follows from Lemma \ref{emb}. Let us prove the lower estimate.

If $n\le m^{\frac 2q}k^{\frac{2}{\sigma}}$, we use Lemma \ref{lemma01} and (\ref{dn_1})--(\ref{dn_3}).

In what follows, $m^{\frac 2q}k^{\frac{2}{\sigma}} \le n \le \frac{mk}{2}$. Here we consider all cases up to rearrangement of indices  1 and 2.

\vskip 0.2cm

\textbf{1. Case $p_1$, $p_2$, $\theta_1$, $\theta_2 \in [1, \, 2]$.} From Lemma \ref{lemma01} and (\ref{dn_1}) it follows that $$d_n(\nu_1 B_{p_1,\theta_1}^{m,k}\cap \nu_2 B_{p_2,\theta_2}^{m,k}, \, l_{q,\sigma} ^{m,k}) \underset{q,\sigma}{\gtrsim} \min _{j=1,2} \Phi_j(m, \, k, \, n).$$

\textbf{2. Case $p_1$, $p_2 \in [2, \, q]$, $\theta_1$, $\theta_2 \in [2, \, \sigma ]$; we suppose that one of the following conditions holds: a) $q>2$, $\lambda_{p_i,q}\le \lambda_{\theta_i, \sigma}$, $i=1, \, 2$, b) $\sigma>2$, $\lambda_{p_i,q}\ge \lambda_{\theta_i, \sigma}$, $i=1, \, 2$.}

We prove that $d_n(\nu_1 B_{p_1,\theta_1}^{m,k}\cap \nu_2 B_{p_2,\theta_2}^{m,k}, \, l_{q,\sigma} ^{m,k}) \underset{q,\sigma}{\gtrsim} \min _{j=1,2} \Phi_j(m, \, k, \, n)$.

Consider case a); case b) is similar.

For $m^{\frac 2q}k^{\frac{2}{\sigma}}\le n \le mk^{\frac{2}{\sigma}}$ we use Lemma \ref{lemma02} and the estimate
$$
\Phi_i(m, \, k, \, n)=\nu_i d_n(B_{p_i,\theta_i}^{m,k}, \, l_{q,\sigma} ^{m,k}) \stackrel{(\ref{dn_2})}{\underset{q,\sigma}{\asymp}} \nu_i(n^{‐\frac 12} m^{\frac 1q} k^{\frac{1}{\sigma}}) ^{\frac{1/p_i‐1/q}{1/2‐1/q}}, \quad i=1,\, 2.
$$

For $n\ge mk^{\frac{2}{\sigma}}$ we use Lemma \ref{lemma03} and the estimate
$$
\Phi_i(m, \, k, \, n)=\nu_id_n(B_{p_i,\theta_i}^{m,k}, \, l_{q,\sigma} ^{m,k}) \stackrel{(\ref{dn_2})}{\underset{q,\sigma}{\asymp}} \nu_i m^{\frac 1q‐\frac{1}{p_i}}(n^{‐\frac 12} m^{\frac 12} k^{\frac{1}{\sigma}}) ^{\frac{1/\theta_i‐1/\sigma}{1/2‐1/\sigma}}, \quad i=1,\, 2.
$$

\textbf{3. Case $p_1$, $p_2 \in [2, \, q]$, $\theta_1$, $\theta_2 \in [2, \, \sigma]$, $\lambda_{p_1,q}< \lambda_{\theta_1, \sigma}$, $\lambda_{p_2,q}> \lambda_{\theta_2, \sigma}$.}

From the last two inequalities it follows that
\begin{align}
\label{p1p2t1t2}
\frac{1/p_1‐1/p_2}{1/2‐1/q}<\frac{1/\theta_1‐1/\theta_2}{1/2‐1/\sigma}.
\end{align}
We claim that $$d_n(\nu_1B^{m,k}_{p_1,\theta_1} \cap \nu_2 B_{p_2,\theta_2}^{m,k}, \, l^{m,k}_{q,\sigma}) \underset{q,\sigma}{\gtrsim} \min _{j=1,2,5} \Phi_j(m, \, k, \, n).$$

{\bf Subcase $m^{\frac 2q} k^{\frac{2}{\sigma}} \le n\le \min \{m k^{\frac{2}{\sigma}}, \, m^{\frac 2q} k\}$}.  We have
$$
\Phi_1(m, \, k, \, n)=\nu_1d_n(B^{m,k}_{p_1,\theta_1}, \, l^{m,k}_{q,\sigma}) \stackrel{(\ref{dn_2})}{\underset{q,\sigma}{\asymp}} \nu_1 (n^{‐\frac 12} m^{\frac 1q} k^{\frac{1}{\sigma}})^{\frac{1/p_1‐1/q}{1/2‐1/q}},
$$
$$
\Phi_2(m, \, k, \, n)=\nu_2d_n(B^{m,k}_{p_2,\theta_2}, \, l^{m,k}_{q,\sigma}) \stackrel{(\ref{dn_3})}{\underset{q,\sigma}{\asymp}} \nu_2 (n^{‐\frac 12} m^{\frac 1q} k^{\frac{1}{\sigma}})^{\frac{1/\theta_2‐1/\sigma}{1/2‐1/\sigma}},
$$
$$
\Phi_5(m, \, k, \, n)=\nu_1^{1‐\lambda} \nu_2^\lambda d_n(B^{m,k}_{p,\theta}, \, l^{m,k}_{q,\sigma}) \stackrel{(\ref{dn_2})}{\underset{q,\sigma}{\asymp}}$$$$\asymp \nu_1^{1‐\lambda}\nu_2^\lambda (n^{‐\frac 12} m^{\frac 1q} k^{\frac{1}{\sigma}})^{\frac{1/p‐1/q}{1/2‐1/q}} \stackrel{(\ref{1234})}{=} \nu_1^{1‐\lambda}\nu_2^\lambda (n^{‐\frac 12} m^{\frac 1q} k^{\frac{1}{\sigma}})^{\frac{1/\theta‐1/\sigma}{1/2‐1/\sigma}}
$$
(the last equatity follows from the choice of $\lambda$ in the definition of $\Phi_5(m, \, k, \, n)$; see the statement of Theorem \ref{main}).
Also from the definition of $\lambda$ it follows that
\begin{align}
\label{1le_int} \nu_1 (n^{‐\frac 12} m^{\frac 1q} k^{\frac{1}{\sigma}})^{\frac{1/p_1‐1/q}{1/2‐1/q}} \le \nu_1^{1‐\lambda}\nu_2^\lambda (n^{‐\frac 12} m^{\frac 1q} k^{\frac{1}{\sigma}})^{\frac{1/p‐1/q}{1/2‐1/q}} \; \Leftrightarrow \; \frac{\nu_1}{\nu_2} \le (n^{\frac 12} m^{‐\frac 1q} k^{‐\frac{1}{\sigma}})^{\frac{1/p_1‐1/p_2} {1/2‐1/q}},
\end{align}
\begin{align}
\label{int_le2} \nu_1^{1‐\lambda}\nu_2^\lambda (n^{‐\frac 12} m^{\frac 1q} k^{\frac{1}{\sigma}})^{\frac{1/\theta‐1/\sigma}{1/2‐1/\sigma}} \le \nu_2 (n^{‐\frac 12} m^{\frac 1q} k^{\frac{1}{\sigma}})^{\frac{1/\theta_2‐1/\sigma}{1/2‐1/\sigma}} \; \Leftrightarrow \; \frac{\nu_1}{\nu_2} \le (n^{\frac 12} m^{‐\frac 1q} k^{‐\frac{1}{\sigma}})^{\frac{1/\theta_1‐1/\theta_2}{1/2‐1/\sigma}}.
\end{align}
From (\ref{p1p2t1t2}) and the condition $n^{\frac 12} m^{‐\frac 1q} k^{‐\frac{1}{\sigma}}\ge 1$ it follows that $$(n^{\frac 12} m^{‐\frac 1q} k^{‐\frac{1}{\sigma}})^{\frac{1/p_1‐1/p_2} {1/2‐1/q}} \le (n^{\frac 12} m^{‐\frac 1q} k^{‐\frac{1}{\sigma}})^{\frac{1/\theta_1‐1/\theta_2}{1/2‐1/\sigma}}.$$

By (\ref{1le_int}), (\ref{int_le2}), for $\frac{\nu_1}{\nu_2} \le (n^{\frac 12} m^{‐\frac 1q} k^{‐\frac{1}{\sigma}})^{\frac{1/p_1‐1/p_2} {1/2‐1/q}}$ we have
$$
\min _{j=1,2,5} \Phi_j(m, \, k, \, n) \underset{q,\sigma}{\asymp} \nu_1 (n^{‐\frac 12} m^{\frac 1q} k^{\frac{1}{\sigma}})^{\frac{1/p_1‐1/q}{1/2‐1/q}}.
$$
Hence, in order to estimare $d_n(\nu_1B^{m,k}_{p_1,\theta_1} \cap \nu_2 B_{p_2,\theta_2}^{m,k}, \, l^{m,k}_{q,\sigma})$ from below, it suffices to apply Lemma \ref{lemma02}.
The case $\frac{\nu_1}{\nu_2} \ge (n^{\frac 12} m^{‐\frac 1q} k^{‐\frac{1}{\sigma}})^{\frac{1/\theta_1‐1/\theta_2}{1/2‐1/\sigma}}$ can be considered similarly. Let $$(n^{\frac 12} m^{‐\frac 1q} k^{‐\frac{1}{\sigma}})^{\frac{1/p_1‐1/p_2} {1/2‐1/q}} \le \frac{\nu_1}{\nu_2} \le (n^{\frac 12} m^{‐\frac 1q} k^{‐\frac{1}{\sigma}})^{\frac{1/\theta_1‐1/\theta_2}{1/2‐1/\sigma}}.$$ 
From (\ref{1le_int}), (\ref{int_le2}) it follows that $$\min _{j=1,2,5} \Phi_j(m, \, k, \, n) \underset{q,\sigma}{\asymp} \nu_1^{1‐\lambda}\nu_2^\lambda (n^{‐\frac 12} m^{\frac 1q} k^{\frac{1}{\sigma}})^{\frac{1/p‐1/q}{1/2‐1/q}}.$$
It suffices to apply Lemma \ref{lemma04}.

{\bf Subcase $km^{\frac 2q}\le n \le mk^{\frac{2}{\sigma}}$.} We prove that
$$
(n^{\frac 12} m^{‐\frac 1q} k^{‐\frac{1}{\sigma}})^{\frac{1/p_1‐1/p_2}{1/2‐1/q}} \le k^{\frac{1}{\theta_1}‐\frac{1}{\theta_2}} (n^{\frac 12} m^{‐\frac 1q} k^{‐\frac{1}{2}}) ^{\frac{1/p_1‐1/p_2}{1/2‐1/q}}.
$$
It is equivalent to $1\le k^{\frac{1}{\theta_1} ‐\frac{1}{\theta_2} ‐ \frac{1/2‐1/\sigma}{1/2‐1/q}\left(\frac{1}{p_1}‐\frac{1}{p_2}\right)}$; this follows from (\ref{p1p2t1t2}).

We apply (\ref{dn_2}), (\ref{dn_3}) and compare the right‐hand sides of the corresponding order equalities, taking into account (\ref{1234}) (as in the previous case).

If $\frac{\nu_1}{\nu_2} \le  (n^{‐\frac 12} m^{\frac 1q} k^{\frac{1}{\sigma}}) ^{\frac{1/p_2‐1/p_1}{1/2‐1/q}}$,
we have
$$
\min_{j=1,2,5} \Phi_j(m,\, k, \, n) \underset{q,\sigma}{\asymp} \nu_1 (n^{‐\frac 12} m^{\frac 1q} k^{\frac{1}{\sigma}})^{\frac{1/p_1‐1/q}{1/2‐1/q}};
$$
it remains to apply Lemma \ref{lemma02}.
If $\frac{\nu_1}{\nu_2} \ge k^{\frac{1}{\theta_1}‐\frac{1}{\theta_2}} (n^{\frac 12} m^{‐\frac 1q} k^{‐\frac{1}{2}}) ^{\frac{1/p_1‐1/p_2}{1/2‐1/q}}$, then 
$$
\min_{j=1,2,5} \Phi_j(m,\, k, \, n) \underset{q,\sigma}{\asymp} \nu_2 k^{\frac{1}{\sigma}‐ \frac{1}{\theta_2}}(n^{‐\frac 12} m^{\frac 1q} k^{\frac{1}{2}})^{\frac{1/p_2‐1/q}{1/2‐1/q}};
$$
we apply Lemma \ref{lemma03} (rearranging the indices  1 and 2). If
$$
(n^{\frac 12} m^{‐\frac 1q} k^{‐\frac{1}{\sigma}})^{\frac{1/p_1‐1/p_2}{1/2‐1/q}} \le \frac{\nu_1}{\nu_2}\le k^{\frac{1}{\theta_1}‐\frac{1}{\theta_2}} (n^{\frac 12} m^{‐\frac 1q} k^{‐\frac{1}{2}}) ^{\frac{1/p_1‐1/p_2}{1/2‐1/q}},
$$
then
$$
\min_{j=1,2,5} \Phi_j(m,\, k, \, n) \underset{q,\sigma}{\asymp} \nu_1^{1‐\lambda}\nu_2^\lambda (n^{‐\frac 12} m^{\frac 1q} k^{\frac{1}{\sigma}})^{\frac{1/p‐1/q}{1/2‐1/q}};
$$
we apply Lemma \ref{lemma05}.

{\bf Subcase $mk^{\frac{2}{\sigma}} \le n \le km^{\frac 2q}$} is similar.

{\bf Subcase $\max \{mk^{\frac{2}{\sigma}}, \, km^{\frac 2q}\} \le n \le \frac{mk}{2}$}. 

Notice that 
$$
m^{\frac{1}{p_1}‐\frac{1}{p_2}} (n^{\frac 12} m^{‐\frac 12} k^{‐\frac{1}{\sigma}})^{\frac{1/\theta_1‐1/\theta_2}{1/2‐1/\sigma}} \le k^{\frac{1}{\theta_1} ‐\frac{1}{\theta_2}}(n^{\frac 12} m^{‐\frac 1q} k^{‐\frac{1}{2}}) ^{\frac{1/p_1‐1/p_2}{1/2‐1/q}}.
$$
Indeed, it is equivalent to
$$
(mkn^{‐1})^{\frac{1/\theta_1‐1/\theta_2}{1/2‐1/\sigma}} \ge (mkn^{‐1})^{\frac{1/p_1‐1/p_2}{1/2‐1/q}}.
$$
This inequality follows from (\ref{p1p2t1t2}).

We apply (\ref{dn_2}), (\ref{dn_3}), compare the right‐hand sides of these order equalities and take into account (\ref{1234}). The following assertions hold:
\begin{itemize}
\item if $\frac{\nu_1}{\nu_2} \le m^{\frac{1}{p_1}‐\frac{1}{p_2}} (n^{\frac 12} m^{‐\frac 12} k^{‐\frac{1}{\sigma}})^{\frac{1/\theta_1‐1/\theta_2}{1/2‐1/\sigma}}$, then $$\min_{j=1,2,5}\Phi_j(m, \, n, \, k) \underset{q,\sigma}{\asymp}\nu_1 m^{\frac{1}{q}‐\frac{1}{p_1}}(n^{‐\frac 12} m^{\frac 12} k^{\frac{1}{\sigma}})^{\frac{1/\theta_1‐1/\sigma}{1/2‐1/\sigma}};$$ we use Lemma \ref{lemma03};
\item if $\frac{\nu_1}{\nu_2} \ge k^{\frac{1}{\theta_1} ‐\frac{1}{\theta_2}}(n^{\frac 12} m^{‐\frac 1q} k^{‐\frac{1}{2}}) ^{\frac{1/p_1‐1/p_2}{1/2‐1/q}}$, then $$\min_{j=1,2,5}\Phi_j(m, \, n, \, k) \underset{q,\sigma}{\asymp}\nu_2 k^{\frac{1}{\sigma}‐ \frac{1}{\theta_2}}(n^{‐\frac 12} m^{\frac 1q} k^{\frac{1}{2}})^{\frac{1/p_2‐1/q}{1/2‐1/q}};$$ we use Lemma \ref{lemma03};
\item if $m^{\frac{1}{p_1}‐\frac{1}{p_2}} (n^{\frac 12} m^{‐\frac 12} k^{‐\frac{1}{\sigma}})^{\frac{1/\theta_1‐1/\theta_2}{1/2‐1/\sigma}} \le \frac{\nu_1}{\nu_2} \le k^{\frac{1}{\theta_1} ‐\frac{1}{\theta_2}}(n^{\frac 12} m^{‐\frac 1q} k^{‐\frac{1}{2}}) ^{\frac{1/p_1‐1/p_2}{1/2‐1/q}}$, then
$$
\min_{j=1,2,5}\Phi_j(m, \, n, \, k) \underset{q,\sigma}{\asymp} \nu_1^{1‐\lambda}\nu_2^\lambda m^{\frac 1q‐\frac 1p}(n^{‐\frac 12} m^{\frac 12} k^{\frac{1}{\sigma}})^{\frac{1/\theta‐1/\sigma}{1/2‐1/\sigma}};
$$
we apply Lemma \ref{lemma06}.
\end{itemize}

\textbf{4a. Case $p_1, \, p_2\in [2, \, q]$, $\theta_1\in (2, \, \sigma]$, $\theta_2\in [1, \, 2)$, $\lambda_{p_1,q} \le \lambda _{\theta_1,\sigma}$.} We claim that
$$
d_n(\nu_1B_{p_1,\theta_1}^{m,k}\cap \nu_2B_{p_2,\theta_2}^{m,k}, \, l_{q,\sigma}^{m,k}) \underset{q,\sigma}{\asymp} \min _{j=1,2,4} \Phi_j(m, \, k, \, n).
$$

{\bf Subcase $m^{2/q}k^{2/\sigma} \le n\le mk^{2/\sigma}$.} We have
$$
d_n(\nu_1B_{p_1,\theta_1}^{m,k}, \, l_{q,\sigma}^{m,k}) \stackrel{(\ref{dn_2})}{\underset{q,\sigma}{\asymp}} \nu_1(n^{‐\frac 12}m^{\frac 1q} k^{\frac{1}{\sigma}})^{\frac{1/p_1‐1/q}{1/2‐1/q}}=:a,
$$
$$
d_n(\nu_2B_{p_2,\theta_2}^{m,k}, \, l_{q,\sigma}^{m,k}) \stackrel{(\ref{dn_2})}{\underset{q,\sigma}{\asymp}} \nu_2(n^{‐\frac 12}m^{\frac 1q} k^{\frac{1}{\sigma}})^{\frac{1/p_2‐1/q}{1/2‐1/q}}=:b,
$$
$$
d_n(\nu_1^{1‐\tilde \mu}\nu_2^{\tilde \mu}B_{\tilde p,2}^{m,k}, \, l_{q,\sigma}^{m,k}) \stackrel{(\ref{dn_2})}{\underset{q,\sigma}{\asymp}} \nu_1^{1‐\tilde \mu} \nu_2^{\tilde \mu}(n^{‐\frac 12}m^{\frac 1q} k^{\frac{1}{\sigma}})^{\frac{1/\tilde p‐1/q}{1/2‐1/q}}= a^{1‐\tilde \mu}b^{\tilde \mu}.
$$
It suffices to prove that for $\frac{\nu_1}{\nu_2} \le (n^{\frac 12} m^{‐\frac 1q}k^{‐\frac{1}{\sigma}})^{\frac{1/p_1‐1/p_2}{1/2‐1/q}}$ 
$$
d_n(\nu_1B_{p_1,\theta_1}^{m,k}\cap \nu_2 B_{p_2,\theta_2}^{m,k}, \, l_{q,\sigma}^{m,k}) \underset{q,\sigma}{\gtrsim} \nu_1(n^{‐\frac 12}m^{\frac 1q} k^{\frac{1}{\sigma}})^{\frac{1/p_1‐1/q}{1/2‐1/q}},
$$
and for $\frac{\nu_1}{\nu_2} \ge (n^{\frac 12} m^{‐\frac 1q}k^{‐\frac{1}{\sigma}})^{\frac{1/p_1‐1/p_2}{1/2‐1/q}}$,
$$
d_n(\nu_1B_{p_1,\theta_1}^{m,k}\cap \nu_2 B_{p_2,\theta_2}^{m,k}, \, l_{q,\sigma}^{m,k}) \underset{q,\sigma}{\gtrsim} \nu_2(n^{‐\frac 12}m^{\frac 1q} k^{\frac{1}{\sigma}})^{\frac{1/p_2‐1/q}{1/2‐1/q}}.
$$
It follows from Lemma \ref{lemma02}.

{\bf Subcase $mk^{2/\sigma} \le n \le \frac{mk}{2}$.}
Notice that $m^{\frac{1}{p_1}‐\frac{1}{p_2}} (n^{\frac 12}m^{‐\frac 12}k^{‐\frac{1}{\sigma}})^{\frac{1/\theta_1‐1/\theta_2}{1/2‐1/\sigma}}\le m^{\frac{1}{p_1}‐\frac{1}{p_2}}$.

We apply (\ref{dn_2}) and compare the right‐hand sides of the corresponding order equalities. If $\frac{\nu_1}{\nu_2} \le m^{\frac{1}{p_1}‐\frac{1}{p_2}} (n^{\frac 12}m^{‐\frac 12}k^{‐\frac{1}{\sigma}})^{\frac{1/\theta_1‐1/\theta_2}{1/2‐1/\sigma}}$, then $$\min _{j=1,2,4}\Phi_j(m, \, k, \, n) \underset{q,\sigma}{\asymp} \nu_1 m^{\frac 1q‐\frac{1}{p_1}} (n^{‐\frac 12}m^{\frac 12} k^{\frac{1}{\sigma}}) ^{\frac{1/\theta_1‐1/\sigma}{1/2‐1/\sigma}};$$ we use Lemma \ref{lemma03}. If $\frac{\nu_1}{\nu_2}\ge m^{\frac{1}{p_1} ‐\frac{1}{p_2}}$, then $$\min _{j=1,2,4}\Phi_j(m, \, k, \, n) \underset{q,\sigma}{\asymp} \nu_2 m^{\frac 1q‐\frac{1}{p_2}+\frac 12} k^{\frac{1}{\sigma}} n^{‐\frac 12};$$ here we use Lemma \ref{lemma07}. If
$$
m^{\frac{1}{p_1}‐\frac{1}{p_2}} (n^{\frac 12}m^{‐\frac 12}k^{‐\frac{1}{\sigma}})^{\frac{1/\theta_1‐1/\theta_2}{1/2‐1/\sigma}} \le \frac{\nu_1}{\nu_2} \le m^{\frac{1}{p_1}‐\frac{1}{p_2}},
$$
we have $$\min _{j=1,2,4}\Phi_j(m, \, k, \, n) \underset{q,\sigma}{\asymp} \nu_1^{1‐\tilde \mu}\nu_2^{\tilde \mu} m^{\frac 1q‐\frac{1}{\tilde p}+\frac 12} k^{\frac{1}{\sigma}} n^{‐\frac 12};$$ now we allpy Lemma \ref{lemma08}.

\textbf{4b. Case $p_1\in (2, \, q]$, $p_2\in [1, \, 2)$, $\theta_1, \, \theta_2\in [2, \, \sigma]$, $\lambda_{p_1,q} \ge \lambda _{\theta_1,\sigma}$} is similar.

\vskip 0.2cm

\textbf{5a. Case $q>2$, $\sigma>2$, $p_1, \, p_2\in [2, \, q]$, $\theta_1\in [2, \, \sigma]$, $\theta_2\in [1, \, 2]$, $\lambda_{p_1,q}>\lambda_{\theta_1,\sigma}$.}

We claim that $$d_n(\nu_1B^{m,k}_{p_1,\theta_1}\cap \nu_2 B^{m,k}_{p_2,\theta_2}, \, l^{m,k}_{q,\sigma}) \underset{q,\sigma}{\gtrsim} \min _{j=1,2,4,5} \Phi_j(m, \, k, \, n).$$

Since $\lambda_{p_1,q}> \lambda_{\theta_1,\sigma}$, $\lambda_{\tilde p, q}\le \lambda_{2,\sigma}$, we have $\Phi_5(m, \, k, \, n)<\infty$ and $\tilde \mu \ge \lambda$. In addition, from $\lambda_{p_1,q}>\lambda_{\theta_1,\sigma}$ and $\theta_2\le 2 \le p_2$ we get
\begin{align}
\label{pp1tt2} \frac{1/p_1‐1/p_2}{1/2‐1/q} > \frac{1/\theta_1‐1/\theta_2}{1/2‐1/\sigma}.
\end{align}

We use (\ref{dn_2}), (\ref{dn_3}) and compare the right‐hand sides of these order equalities taking into account (\ref{1234}) and the inequalities $0\le \lambda \le \tilde \mu\le 1$.

If $m^{\frac 2q}k^{\frac{2}{\sigma}} \le n\le \min \{m^{\frac 2q}k, \, mk^{\frac{2}{\sigma}}\}$, we apply Lemma \ref{lemma02} for $\frac{\nu_1}{\nu_2} \le (n^{\frac 12} m^{‐\frac 1q} k^{‐\frac{1}{\sigma}})^{\frac{1/\theta_1‐1/\theta_2}{1/2‐1/\sigma}}$ or $\frac{\nu_1}{\nu_2} \ge (n^{\frac 12} m^{‐\frac 1q} k^{‐\frac{1}{\sigma}})^{\frac{1/p_1‐1/p_2}{1/2‐1/q}}$, and we use Lemma \ref{lemma04} for $(n^{\frac 12} m^{‐\frac 1q} k^{‐\frac{1}{\sigma}})^{\frac{1/\theta_1‐1/\theta_2}{1/2‐1/\sigma}} \le \frac{\nu_1}{\nu_2} \le (n^{\frac 12} m^{‐\frac 1q} k^{‐\frac{1}{\sigma}})^{\frac{1/p_1‐1/p_2}{1/2‐1/q}}$ (we argue as in Case 3).

{\bf Subcase $mk^{\frac{2}{\sigma}} \le n\le m^{\frac 2q}k$.} 

Notice that
$$
(n^{\frac 12} m^{‐\frac 1q} k^{‐\frac{1}{\sigma}}) ^{\frac{1/\theta_1‐1/\theta_2}{1/2‐1/\sigma}} \stackrel{(\ref{pp1tt2})}{\le} m^{\frac{1}{p_1}‐\frac{1}{p_2}}(n^{\frac 12} m^{‐\frac 12} k^{‐\frac{1}{\sigma}}) ^{\frac{1/\theta_1‐1/\theta_2}{1/2‐1/\sigma}} \le m^{\frac{1}{p_1}‐\frac{1}{p_2}}.
$$

We use (\ref{dn_2}), (\ref{dn_3}) taking into account (\ref{1234}). If $\frac{\nu_1}{\nu_2} \le (n^{\frac 12} m^{‐\frac 1q} k^{‐\frac{1}{\sigma}}) ^{\frac{1/\theta_1‐1/\theta_2}{1/2‐1/\sigma}}$, then $$\min_{j=1,2,4,5} \Phi_j(m,\,k,\,n) \underset{q,\sigma}{\asymp} \nu_1 (n^{‐\frac 12} m^{\frac 1q} k^{\frac{1}{\sigma}}) ^{\frac{1/\theta_1‐1/\sigma}{1/2‐1/\sigma}};$$ we use Lemma \ref{lemma02}. If $\frac{\nu_1}{\nu_2} \ge m^{\frac{1}{p_1} ‐\frac{1}{p_2}}$, then $$\min_{j=1,2,4,5} \Phi_j(m,\,k,\,n) \underset{q,\sigma}{\asymp} \nu_2 n^{‐\frac 12} m^{\frac 1q ‐\frac{1}{p_2} +\frac 12} k^{\frac{1}{\sigma}};$$ we use Lemma \ref{lemma07}. If
$$
(n^{\frac 12} m^{‐\frac 1q} k^{‐\frac{1}{\sigma}}) ^{\frac{1/\theta_1‐1/\theta_2}{1/2‐1/\sigma}} \le \frac{\nu_1}{\nu_2}\le m^{\frac{1}{p_1}‐\frac{1}{p_2}}(n^{\frac 12} m^{‐\frac 12} k^{‐\frac{1}{\sigma}}) ^{\frac{1/\theta_1‐1/\theta_2}{1/2‐1/\sigma}},
$$
then $$\min_{j=1,2,4,5} \Phi_j(m,\,k,\,n) \underset{q,\sigma}{\asymp} \nu_1^{1‐\lambda} \nu_2^\lambda (n^{‐\frac 12} m^{\frac 1q} k^{\frac{1}{\sigma}}) ^{\frac{1/\theta‐1/\sigma}{1/2‐1/\sigma}} = \nu_1^{1‐\lambda} \nu_2^\lambda m^{\frac 1q‐\frac 1p}(n^{‐\frac 12} m^{\frac 12} k^{\frac{1}{\sigma}}) ^{\frac{1/\theta‐1/\sigma}{1/2‐1/\sigma}};$$
we apply Lemma \ref{lemma05}. If
$$
m^{\frac{1}{p_1}‐\frac{1}{p_2}}(n^{\frac 12} m^{‐\frac 12} k^{‐\frac{1}{\sigma}}) ^{\frac{1/\theta_1‐1/\theta_2}{1/2‐1/\sigma}} \le \frac{\nu_1}{\nu_2}\le m^{\frac{1}{p_1}‐\frac{1}{p_2}},
$$
then $$\min_{j=1,2,4,5} \Phi_j(m,\,k,\,n) \underset{q,\sigma}{\asymp} \nu_1^{1‐\tilde \mu} \nu_2^{\tilde\mu} m^{\frac 1q‐\frac{1}{\tilde p}} n^{‐\frac 12} m^{\frac 12} k^{\frac{1}{\sigma}}.$$
we apply Lemma \ref{lemma08}.

{\bf Subcase $m^{2/q}k \le n\le mk^{2/\sigma}$.} 

Notice that
$$k^{\frac{1}{\theta_1}- \frac{1}{\theta_2}} (n^{\frac 12}m^{‐\frac 1q} k^{‐\frac 12}) ^{\frac{1/p_1‐1/p_2}{1/2‐1/q}} \stackrel{(\ref{pp1tt2})}{\le} (n^{\frac 12} m^{‐\frac 1q} k^{‐\frac{1}{\sigma}}) ^{\frac{1/p_1‐1/p_2}{1/2‐1/q}}.$$

If $\frac{\nu_1}{\nu_2} \le k^{\frac{1}{\theta_1} ‐\frac{1}{\theta_2}} (n^{\frac 12}m^{‐\frac 1q} k^{‐\frac 12}) ^{\frac{1/p_1‐1/p_2}{1/2‐1/q}}$, then
$$
\min _{j=1,2,4,5} \Phi_j(m, \, k, \, n) \underset{q,\sigma}{\asymp} \nu_1 k^{\frac{1}{\sigma}‐\frac{1}{\theta_1}}(n^{‐\frac 12} m^{\frac 1q} k^{\frac{1}{2}}) ^{\frac{1/p_1‐1/q}{1/2‐1/q}};
$$
we use Lemma \ref{lemma03}. If $\frac{\nu_1}{\nu_2}\ge (n^{\frac 12} m^{‐\frac 1q} k^{‐\frac{1}{\sigma}}) ^{\frac{1/p_1‐1/p_2}{1/2‐1/q}}$, then
$$
\min _{j=1,2,4,5} \Phi_j(m, \, k, \, n) \underset{q,\sigma}{\asymp} \nu_2 (n^{‐\frac 12} m^{\frac 1q} k^{\frac{1}{\sigma}})^{\frac{1/p_2‐1/q}{1/2‐1/q}};
$$
we apply Lemma \ref{lemma02}. If
$$k^{\frac{1}{\theta_1}‐ \frac{1}{\theta_2}} (n^{\frac 12}m^{‐\frac 1q} k^{‐\frac 12}) ^{\frac{1/p_1‐1/p_2}{1/2‐1/q}} \le \frac{\nu_1}{\nu_2} \le (n^{\frac 12} m^{‐\frac 1q} k^{‐\frac{1}{\sigma}}) ^{\frac{1/p_1‐1/p_2}{1/2‐1/q}},$$
then $$\min _{j=1,2,4,5} \Phi_j(m, \, k, \, n) \underset{q,\sigma}{\asymp} \nu_1^{1‐\lambda} \nu_2^\lambda (n^{‐\frac 12} m^{\frac 1q} k^{\frac{1}{\sigma}}) ^{\frac{1/p‐1/q}{1/2‐1/q}};$$ we apply Lemma \ref{lemma05}.

{\bf Subcase $n\ge \max\{mk^{2/\sigma}, \, m^{2/q}k\}$.} 

Notice that
$$
(n^{\frac 12} m^{‐\frac 1q} k^{‐\frac 12})^{\frac{1/p_1‐1/p_2}{1/2‐1/q}} k^{\frac{1}{\theta_1} ‐\frac{1}{\theta_2}} \stackrel{(\ref{pp1tt2})}{\le} m^{\frac{1}{p_1} ‐\frac{1}{p_2}} (n^{\frac 12} m^{‐\frac 12} k^{‐\frac{1}{\sigma}}) ^{\frac{1/\theta_1‐1/\theta_2}{1/2‐1/\sigma}} \le m^{\frac{1}{p_1} ‐\frac{1}{p_2}}.
$$

If $\frac{\nu_1}{\nu_2} \le (n^{\frac 12} m^{‐\frac 1q} k^{‐\frac 12})^{\frac{1/p_1‐1/p_2}{1/2‐1/q}} k^{\frac{1}{\theta_1} ‐\frac{1}{\theta_2}}$, then 
$$
\min _{j=1,2,4,5} \Phi_j(m, \, k, \, n) \underset{q,\sigma}{\asymp} \nu_1 k^{\frac{1}{\sigma}‐\frac{1}{\theta_1}}(n^{‐\frac 12} m^{\frac 1q} k^{\frac{1}{2}}) ^{\frac{1/p_1‐1/q}{1/2‐1/q}};
$$
we use Lemma \ref{lemma03}.
If $\frac{\nu_1}{\nu_2} \ge m^{\frac{1}{p_1} ‐ \frac{1}{p_2}}$, then 
$$
\min _{j=1,2,4,5} \Phi_j(m, \, k, \, n) \underset{q,\sigma}{\asymp} \nu_2 m^{\frac 1q‐\frac{1}{p_2}} n^{‐\frac 12} m^{\frac 12} k^{\frac{1}{\sigma}};
$$ 
we apply Lemma \ref{lemma07}.
If $m^{\frac{1}{p_1} ‐\frac{1}{p_2}} (n^{\frac 12} m^{‐\frac 12} k^{‐\frac{1}{\sigma}}) ^{\frac{1/\theta_1‐1/\theta_2}{1/2‐1/\sigma}} \le \frac{\nu_1}{\nu_2} \le m^{\frac{1}{p_1} ‐ \frac{1}{p_2}}$, then
$$
\min _{j=1,2,4,5} \Phi_j(m, \, k, \, n) \underset{q,\sigma}{\asymp} \nu_1^{1‐\tilde \mu} \nu_2^{\tilde\mu} n^{‐\frac 12} m^{\frac 1q‐\frac{1}{\tilde p} +\frac 12} k^{\frac{1}{\sigma}};
$$
we use Lemma \ref{lemma08}. If
$$
(n^{\frac 12} m^{‐\frac 1q} k^{‐\frac 12})^{\frac{1/p_1‐1/p_2}{1/2‐1/q}} k^{\frac{1}{\theta_1} ‐\frac{1}{\theta_2}}\le \frac{\nu_1}{\nu_2} \le m^{\frac{1}{p_1} ‐\frac{1}{p_2}} (n^{\frac 12} m^{‐\frac 12} k^{‐\frac{1}{\sigma}}) ^{\frac{1/\theta_1‐1/\theta_2}{1/2‐1/\sigma}},
$$
then 
$$
\min _{j=1,2,4,5} \Phi_j(m, \, k, \, n) \underset{q,\sigma}{\asymp} \nu_1^{1‐\lambda} \nu_2^\lambda m^{\frac 1q‐\frac{1}{p}}(n^{‐\frac 12} m^{\frac 12} k^{\frac{1}{\sigma}}) ^{\frac{1/\theta‐1/\sigma}{1/2‐1/\sigma}};
$$
we apply Lemma \ref{lemma06}.

\vskip 0.2cm

\textbf{5b. Case $q>2$, $\sigma>2$, $p_1\in [2, \, q]$, $p_2\in [1, \, 2]$, $\theta_1, \, \theta_2\in [2, \, \sigma]$, $\lambda_{p_1,q}<\lambda_{\theta_1,\sigma}$} is similar.

\vskip 0.2cm

\textbf{6a. Case $p_1, \, \theta_1, \, \theta_2\in [1, \, 2]$, $p_2\in [2, \, q]$.} We claim that
$$
d_n(\nu_1B^{m,k}_{p_1,\theta_1} \cap \nu_2B^{m,k}_{p_2,\theta_2}, \, l^{m,k}_{q,\sigma}) \underset{q, \sigma}{\gtrsim} \min _{1\le j\le 3} \Phi_j(m,\, k, \, n).
$$

{\bf Subcase $m^{\frac 2q}k^{\frac{2}{\sigma}} \le n\le mk^{2/\sigma}$.} 

Notice that $1\le (n^{\frac 12}m^{‐\frac 1q}k^{‐\frac{1}{\sigma}})^{\frac{1/p_1‐1/p_2}{1/2‐1/q}}$.

If $\frac{\nu_1}{\nu_2}\le 1$, then 
$$
\min _{1\le j\le 3} \Phi(m,\, k, \, n) \underset{q,\sigma}{\asymp} \nu_1 n^{‐\frac 12}m^{\frac 1q}k^{\frac{1}{\sigma}};
$$
we use Lemma \ref{lemma01}. If $\frac{\nu_1}{\nu_2} \ge (n^{\frac 12}m^{‐\frac 1q}k^{‐\frac{1}{\sigma}})^{\frac{1/p_1‐1/p_2}{1/2‐1/q}}$, then
$$
\min _{1\le j\le 3} \Phi(m,\, k, \, n) \underset{q,\sigma}{\asymp} \nu_2 (n^{‐\frac 12}m^{\frac 1q}k^{\frac{1}{\sigma}}) ^{\frac{1/p_2‐1/q}{1/2‐1/q}};
$$
we apply Lemma \ref{lemma02}. If $1\le \frac{\nu_1}{\nu_2} \le (n^{\frac 12}m^{‐\frac 1q}k^{‐\frac{1}{\sigma}})^{\frac{1/p_1‐1/p_2}{1/2‐1/q}}$, then
$$
\min _{1\le j\le 3} \Phi(m,\, k, \, n) \underset{q,\sigma}{\asymp} \nu_1^{1‐\tilde \lambda}\nu_2^{\tilde \lambda} n^{‐\frac 12}m^{\frac 1q}k^{\frac{1}{\sigma}};
$$
we use Lemma \ref{lemma09}.

{\bf Subcase $n\ge mk^{2/\sigma}$.} Notice that $1\le m^{\frac{1}{p_1}‐\frac{1}{p_2}}$.

If $\frac{\nu_1}{\nu_2} \le 1$, then
$$
\min _{1\le j\le 3} \Phi(m,\, k, \, n) \underset{q,\sigma}{\asymp} \nu_1 n^{‐\frac 12}m^{\frac 1q}k^{\frac{1}{\sigma}};
$$ 
we use Lemma \ref{lemma01}. If $\frac{\nu_1}{\nu_2} \ge m^{\frac{1}{p_1} ‐\frac{1}{p_2}}$, then
$$
\min _{1\le j\le 3} \Phi(m,\, k, \, n) \underset{q,\sigma}{\asymp} \nu_2 n^{‐\frac 12}m^{\frac 1q-\frac{1}{p_2}+\frac 12}k^{\frac{1}{\sigma}};
$$ 
we use Lemma \ref{lemma07}. If $1\le \frac{\nu_1}{\nu_2} \le m^{\frac{1}{p_1} ‐\frac{1}{p_2}}$, then
$$
\min _{1\le j\le 3} \Phi(m,\, k, \, n) \underset{q,\sigma}{\asymp} \nu_1^{1‐\tilde \lambda}\nu_2^{\tilde \lambda} n^{‐\frac 12}m^{\frac 1q}k^{\frac{1}{\sigma}};
$$ 
we use Lemma \ref{lemma10}.

\vskip 0.2cm

\textbf{6b. Case $p_1, \, \theta_1, \, p_2\in [1, \, 2]$, $ \theta_2\in [2, \, \sigma]$} is similar.

\vskip 0.2cm

\textbf{7. Case $2<q<\infty$, $2<\sigma < \infty$, $ p_1\in [2,\,  q]$, $\theta_1\in[2, \, \sigma]$, $p_2, \, \theta_2 \in [1, \, 2]$; here one of the following conditions holds: a) $\lambda_{p_1,q}\le\lambda_{\theta_1,\sigma}$, $\tilde \mu > \tilde \lambda$, 
b) $\lambda_{p_1,q}\ge\lambda_{\theta_1,\sigma}$, $\tilde \mu < \tilde \lambda$.}

We consider the case a); the case b) is similar.

First we notice that from $\tilde \mu \ge \tilde \lambda$ it follows that $\tilde \theta\ge 2$ and $\tilde p\le 2$. Indeed, $\frac{1}{\tilde p}‐\frac 12 = (\tilde \mu‐\tilde \lambda)\left(\frac{1}{p_2}‐\frac{1}{p_1}\right)\ge 0$, $\frac 12‐\frac{1}{\tilde \theta} = (\tilde \mu‐\tilde \lambda)\left(\frac{1}{\theta_2}‐\frac{1}{\theta_1}\right)\ge 0$. From $\lambda_{p_1,q}\le \lambda_{\theta_1,\sigma}$ and $\lambda_{2,q}\ge \lambda_{\tilde \theta,\sigma}$ it follows that $\lambda$ is well‐defined and $\lambda\in [0, \, \tilde \lambda]$. In addition, 
\begin{align}
\label{qwe}
\frac{1/p_1‐1/p_2}{1/2‐1/q}\le \frac{1/\theta_1‐1/\theta_2}{1/2‐1/\sigma}. 
\end{align}
Indeed, from $\lambda _{p_1,q}\le \lambda_{\theta_1,\sigma}$ we have $\frac{1/2‐1/p_1}{1/2‐1/q}\ge \frac{1/2‐1/\theta_1}{1/2‐1/\sigma}$, or $\frac{\tilde \lambda (1/p_2‐1/p_1)}{1/2‐1/q} \ge \frac{\tilde \mu (1/\theta_2‐1/\theta_1)}{1/2‐1/\sigma}$. Taking into account that $\tilde \mu> \tilde \lambda \ge 0$, $p_2\le p_1$, $\theta_2\le \theta_1$, we get the desired inequality.

{\bf Subcase $m^{2/q}k^{2/\sigma} \le n \le \min \{mk^{2/\sigma}, \, km^{2/q}\}$.} 

Notice that
$$
(n^{\frac 12}m^{‐\frac 1q}k^{‐\frac{1}{\sigma}})^{\frac{1/p_1‐1/p_2}{1/2‐1/q}} \stackrel{(\ref{qwe})}{\le} (n^{\frac 12}m^{‐\frac 1q}k^{‐\frac{1}{\sigma}})^{\frac{1/\theta_1‐1/\theta_2}{1/2‐1/\sigma}} \le 1.
$$

We apply (\ref{dn_1})‐‐(\ref{dn_3}), compare the right‐hand sides of these order inequalities, and take into account (\ref{1234}) and the inequalities $0\le \lambda \le \tilde \lambda \le \tilde \mu \le 1$.

If $\frac{\nu_1}{\nu_2} \le (n^{\frac 12}m^{‐\frac 1q}k^{‐\frac{1}{\sigma}})^{\frac{1/p_1‐1/p_2}{1/2‐1/q}}$, then 
$$
\min _{1\le j\le 5} \Phi_j(m,\, k, \, n) \underset{q,\sigma}{\asymp} \nu_1(n^{‐\frac 12}m^{\frac 1q}k^{\frac{1}{\sigma}}) ^{\frac{1/p_1‐1/q}{1/2‐1/q}};
$$
we use Lemma \ref{lemma02}.
If $\frac{\nu_1}{\nu_2} \ge 1$, then
$$
\min _{1\le j\le 5} \Phi_j(m,\, k, \, n) \underset{q,\sigma}{\asymp} \nu_2 n^{‐\frac 12}m^{\frac 1q}k^{\frac{1}{\sigma}};
$$
we use Lemma \ref{lemma01}.
If
$$
(n^{\frac 12}m^{‐\frac 1q}k^{‐\frac{1}{\sigma}})^{\frac{1/\theta_1‐1/\theta_2}{1/2‐1/\sigma}} \le \frac{\nu_1}{\nu_2} \le 1,
$$
then
$$
\min _{1\le j\le 5} \Phi_j(m,\, k, \, n) \underset{q,\sigma}{\asymp} \nu_1^{1‐\tilde\mu}\nu_2^{\tilde\mu} n^{‐\frac 12}m^{\frac 1q}k^{\frac{1}{\sigma}};
$$
we apply Lemma \ref{lemma09}. If
$$
(n^{\frac 12}m^{‐\frac 1q}k^{‐\frac{1}{\sigma}})^{\frac{1/p_1‐1/p_2}{1/2‐1/q}} \le \frac{\nu_1}{\nu_2} \le (n^{\frac 12}m^{‐\frac 1q}k^{‐\frac{1}{\sigma}})^{\frac{1/\theta_1‐1/\theta_2}{1/2‐1/\sigma}},
$$
then
$$
\min _{1\le j\le 5} \Phi_j(m,\, k, \, n) \underset{q,\sigma}{\asymp} \nu_1^{1‐\lambda}\nu_2^{\lambda} (n^{‐\frac 12}m^{\frac 1q}k^{\frac{1}{\sigma}})^{\frac{1/p‐1/q}{1/2‐1/q}} = \nu_1^{1‐\lambda}\nu_2^{\lambda} (n^{‐\frac 12}m^{\frac 1q}k^{\frac{1}{\sigma}})^{\frac{1/\theta‐1/\sigma}{1/2‐1/\sigma}};
$$
we use Lemma \ref{lemma04}.

{\bf Subcase $mk^{2/\sigma}\le n\le m^{2/q}k$.} Notice that
$$
m^{\frac{1}{p_1}‐\frac{1}{p_2}} (n^{\frac 12}m^{‐\frac 12} k^{‐\frac{1}{\sigma}}) ^{\frac{1/\theta_1‐1/\theta_2}{1/2‐1/\sigma}} \stackrel{(\ref{qwe})}{\le} (n^{\frac 12} m^{‐\frac 1q} k^{‐\frac{1}{\sigma}}) ^{\frac{1/\theta_1‐1/\theta_2}{1/2‐1/\sigma}} \le 1.
$$

If $\frac{\nu_1}{\nu_2}\ge 1$, then 
$$
\min _{1\le j\le 5} \Phi_j(m,\, k, \, n) \underset{q,\sigma}{\asymp} \nu_2 n^{‐\frac 12} m^{\frac 1q} k^{\frac{1}{\sigma}};
$$
we use Lemma \ref{lemma01}. If $$\frac{\nu_1}{\nu_2} \le m^{\frac{1}{p_1}‐\frac{1}{p_2}} (n^{\frac 12}m^{‐\frac 12} k^{‐\frac{1}{\sigma}}) ^{\frac{1/\theta_1‐1/\theta_2}{1/2‐1/\sigma}},$$ then 
$$
\min _{1\le j\le 5} \Phi_j(m,\, k, \, n) \underset{q,\sigma}{\asymp} \nu_1m^{\frac 1q‐\frac{1}{p_1}} (n^{‐\frac 12}m^{\frac 12} k^{\frac{1}{\sigma}}) ^{\frac{1/\theta_1‐1/\sigma}{1/2‐1/\sigma}};
$$ 
we apply Lemma \ref{lemma03}. If $(n^{\frac 12} m^{‐\frac 1q} k^{‐\frac{1}{\sigma}}) ^{\frac{1/\theta_1‐1/\theta_2}{1/2‐1/\sigma}}\le \frac{\nu_1}{\nu_2} \le 1$, then
$$
\min _{1\le j\le 5} \Phi_j(m,\, k, \, n) \underset{q,\sigma}{\asymp} \nu_1^{1‐\tilde\mu}\nu_2^{\tilde\mu} n^{‐\frac 12} m^{\frac 1q} k^{\frac{1}{\sigma}};
$$ 
we use Lemma \ref{lemma09}.
If
$$
m^{\frac{1}{p_1}‐\frac{1}{p_2}} (n^{\frac 12}m^{‐\frac 12} k^{‐\frac{1}{\sigma}}) ^{\frac{1/\theta_1‐1/\theta_2}{1/2‐1/\sigma}} \le \frac{\nu_1}{\nu_2} \le (n^{\frac 12} m^{‐\frac 1q} k^{‐\frac{1}{\sigma}}) ^{\frac{1/\theta_1‐1/\theta_2}{1/2‐1/\sigma}},
$$
then
$$
\min _{1\le j\le 5} \Phi_j(m,\, k, \, n) \underset{q,\sigma}{\asymp} \nu_1^{1‐\lambda}\nu_2^{\lambda} m^{\frac 1q‐\frac 1p} (n^{‐\frac 12} m^{\frac 12} k^{\frac{1}{\sigma}}) ^{\frac{1/\theta‐1/\sigma}{1/2‐1/\sigma}};
$$
we apply Lemma \ref{lemma05}.

{\bf Subcase $km^{2/q}\le n\le mk^{2/\sigma}$.} 
Notice that
$$
(n^{\frac 12} m^{‐\frac 1q} k^{‐\frac{1}{\sigma}}) ^{\frac{1/p_1‐1/p_2}{1/2‐1/q}} \stackrel{(\ref{qwe})}{\le} (n^{\frac 12} m^{‐\frac 1q} k^{‐\frac{1}{2}})^{\frac{1/p_1‐1/p_2}{1/2‐1/q}} k^{\frac{1}{\theta_1} ‐\frac{1}{\theta_2}} \le k^{\frac{1}{\theta_1} ‐\frac{1}{\theta_2}} \le 1.
$$

If $\frac{\nu_1}{\nu_2} \le (n^{\frac 12} m^{‐\frac 1q} k^{‐\frac{1}{\sigma}}) ^{\frac{1/p_1‐1/p_2}{1/2‐1/q}}$, then
$$
\min _{1\le j\le 5} \Phi_j(m,\, k, \, n) \underset{q,\sigma}{\asymp} \nu_1(n^{‐\frac 12} m^{\frac 1q} k^{\frac{1}{\sigma}}) ^{\frac{1/p_1‐1/q}{1/2‐1/q}};
$$
we apply Lemma \ref{lemma02}. If $\frac{\nu_1}{\nu_2} \ge 1$, then
$$
\min _{1\le j\le 5} \Phi_j(m,\, k, \, n) \underset{q,\sigma}{\asymp} \nu_2n^{‐\frac 12} m^{\frac 1q} k^{\frac{1}{\sigma}};
$$
we use Lemma \ref{lemma01}.
If $k^{\frac{1}{\theta_1} ‐\frac{1}{\theta_2}} \le \frac{\nu_1}{\nu_2}\le 1$, then
$$
\min _{1\le j\le 5} \Phi_j(m,\, k, \, n) \underset{q,\sigma}{\asymp} \nu_1^{1‐\tilde\mu}\nu_2^{\tilde\mu} n^{‐\frac 12} m^{\frac 1q} k^{\frac{1}{\sigma}};
$$ 
we use Lemma \ref{lemma10}. If
$$
(n^{\frac 12} m^{‐\frac 1q} k^{‐\frac{1}{2}})^{\frac{1/p_1‐1/p_2}{1/2‐1/q}} k^{\frac{1}{\theta_1} ‐\frac{1}{\theta_2}} \le \frac{\nu_1}{\nu_2} \le k^{\frac{1}{\theta_1} ‐\frac{1}{\theta_2}},
$$
then
$$
\min _{1\le j\le 5} \Phi_j(m,\, k, \, n) \underset{q,\sigma}{\asymp}\nu_1^{1‐\tilde\lambda}\nu_2^{\tilde\lambda} k^{\frac{1}{\sigma} ‐\frac{1}{\tilde\theta}}n^{‐\frac 12} m^{\frac 1q} k^{\frac{1}{2}};
$$
we use Lemma \ref{lemma08}. If $$(n^{\frac 12} m^{‐\frac 1q} k^{‐\frac{1}{\sigma}}) ^{\frac{1/p_1‐1/p_2}{1/2‐1/q}} \le \frac{\nu_1}{\nu_2}\le (n^{\frac 12} m^{‐\frac 1q} k^{‐\frac{1}{2}})^{\frac{1/p_1‐1/p_2}{1/2‐1/q}} k^{\frac{1}{\theta_1} ‐\frac{1}{\theta_2}},$$ then
$$
\min _{1\le j\le 5} \Phi_j(m,\, k, \, n) \underset{q,\sigma}{\asymp} \nu_1^{1‐\lambda}\nu_2^\lambda(n^{‐\frac 12} m^{\frac 1q} k^{\frac{1}{\sigma}}) ^{\frac{1/p‐1/q}{1/2‐1/q}};
$$
we apply Lemma \ref{lemma05}.

{\bf Subcase $n\ge \max\{mk^{2/\sigma}, \, m^{2/q}k\}$.} 
Notice that
$$
m^{\frac{1}{p_1}‐\frac{1}{p_2}}(n^{\frac 12} m^{‐\frac 12} k^{‐\frac{1}{\sigma}}) ^{\frac{1/\theta_1‐1/\theta_2}{1/2‐1/\sigma}} \stackrel{(\ref{qwe})}{\le} (n^{\frac 12} m^{‐\frac 1q} k^{‐\frac{1}{2}}) ^{\frac{1/p_1‐1/p_2}{1/2‐1/q}} k^{\frac{1}{\theta_1} ‐\frac{1}{\theta_2}} \le k^{\frac{1}{\theta_1} ‐\frac{1}{\theta_2}} \le 1.
$$

If $\frac{\nu_1}{\nu_2} \ge 1$, we have
$$
\min _{1\le j\le 5} \Phi_j(m,\, k, \, n) \underset{q,\sigma}{\asymp} \nu_2 n^{‐\frac 12} m^{\frac 1q} k^{\frac{1}{\sigma}};
$$
 we apply Lemma \ref{lemma01}. If $\frac{\nu_1}{\nu_2}\le m^{\frac{1}{p_1}‐\frac{1}{p_2}}(n^{\frac 12} m^{‐\frac 12} k^{‐\frac{1}{\sigma}}) ^{\frac{1/\theta_1‐1/\theta_2}{1/2‐1/\sigma}}$, then
 $$
 \min _{1\le j\le 5} \Phi_j(m,\, k, \, n) \underset{q,\sigma}{\asymp} \nu_1m^{\frac 1q‐\frac{1}{p_1}} (n^{‐\frac 12}m^{\frac 12} k^{\frac{1}{\sigma}}) ^{\frac{1/\theta_1‐1/\sigma}{1/2‐1/\sigma}};
 $$
we use Lemma \ref{lemma03}. If $k^{\frac{1}{\theta_1} ‐\frac{1}{\theta_2}} \le \frac{\nu_1}{\nu_2} \le 1$, then 
 $$
 \min _{1\le j\le 5} \Phi_j(m,\, k, \, n) \underset{q,\sigma}{\asymp}  \nu_1^{1‐\tilde\mu}\nu_2^{\tilde\mu} n^{‐\frac 12} m^{\frac 1q} k^{\frac{1}{\sigma}};
 $$
 we use Lemma \ref{lemma10}. If $(n^{\frac 12} m^{‐\frac 1q} k^{‐\frac{1}{2}}) ^{\frac{1/p_1‐1/p_2}{1/2‐1/q}} k^{\frac{1}{\theta_1} ‐\frac{1}{\theta_2}} \le \frac{\nu_1}{\nu_2} \le k^{\frac{1}{\theta_1} ‐\frac{1}{\theta_2}}$, then
 $$
 \min _{1\le j\le 5} \Phi_j(m,\, k, \, n) \underset{q,\sigma}{\asymp} \nu_1^{1‐\tilde\lambda}\nu_2^{\tilde\lambda} k^{\frac{1}{\sigma} ‐\frac{1}{\tilde\theta}}n^{‐\frac 12} m^{\frac 1q} k^{\frac{1}{2}};
 $$
 we use Lemma \ref{lemma08}. If
$$
m^{\frac{1}{p_1}‐\frac{1}{p_2}}(n^{\frac 12} m^{‐\frac 12} k^{‐\frac{1}{\sigma}}) ^{\frac{1/\theta_1‐1/\theta_2}{1/2‐1/\sigma}} \le \frac{\nu_1}{\nu_2} \le(n^{\frac 12} m^{‐\frac 1q} k^{‐\frac{1}{2}}) ^{\frac{1/p_1‐1/p_2}{1/2‐1/q}} k^{\frac{1}{\theta_1} ‐\frac{1}{\theta_2}}, 
$$
then
$$
\min _{1\le j\le 5} \Phi_j(m,\, k, \, n) \underset{q,\sigma}{\asymp} \nu_1^{1‐\lambda}\nu_2^{\lambda} m^{\frac 1q‐\frac 1p} (n^{‐\frac 12} m^{\frac 12} k^{\frac{1}{\sigma}}) ^{\frac{1/\theta‐1/\sigma}{1/2‐1/\sigma}};
$$
we apply Lemma \ref{lemma06}.

\vskip 0.2cm

\textbf{8. Case $q>2$, $\sigma>2$, $p_1\in [2, \,  q]$, $\theta_1\in [2, \,  \sigma]$, $p_2, \, \theta_2 \in [1, \,  2]$; we suppose that one of the following conditions holds: a) $\lambda_{p_1,q}\le \lambda_{\theta_1,\sigma}$, $\tilde \mu \le \tilde \lambda$,
b) $\lambda_{p_1,q}\ge \lambda_{\theta_1,\sigma}$, $\tilde \mu \ge \tilde \lambda$.}

Let condition a) hold (case b) is similar).

Since $\tilde \mu\le \tilde \lambda$, we have $\tilde p\ge 2$, $\tilde \theta\le 2$.

We prove that $d_n(\nu_1B_{p_1,\theta_1}^{m,k} \cap \nu_2B_{p_2,\theta_2}^{m,k}, \, l^{m,k}_{q,\sigma})\underset{q,\sigma}{\asymp} \min _{1\le j\le 4} \Phi_j(m, \, k, \, n)$.

{\bf Subcase $m^{2/q}k^{2/\sigma} \le n \le mk^{2/\sigma}$.} Notice that $(n^{\frac 12} m^{‐\frac 1q} k^{‐\frac{1}{\sigma}}) ^{\frac{1/p_1‐1/p_2}{1/2‐1/q}}\le 1$.

If $\frac{\nu_1}{\nu_2} \le (n^{\frac 12} m^{‐\frac 1q} k^{‐\frac{1}{\sigma}}) ^{\frac{1/p_1‐1/p_2}{1/2‐1/q}}$, then
$$
\min _{1\le j\le 4} \Phi_j(m, \, k, \, n) \underset{q,\sigma}{\asymp} \nu_1(n^{‐\frac 12} m^{\frac 1q} k^{\frac{1}{\sigma}}) ^{\frac{1/p_1‐1/q}{1/2‐1/q}};
$$
we use Lemma \ref{lemma02}.
If $\frac{\nu_1}{\nu_2}\ge 1$, then
$$
\min _{1\le j\le 4} \Phi_j(m, \, k, \, n) \underset{q,\sigma}{\asymp} \nu_2n^{‐\frac 12} m^{\frac 1q} k^{\frac{1}{\sigma}};
$$
we apply Lemma \ref{lemma01}.
If $(n^{\frac 12} m^{‐\frac 1q} k^{‐\frac{1}{\sigma}}) ^{\frac{1/p_1‐1/p_2}{1/2‐1/q}} \le \frac{\nu_1}{\nu_2}\le 1$, then 
$$
\min _{1\le j\le 4} \Phi_j(m, \, k, \, n) \underset{q,\sigma}{\asymp} \nu_1^{1‐\tilde \lambda} \nu_2^{\tilde \lambda}n^{‐\frac 12} m^{\frac 1q} k^{\frac{1}{\sigma}};
$$
we use Lemma \ref{lemma09}.

{\bf Subcase $n\ge mk^{2/\sigma}$.} Notice that
$$
m^{\frac{1}{p_1}‐\frac{1}{p_2}} (n^{\frac 12} m^{‐\frac 12} k^{‐\frac{1}{\sigma}}) ^{\frac{1/\theta_1‐1/\theta_2}{1/2‐1/\sigma}} \le m^{\frac{1}{p_1}‐\frac{1}{p_2}}\le 1.
$$

If $\frac{\nu_1}{\nu_2}\ge 1$, then
$$
\min _{1\le j\le 4} \Phi_j(m, \, k, \, n) \underset{q,\sigma}{\asymp} \nu_2n^{‐\frac 12} m^{\frac 1q} k^{\frac{1}{\sigma}};
$$
we use Lemma \ref{lemma01}. If $\frac{\nu_1}{\nu_2} \le m^{\frac{1}{p_1}‐\frac{1}{p_2}} (n^{\frac 12} m^{‐\frac 12} k^{‐\frac{1}{\sigma}}) ^{\frac{1/\theta_1‐1/\theta_2}{1/2‐1/\sigma}}$, then
$$
\min _{1\le j\le 4} \Phi_j(m, \, k, \, n) \underset{q,\sigma}{\asymp} \nu_1m^{\frac 1q‐\frac{1}{p_1}}(n^{‐\frac 12} m^{\frac 12} k^{\frac{1}{\sigma}}) ^{\frac{1/\theta_1‐1/\sigma}{1/2‐1/\sigma}};
$$ 
we use Lemma \ref{lemma03}. If
$$
m^{\frac{1}{p_1}‐\frac{1}{p_2}} (n^{\frac 12} m^{‐\frac 12} k^{‐\frac{1}{\sigma}}) ^{\frac{1/\theta_1‐1/\theta_2}{1/2‐1/\sigma}} \le \frac{\nu_1}{\nu_2} \le m^{\frac{1}{p_1}‐\frac{1}{p_2}},
$$
then
$$
\min _{1\le j\le 4} \Phi_j(m, \, k, \, n) \underset{q,\sigma}{\asymp} \nu_1^{1‐\tilde \mu} \nu_2^{\tilde \mu} m^{\frac 1q ‐\frac{1}{\tilde p}}n^{‐\frac 12} m^{\frac 12} k^{\frac{1}{\sigma}};
$$
we use Lemma \ref{lemma08}. If $m^{\frac{1}{p_1}‐\frac{1}{p_2}} \le \frac{\nu_1}{\nu_2} \le 1$, then
 $$
 \min _{1\le j\le 4} \Phi_j(m, \, k, \, n) \underset{q,\sigma}{\asymp} \nu_1^{1‐\tilde \lambda} \nu_2^{\tilde \lambda}n^{‐\frac 12} m^{\frac 1q} k^{\frac{1}{\sigma}};
 $$
we use Lemma \ref{lemma10}.

\textbf{9a. Case $p_1, \, p_2\in [2, \, q]$, $\theta_1, \, \theta_2\in [1, \,  2]$.}

We claim that $d_n(\nu_1B_{p_1,\theta_1}^{m,k} \cap \nu_2B_{p_2,\theta_2}^{m,k}, \, l^{m,k}_{q,\sigma})\underset{q,\sigma}{\asymp} \min _{j=1, 2} \Phi_j(m, \, k, \, n)$.

If $m^{\frac 2q}k^{\frac{2}{\sigma}}\le n\le mk^{\frac{2}{\sigma}}$, then
$$
d_n(\nu_iB_{p_i,\theta_i}^{m,k}, \, l_{q,\sigma}^{m,k}) \stackrel{(\ref{dn_2})}{\underset{q,\sigma}{\asymp}} \nu_i(n^{\frac 12}m^{‐\frac 1q} k^{‐\frac{1}{\sigma}})^{\frac{1/p_i‐1/q}{1/2‐1/q}};
$$
$$
\nu_1 (n^{\frac 12}m^{‐\frac 1q} k^{‐\frac{1}{\sigma}})^{\frac{1/p_1‐1/q}{1/2‐1/q}} \le \nu_2 (n^{\frac 12}m^{‐\frac 1q} k^{‐\frac{1}{\sigma}})^{\frac{1/p_2‐1/q}{1/2‐1/q}}\; \Leftrightarrow \; \frac{\nu_1}{\nu_2} \le (n^{\frac 12}m^{‐\frac 1q} k^{‐\frac{1}{\sigma}})^{\frac{1/p_1‐1/p_2}{1/2‐1/q}}.
$$
Now we apply Lemma \ref{lemma02}.

Let $n\ge mk^{\frac{2}{\sigma}}$. Then
$$
d_n(\nu_iB_{p_i,\theta_i}^{m,k}, \, l_{q,\sigma}^{m,k}) \stackrel{(\ref{dn_2})}{\underset{q,\sigma}{\asymp}} \nu_i m^{\frac 1q‐\frac{1}{p_i}} n^{‐\frac 12} m^{\frac 12}k^{‐\frac{1}{\sigma}}.
$$
We have
$$
\nu_1 m^{\frac 1q‐\frac{1}{p_1}} n^{‐\frac 12} m^{\frac 12}k^{‐\frac{1}{\sigma}} \le \nu_2 m^{\frac 1q‐\frac{1}{p_2}} n^{‐\frac 12} m^{\frac 12}k^{‐\frac{1}{\sigma}} \; \Leftrightarrow \; \frac{\nu_1}{\nu_2} \le m^{\frac{1}{p_1}‐\frac{1}{p_2}}.
$$
It remains to apply Lemma \ref{lemma07}.

\vskip 0.2cm

\textbf{9b. Case $p_1, \, p_2\in [1, \, 2]$, $\theta_1, \, \theta_2\in [2, \, \sigma]$} is similar.

\vskip 0.2cm

\textbf{10. Case $q>2$, $\sigma>2$, $p_1\in [2, \, q]$, $\theta_1\in [1, \, 2]$, $p_2\in [1, \, 2]$, $\theta_2 \in [2, \, \sigma]$, $\tilde \lambda \ge \tilde \mu$.}

Since $\tilde \lambda \ge \tilde \mu$, we have $\tilde p\ge 2$, $\tilde \theta\ge 2$. It follows from the equations $\frac 12 ‐\frac{1}{\tilde p}= (\tilde \lambda ‐\tilde \mu)\left(\frac{1}{p_2} ‐\frac{1}{p_1}\right)$, $\frac 12 ‐ \frac{1}{\tilde \theta} = (\tilde \lambda ‐\tilde \mu)\left(\frac{1}{\theta_1} ‐\frac{1}{\theta_2}\right)$. In addition, $\Phi_5(m, \, k, \, n)<\infty$ and $\lambda \in [\tilde \mu, \, \tilde \lambda]$, since $\frac{1/\tilde p‐1/q}{1/2‐1/q} \le 1=\frac{1/2‐1/\sigma}{1/2‐1/\sigma}$, $\frac{1/2‐1/q}{1/2‐1/q}=1\ge \frac{1/\tilde \theta‐1/\sigma}{1/2‐1/\sigma}$.

{\bf Subcase $m^{\frac 2q}k^{\frac{2}{\sigma}} \le n\le \min \{mk^{\frac{2}{\sigma}}, \, m^{\frac 2q}k\}$.} 

Notice that 
$$
(n^{\frac 12} m^{‐\frac 1q} k^{‐\frac{1}{\sigma}})^{\frac{1/p_1‐1/p_2}{1/2‐1/q}} \le (n^{\frac 12} m^{‐\frac 1q} k^{‐\frac{1}{\sigma}})^{\frac{1/\theta_1‐1/\theta_2}{1/2‐1/\sigma}}.
$$

 If $\frac{\nu_1}{\nu_2} \le (n^{\frac 12} m^{‐\frac 1q} k^{‐\frac{1}{\sigma}})^{\frac{1/p_1‐1/p_2}{1/2‐1/q}}$ 
or
$\frac{\nu_1}{\nu_2} \ge (n^{\frac 12} m^{‐\frac 1q} k^{‐\frac{1}{\sigma}})^{\frac{1/\theta_1‐1/\theta_2}{1/2‐1/\sigma}}$, we have, respectively, 
$$
\min _{1\le j\le 5} \Phi_j(m, \, k, \, n) \underset{q,\sigma}{\asymp} \nu_1 (n^{‐\frac 12} m^{\frac 1q} k^{\frac{1}{\sigma}}) ^{\frac{1/p_1‐1/q}{1/2‐1/q}},
$$
$$
\min _{1\le j\le 5} \Phi_j(m, \, k, \, n) \underset{q,\sigma}{\asymp} \nu_2 (n^{‐\frac 12} m^{\frac 1q} k^{\frac{1}{\sigma}}) ^{\frac{1/\theta_2‐1/\sigma}{1/2‐1/\sigma}};
$$
now we use Lemma \ref{lemma02}. If
$$
(n^{\frac 12} m^{‐\frac 1q} k^{‐\frac{1}{\sigma}})^{\frac{1/p_1‐1/p_2}{1/2‐1/q}} \le \frac{\nu_1}{\nu_2} \le (n^{\frac 12} m^{‐\frac 1q} k^{‐\frac{1}{\sigma}})^{\frac{1/\theta_1‐1/\theta_2}{1/2‐1/\sigma}},
$$
then
$$
\min _{1\le j\le 5} \Phi_j(m, \, k, \, n) \underset{q,\sigma}{\asymp} \nu_1^{1‐\lambda}\nu_2^{\lambda}(n^{‐\frac 12} m^{\frac 1q} k^{\frac{1}{\sigma}}) ^{\frac{1/p‐1/q}{1/2‐1/q}};
$$
we use Lemma \ref{lemma04}.

{\bf Subcase $mk^{2/\sigma}\le n \le m^{2/q}k$.} 

Notice that $$m^{\frac{1}{p_1} ‐\frac{1}{p_2}}\le m^{\frac{1}{p_1} ‐\frac{1}{p_2}} (n^{\frac 12} m^{‐\frac 12} k^{‐\frac{1}{\sigma}}) ^{\frac{1/\theta_1‐1/\theta_2}{1/2‐1/\sigma}} \le (n^{\frac 12} m^{‐\frac 1q} k^{‐\frac{1}{\sigma}})^{\frac{1/\theta_1‐1/\theta_2}{1/2‐1/\sigma}}.$$

If $\frac{\nu_1}{\nu_2} \le m^{\frac{1}{p_1} ‐\frac{1}{p_2}}$, then 
$$
\min _{1\le j\le 5} \Phi_j(m, \, k, \, n) \underset{q,\sigma}{\asymp} \nu_1 m^{\frac 1q‐\frac{1}{p_1}}n^{‐\frac 12} m^{\frac 12} k^{\frac{1}{\sigma}};
$$
we use Lemma \ref{lemma07}. If $\frac{\nu_1}{\nu_2} \ge (n^{\frac 12} m^{‐\frac 1q} k^{‐\frac{1}{\sigma}})^{\frac{1/\theta_1‐1/\theta_2}{1/2‐1/\sigma}}$, then
$$
\min _{1\le j\le 5} \Phi_j(m, \, k, \, n) \underset{q,\sigma}{\asymp} \nu_2 (n^{‐\frac 12} m^{\frac 1q} k^{\frac{1}{\sigma}}) ^{\frac{1/\theta_2‐1/\sigma}{1/2‐1/\sigma}};
$$ 
we use Lemma \ref{lemma02}. If $$m^{\frac{1}{p_1} ‐\frac{1}{p_2}}\le \frac{\nu_1}{\nu_2} \le m^{\frac{1}{p_1} ‐\frac{1}{p_2}} (n^{\frac 12} m^{‐\frac 12} k^{‐\frac{1}{\sigma}}) ^{\frac{1/\theta_1‐1/\theta_2}{1/2‐1/\sigma}},$$
then
$$
\min _{1\le j\le 5} \Phi_j(m, \, k, \, n) \underset{q,\sigma}{\asymp} \nu_1^{1‐\tilde \mu}\nu_2^{\tilde \mu}m^{\frac 1q‐\frac{1}{\tilde p}}n^{‐\frac 12} m^{\frac 12} k^{\frac{1}{\sigma}};
$$
we use Lemma \ref{lemma08}. If
$$m^{\frac{1}{p_1} ‐\frac{1}{p_2}} (n^{\frac 12} m^{‐\frac 12} k^{‐\frac{1}{\sigma}}) ^{\frac{1/\theta_1‐1/\theta_2}{1/2‐1/\sigma}} \le \frac{\nu_1}{\nu_2} \le (n^{\frac 12} m^{‐\frac 1q} k^{‐\frac{1}{\sigma}}) ^{\frac{1/\theta_1‐1/\theta_2}{1/2‐1/\sigma}},$$
then
$$
\min _{1\le j\le 5} \Phi_j(m, \, k, \, n) \underset{q,\sigma}{\asymp} \nu_1^{1‐\lambda}\nu_2^{\lambda}m^{\frac 1q‐\frac 1p}(n^{‐\frac 12} m^{\frac 12} k^{\frac{1}{\sigma}}) ^{\frac{1/\theta‐1/\sigma}{1/2‐1/\sigma}};
$$
we apply Lemma \ref{lemma05} and (\ref{1234}).

{\bf Subcase $m^{2/q}k\le n \le mk^{2/\sigma}$} is similar.

{\bf Subcase $\max\{m^{2/q}k, \, mk^{2/\sigma}\} \le n \le \frac{mk}{2}$.} Notice that $$m^{\frac{1}{p_1} ‐\frac{1}{p_2}} \le m^{\frac{1}{p_1} ‐\frac{1}{p_2}} (n^{\frac 12} m^{‐\frac 12} k^{‐\frac{1}{\sigma}}) ^{\frac{1/\theta_1‐1/\theta_2}{1/2‐1/\sigma}} \le (n^{\frac 12} m^{‐\frac 1q} k^{‐\frac{1}{2}}) ^{\frac{1/p_1‐1/p_2}{1/2‐1/q}} k^{\frac{1}{\theta_1}‐\frac{1}{\theta_2}} \le k^{\frac{1}{\theta_1}‐\frac{1}{\theta_2}}.$$

 If $\frac{\nu_1}{\nu_2} \le m^{\frac{1}{p_1} ‐\frac{1}{p_2}}$ or $\frac{\nu_1}{\nu_2} \ge  k^{\frac{1}{\theta_1}‐\frac{1}{\theta_2}}$, we get, respectively,
 $$
 \min _{1\le j\le 5} \Phi_j(m, \, k, \, n) \underset{q,\sigma}{\asymp} \nu_1 m^{\frac 1q‐\frac{1}{p_1}}n^{‐\frac 12} m^{\frac 12} k^{\frac{1}{\sigma}},
 $$
 $$
 \min _{1\le j\le 5} \Phi_j(m, \, k, \, n) \underset{q,\sigma}{\asymp} \nu_2 k^{\frac{1}{\sigma}‐\frac{1}{\theta_2}}n^{‐\frac 12} m^{\frac 1q} k^{\frac{1}{2}};
 $$
then we use Lemma \ref{lemma07}.
If $$m^{\frac{1}{p_1} ‐\frac{1}{p_2}}\le \frac{\nu_1}{\nu_2} \le m^{\frac{1}{p_1} ‐\frac{1}{p_2}} (n^{\frac 12} m^{‐\frac 12} k^{‐\frac{1}{\sigma}}) ^{\frac{1/\theta_1‐1/\theta_2}{1/2‐1/\sigma}}$$ or $$(n^{\frac 12} m^{‐\frac 1q} k^{‐\frac{1}{2}}) ^{\frac{1/p_1‐1/p_2}{1/2‐1/q}} k^{\frac{1}{\theta_1}‐\frac{1}{\theta_2}}\le \frac{\nu_1}{\nu_2}\le k^{\frac{1}{\theta_1}‐\frac{1}{\theta_2}},$$ then
we get, respectively,
$$
\min _{1\le j\le 5} \Phi_j(m, \, k, \, n) \underset{q,\sigma}{\asymp} \nu_1^{1‐\tilde \mu}\nu_2^{\tilde \mu}m^{\frac 1q‐\frac{1}{\tilde p}}n^{‐\frac 12} m^{\frac 12} k^{\frac{1}{\sigma}},
$$
$$
\min _{1\le j\le 5} \Phi_j(m, \, k, \, n) \underset{q,\sigma}{\asymp} \nu_1^{1‐\tilde \lambda}\nu_2^{\tilde \lambda}k^{\frac{1}{\sigma}‐\frac{1}{\tilde\theta}}n^{‐\frac 12} m^{\frac 1q} k^{\frac{1}{2}};
$$
now we apply Lemma \ref{lemma08}.
If $$m^{\frac{1}{p_1} ‐\frac{1}{p_2}} (n^{\frac 12} m^{‐\frac 12} k^{‐\frac{1}{\sigma}}) ^{\frac{1/\theta_1‐1/\theta_2}{1/2‐1/\sigma}} \le \frac{\nu_1}{\nu_2} \le (n^{\frac 12} m^{‐\frac 1q} k^{‐\frac{1}{2}}) ^{\frac{1/p_1‐1/p_2}{1/2‐1/q}} k^{\frac{1}{\theta_1}‐\frac{1}{\theta_2}},$$
then
$$
\min _{1\le j\le 5} \Phi_j(m, \, k, \, n) \underset{q,\sigma}{\asymp} \nu_1^{1‐\lambda}\nu_2^{\lambda}m^{\frac 1q‐\frac 1p}(n^{‐\frac 12} m^{\frac 12} k^{\frac{1}{\sigma}}) ^{\frac{1/\theta‐1/\sigma}{1/2‐1/\sigma}};
$$
we use Lemma \ref{lemma06}.

\vskip 0.2cm

\textbf{11. Case $p_1\in [2, \,  q]$, $\theta_1\in [1, \, 2]$, $p_2\in [1, \,  2]$, $\theta_2 \in [2, \, \sigma]$, $\tilde \lambda \le \tilde \mu$.} We prove that
$$
d_n(\nu_1B^{m,k}_{p_1,\theta_1} \cap \nu_2B^{m,k}_{p_2,\theta_2}, \, l^{m,k}_{q,\sigma}) \underset{q,\sigma}{\gtrsim} \min _{1\le j\le 4} \Phi_j(m, \, k, \, n).
$$

Since $\tilde \lambda \le \tilde \mu$, we have $\tilde p\le 2$, $\tilde \theta\le 2$.

{\bf Subcase $m^{2/q}k^{2/\sigma} \le n\le \min \{mk^{2/\sigma}, \, m^{2/q}k\}$.} Notice that
$$
(n^{\frac 12} m^{‐\frac 1q} k^{‐\frac{1}{\sigma}})^{\frac{1/p_1‐1/p_2}{1/2‐1/q}} \le 1\le (n^{\frac 12} m^{‐\frac 1q} k^{‐\frac{1}{\sigma}})^{\frac{1/\theta_1‐1/\theta_2}{1/2‐1/\sigma}}.
$$

If $\frac{\nu_1}{\nu_2} \le (n^{\frac 12} m^{‐\frac 1q} k^{‐\frac{1}{\sigma}})^{\frac{1/p_1‐1/p_2}{1/2‐1/q}}$ or $\frac{\nu_1}{\nu_2} \ge (n^{\frac 12} m^{‐\frac 1q} k^{‐\frac{1}{\sigma}})^{\frac{1/\theta_1‐1/\theta_2}{1/2‐1/\sigma}}$, we have, respectively,
$$
\min _{1\le j\le 4} \Phi_j(m, \, k, \, n) \underset{q,\sigma}{\asymp} \nu_1(n^{‐\frac 12} m^{\frac 1q} k^{\frac{1}{\sigma}}) ^{\frac{1/p_1‐1/q}{1/2‐1/q}},
$$
$$
\min _{1\le j\le 4} \Phi_j(m, \, k, \, n) \underset{q,\sigma}{\asymp} \nu_2(n^{‐\frac 12} m^{\frac 1q} k^{\frac{1}{\sigma}}) ^{\frac{1/\theta_2‐1/\sigma}{1/2‐1/\sigma}};
$$
we use Lemma \ref{lemma02}.
If $(n^{\frac 12} m^{‐\frac 1q} k^{‐\frac{1}{\sigma}})^{\frac{1/p_1‐1/p_2}{1/2‐1/q}}\le \frac{\nu_1}{\nu_2} \le 1$ or $1\le \frac{\nu_1}{\nu_2} \le (n^{\frac 12} m^{‐\frac 1q} k^{‐\frac{1}{\sigma}})^{\frac{1/\theta_1‐1/\theta_2}{1/2‐1/\sigma}}$, we get, respectively, 
$$
\min _{1\le j\le 4} \Phi_j(m, \, k, \, n) \underset{q,\sigma}{\asymp} \nu_1^{1‐\tilde \lambda}\nu_2^{\tilde \lambda} n^{‐\frac 12} m^{\frac 1q} k^{\frac{1}{\sigma}},
$$
$$
\min _{1\le j\le 4} \Phi_j(m, \, k, \, n) \underset{q,\sigma}{\asymp} \nu_1^{1‐\tilde \mu}\nu_2^{\tilde \mu} n^{‐\frac 12} m^{\frac 1q} k^{\frac{1}{\sigma}};
$$
we apply Lemma \ref{lemma09}.

{\bf Subcase $mk^{2/\sigma}\le n\le m^{2/q}k$.} Notice that
$$
m^{1/p_1‐1/p_2} \le 1\le (n^{\frac 12} m^{‐\frac 1q} k^{‐\frac{1}{\sigma}})^{\frac{1/\theta_1‐1/\theta_2}{1/2‐1/\sigma}}.
$$

If $\frac{\nu_1}{\nu_2} \ge 1$, as in the previous subcase, we use Lemmas \ref{lemma02} and \ref{lemma09}. Let $\frac{\nu_1}{\nu_2} \le 1$. If $\frac{\nu_1}{\nu_2} \le m^{\frac{1}{p_1}‐\frac{1}{p_2}}$, we have
$$
\min _{1\le j\le 4} \Phi_j(m, \, k, \, n) \underset{q,\sigma}{\asymp} \nu_1m^{\frac 1q‐\frac{1}{p_1}}n^{‐\frac 12} m^{\frac 12} k^{\frac{1}{\sigma}};
$$ 
we use Lemma \ref{lemma07}.
If $m^{\frac{1}{p_1}‐\frac{1}{p_2}} \le \frac{\nu_1}{\nu_2} \le 1$, then 
$$
\min _{1\le j\le 4} \Phi_j(m, \, k, \, n) \underset{q,\sigma}{\asymp} \nu_1^{1‐\tilde \lambda}\nu_2^{\tilde \lambda} n^{‐\frac 12} m^{\frac 1q} k^{\frac{1}{\sigma}};
$$
we use Lemma \ref{lemma10}.

{\bf Subcases $m^{2/q}k\le n\le mk^{2/\sigma}$ and $\max\{m^{2/q}k, \, mk^{2/\sigma}\}\le n \le \frac{mk}{2}$} are similar.

\vskip 0.2cm

\begin{Biblio}
\bibitem{nvtp} V.M. Tikhomirov, {\it Some questions in approximation theory}, Izdat. Moskov. Univ., Moscow, 1976.

\bibitem{itogi_nt} V.M. Tikhomirov, ``Theory of approximations''. In: {\it Current problems in
mathematics. Fundamental directions.} vol. 14. ({\it Itogi Nauki i
Tekhniki}) (Akad. Nauk SSSR, Vsesoyuz. Inst. Nauchn. i Tekhn.
Inform., Moscow, 1987), pp. 103--260 [Encycl. Math. Sci. vol. 14,
1990, pp. 93--243].

\bibitem{kniga_pinkusa} A. Pinkus, {\it $n$-widths
in approximation theory.} Berlin: Springer, 1985.

\bibitem{pietsch1} A. Pietsch, ``$s$-numbers of operators in Banach space'', {\it Studia Math.},
{\bf 51} (1974), 201--223.

\bibitem{stesin} M.I. Stesin, ``Aleksandrov diameters of finite-dimensional sets
and of classes of smooth functions'', {\it Dokl. Akad. Nauk SSSR},
{\bf 220}:6 (1975), 1278--1281 [Soviet Math. Dokl.].

\bibitem{k_p_s} A.N. Kolmogorov, A. A. Petrov, Yu. M. Smirnov, ``A formula of Gauss in the theory of the method of least squares'', {\it Izvestiya Akad. Nauk SSSR. Ser. Mat.} {\bf 11} (1947), 561‐‐566 (in Russian).

\bibitem{stech_poper} S. B. Stechkin, ``On the best approximations of given classes of functions by arbitrary polynomials'', {\it Uspekhi Mat. Nauk, 9:1(59)} (1954) 133‐‐134 (in Russian).

\bibitem{gluskin1} E.D. Gluskin, ``On some finite-dimensional problems of the theory of diameters'', {\it Vestn. Leningr. Univ.}, {\bf 13}:3 (1981), 5--10 (in Russian).

\bibitem{bib_gluskin} E.D. Gluskin, ``Norms of random matrices and diameters
of finite-dimensional sets'', {\it Math. USSR-Sb.}, {\bf 48}:1
(1984), 173--182.

\bibitem{kashin_oct} B.S. Kashin, ``The diameters of octahedra'', {\it Usp. Mat. Nauk} {\bf 30}:4 (1975), 251‐‐252 (in Russian).

\bibitem{bib_kashin} B.S. Kashin, ``The widths of certain finite-dimensional
sets and classes of smooth functions'', {\it Math. USSR-Izv.},
{\bf 11}:2 (1977), 317--333.

\bibitem{garn_glus} A.Yu. Garnaev and E.D. Gluskin, ``On widths of the Euclidean ball'', {\it Dokl.Akad. Nauk SSSR}, {bf 277}:5 (1984), 1048--1052 [Sov. Math. Dokl. 30 (1984), 200--204]

\bibitem{galeev4} E.M. Galeev,  ``Widths of functional classes and finite‐dimensional sets'', {\it Vladikavkaz. Mat. Zh.}, {\bf 13}:2 (2011), 3‐‐14 (in Russian).

\bibitem{dir_ull} S. Dirksen, T. Ullrich, ``Gelfand numbers related to structured sparsity and Besov space embeddings with small mixed smoothness'', {\it J. Compl.}, {\bf 48} (2018), 69‐‐102.

\bibitem{vyb_06} J. Vyb\'{\i}ral, {\it Function spaces with dominating mixed smoothness}, Dissertationes Math. (Rozprawy Mat.) 436, 1‐‐73 (2006).

\bibitem{vas_besov} A. A. Vasil'eva, ``Kolmogorov and linear widths of the weighted Besov classes with singularity at the origin'', {\it J. Approx. Theory}, {\bf 167} (2013), 1‐‐41.

\bibitem{galeev2} E.M. Galeev,  ``Kolmogorov widths of classes of periodic functions of one and several variables'', {\it Math. USSR‐Izv.},  {\bf 36}:2 (1991),  435‐‐448.

\bibitem{galeev5} E.M. Galeev, ``Kolmogorov $n$‐width of some finite‐dimensional sets in a mixed measure'', {\it Math. Notes}, {\bf 58}:1 (1995),  774‐‐778.

\bibitem{izaak1} A. D. Izaak, ``Kolmogorov widths in finite-dimensional spaces with mixed norms'', {\it Math. Notes}, {\bf 55}:1 (1994), 30‐‐36.

\bibitem{izaak2} A. D. Izaak, ``Widths of H\"{o}lder‐‐Nikol'skij classes and finite-dimensional subsets in spaces with mixed norm'', {\it Math. Notes}, {\bf 59}:3 (1996), 328‐‐330.

\bibitem{mal_rjut} Yu. V. Malykhin, K. S. Ryutin, ``The Product of Octahedra is Badly Approximated in the $l_{2,1}$‐Metric'', {\it Math. Notes}, {\bf 101}:1 (2017), 94‐‐99.

\bibitem{ioffe_tikh} A. D. Ioffe, V. M. Tikhomirov, ``Duality of convex functions and extremum problems'', {\it Russian Math. Surveys}, {\bf 23}:6 (1968), 53‐‐124.

\bibitem{galeev6} E. M. Galeev, ``An estimate for the Kolmogorov widths of classes $H^r_p$ of periodic functions
of several variables of small smoothnes'', {\it Theory of functions and its applications. Proc. conf. young scientists}. 1986. P. 17--24 (in Russian).

\bibitem{galeev1} E.M.~Galeev, ``The Kolmogorov diameter of the intersection of classes of periodic
functions and of finite-dimensional sets'', {\it Math. Notes},
{\bf 29}:5 (1981), 382--388.

\bibitem{vas_ball_inters} A. A. Vasil'eva, ``Kolmogorov widths of intersections of finite‐dimensional balls'', {\it J. Compl.}, {\bf 72} (2022), article 101649.

\bibitem{vas_mz} A. A. Vasil'eva, ``Kolmogorov widths of an intersection of two finite‐dimensional balls in a mixed norm'', {\it Math. Notes}, {\bf 113}:4 (2023), to appear.
\end{Biblio}
\end{document}